\title{Spaces of convex $n$-partitions\footnote{The first author was funded by DFG through the \emph{Berlin Mathematical School}. Research by the second author was supported by the DFG Collaborative Research Center TRR~109 ``Discretization in Geometry and Dynamics''.}}
\author{Emerson León\\
Depto. de Matemáticas\\
Universidad de los Andes\\ 
Bogotá, Colombia\\
\url{emersonleon@gmail.com}
\and
G\"unter M.~Ziegler\\  
Inst.\ Mathematics, FU Berlin\\Arnimallee 2\\14195 Berlin, Germany\\
\url{ziegler@math.fu-berlin.de}}
\date{{\small November 8, 2015}}
\newcommand{\R}{{\ensuremath{\mathbb{R}}}}
\newcommand{\N}{{\ensuremath{\mathbb{N}}}}
\newcommand{\A}{{\ensuremath{\mathcal{A}}}}
\newcommand{\F}{{\ensuremath{\mathcal{F}}}}
\newcommand{\C}{{\ensuremath{\mathcal{C}}}}
\newcommand{\Q}{{\ensuremath{\mathcal{Q}}}}
\newcommand{\g}{{\ensuremath{\boldsymbol{g}}}}
\newcommand{\x}{{\ensuremath{\boldsymbol{x}}}}
\newcommand{\y}{{\ensuremath{\boldsymbol{y}}}}
\newcommand{\s}{{\ensuremath{\boldsymbol{s}}}}
\newcommand{\e}{{\ensuremath{\boldsymbol{e}}}}
\newcommand{\w}{{\ensuremath{\boldsymbol{v}}}}
\newcommand{\p}{{\ensuremath{\boldsymbol{p}}}}
\renewcommand{\v}{{\ensuremath{\boldsymbol{v}}}}
\renewcommand{\a}{{\ensuremath{\boldsymbol{a}}}}
\renewcommand{\b}{{\ensuremath{\boldsymbol{b}}}}
\newcommand{\HH}{{\ensuremath{\mathcal{H}}}}
\newcommand{\D}{{\ensuremath{\mathcal{D}}}}
\renewcommand{\A}{{\ensuremath{\mathcal{A}}}}
\newcommand{\I}{{\ensuremath{\mathcal{I}}}}
\let\oldc\c
\renewcommand{\c}{{\ensuremath{\boldsymbol{c}}}}
\renewcommand{\P}{{\ensuremath{\mathcal{P}}}}
\DeclareMathOperator{\sd}{sd}
\DeclareMathOperator{\vol}{vol}
\DeclareMathOperator{\cone}{cone}
\DeclareMathOperator{\relint}{relint}
\DeclareMathOperator{\reg}{reg}
\definecolor{tinto}{rgb}{0.5,0.1,0}
\newtheoremstyle{break}
   {\topsep}{\topsep}%
   {\itshape}{}%
   {\bfseries}{}%
   {\newline}
   {\thmname{#1}\thmnumber{\@ifnotempty{#1}{ }\@upn{#2}}%
    \thmnote{ {\bfseries(#3)}}}%
\theoremstyle{plain}
\newtheorem{theorem}{Theorem}[section]
\newtheorem{proposition}[theorem]{Proposition}
\newtheorem{conjecture}[theorem]{Conjecture}
\newtheorem{lemma}[theorem]{Lemma}
\theoremstyle{definition}
\newtheorem{definition}[theorem]{Definition}
\newtheorem{example}[theorem]{Example}
\begin{document}

\maketitle
 
\begin{abstract}
We construct and study the space $\C(\R^d,n)$ of all partitions of $\R^d$ into $n$ non-empty open convex regions ($n$-partitions).
A representation on the upper hemisphere of an $n$-sphere is used to obtain a metric and thus a topology on
this space. We show that the space of partitions into possibly empty regions  $\C(\R^d,\le n)$
yields a compactification with respect to this metric.
We also describe faces and face lattices, combinatorial types, and adjacency graphs for $n$-partitions, and use
these concepts to show that $\C(\R^d,n)$ is a union of elementary semialgebraic sets.  
\end{abstract}

\section{Introduction}

In 2006, R. Nandakumar and N. Ramana Rao \cite{Nandakumar06} asked whether any convex polygon for any integer $n\ge2$
can be cut into $n$ convex pieces of equal area that also have the same perimeter.
This problem is easily generalized to ask whether any probability measure on~$\R^d$ with a continuous
density function admits a partition of~$\R^d$ into $n$ convex regions that capture equal parts of
the measure and equalize some $d-1$ additional functions on non-empty convex regions.
The generalized Nandakumar--Ramana Rao problem captured a lot of attention 
(see e.g.\ Nandakumar \& Ramana Rao \cite{NandakumarRamanaRao12}, Bárány et al.\ \cite{BaranyBlagojevicSzucs},
Karasev et al.\ \cite{KarasevHubardAronov:equipartion}, and Blagojevi\'c \& Ziegler \cite{BlagojevicZieglerEquipartitions}),
but even the original basic version of the problem is still open in the case when 
$n$ is not a power of a prime.

All approaches to the Nandakumar--Ramana Rao problem and to similar problems start with
constructing suitable configuration spaces, that is, spaces of partitions of $\R^d$ into
$n$ convex regions. In particular, Karasev observed that the classical configuration spaces
$F(\R^d,n)$ of $n$ distinct labelled points in~$\R^d$ can---via optimal transport---be used
to parameterize \emph{regular} $n$-partitions (that is, weighted Voronoi partitions),
while Nandakumar \& Ramana Rao \cite{NandakumarRamanaRao12} used products of spheres to parameterize the
partitions that arise from nested hyperplane $2$-partitions.

Motivated by the Nandakumar--Ramana Rao problem we here consider the set $\C(\R^d,n)$
of \emph{all} partitions of $\R^d$ into  $n$  convex regions, for positive integers $d$ and $n$.  
We describe a natural metric on this set, and thus can treat $\C(\R^d,n)$ as the 
\emph{space of all convex $n$-partitions of $\R^d$}, which in particular is a topological space. 
These spaces for different $n$ and $d$ are our main object of study.
We construct natural compactifications of these spaces. 
One main result is that the spaces $\C(\R^d,n)$ can be described as finite unions of semialgebraic sets. 
Thus, in particular, they have well-defined dimensions. We give two possible ways to decompose spaces 
of $n$-partitions as unions of semialgebraic pieces.  Since all the regions of a partition are polyhedral, 
we obtain one parameterization from the hyperplane description of the regions.
We also define the face structure for each partition and use this to define and distinguish \emph{combinatorial types}. 
(These definitions are far from being straightforward~\ldots)
Realization spaces arise as the spaces of all partitions that share the same combinatorial type. These realization spaces 
also give us semialgebraic pieces that we glue together to obtain the whole space $\C(\R^d,n)$.  
We also discuss the dimensions of these realization spaces.
 
This paper presents main results of the doctoral thesis of the first author \cite{thesis}. 
As far as we know, there is no previous reference of the spaces of convex $n$-partitions in the literature, 
although some similar spaces and particular cases have been studied.  
  

\section{Convex \emph{n}-partitions}\label{firstsection}

We begin here with the definition of convex $n$-partitions. 

\begin{definition}[Convex partitions of $\R^d$, regions, $n$-partitions]\label{npartition}
Let $n$ and $d$ be two positive integers. A \emph{convex partition of\/ $\R^d$} is an ordered list $\P=(P_1,\,P_2,\ldots, P_n)$ of non-empty open convex subsets $P_i\subseteq \R^d$ that are pairwise disjoint, so that the union $\bigcup_{i=1}^n \overline{P_i}$ equals $\R^d$, where $\overline{P_i}$ denotes the closure of $P_i$.  
Each of the sets $P_i$ is called a \emph{region} of~$\P$.
Partitions into $n$ convex regions are also called \emph{$n$-partitions}.
\end{definition}

Since all partitions we are dealing with here are convex, we will often omit this word. The regions of an $n$-partitions are labeled from $1$ to $n$, where the order is important.  
\begin{definition}[Space of convex $n$-partitions]
The set of all convex $n$-partitions of $\R^d$ is denoted by $\C(\R^{d},n)$.
\end{definition}

As any two regions can be separated by a hyperplane
(by the Hahn--Banach Separation Theorem, see Rudin \cite[Theorem 3.4]{rudin1991functional}),
we get that each region in an $n$-partition
can be described as the set of all points that satisfy a finite set of linear inequalities:

\begin{proposition}\label{polyhedral}
Let $\P=(P_1,\,P_2,\ldots, P_n)$ be an $n$-partition of $\R^d$. Then each region ${P_i}$ is the solution set of $n-1$ strict linear inequalities, so it is the interior of a (possibly unbounded) $n$-dimensional polyhedron with at most $n-1$ facets.
\end{proposition}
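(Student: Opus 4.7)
The plan is to use Hahn--Banach once for each of the $n-1$ other regions, upgrade each separation to a strict one using openness, and then verify that the resulting system of $n-1$ strict linear inequalities cuts out exactly $P_i$.

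First, fix $i$ and, for every $j\ne i$, apply the Hahn--Banach separation theorem to the disjoint non-empty convex sets $P_i$ and $P_j$. This yields a linear functional $\ell_{ij}$ and a constant $c_{ij}$ with $\ell_{ij}(x)\le c_{ij}$ on $P_i$ and $\ell_{ij}(x)\ge c_{ij}$ on $P_j$. I would then argue that both inequalities are in fact strict on their respective regions: if some $x_0\in P_i$ satisfied $\ell_{ij}(x_0)=c_{ij}$, openness of $P_i$ would give a small ball around $x_0$ inside $P_i$ on which $\ell_{ij}$ takes values both above and below $c_{ij}$, contradicting $\ell_{ij}\le c_{ij}$ on $P_i$. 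Hence $P_i\subseteq H_i:=\bigcap_{j\ne i}\{x:\ell_{ij}(x)<c_{ij}\}$, which already shows $P_i$ is contained in the interior of a polyhedron described by $n-1$ closed half-spaces.

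The key step is the reverse inclusion $H_i\subseteq P_i$. Take $x\in H_i$. By continuity of each $\ell_{ij}$, the strict inequalities persist on a neighborhood of $x$, so $x\notin\overline{P_j}$ for any $j\ne i$; since $\bigcup_k \overline{P_k}=\R^d$, this forces $x\in\overline{P_i}$. What remains is to rule out that $x\in\partial P_i$. For this I would prove the small auxiliary fact that every boundary point $y$ of $P_i$ lies in $\overline{P_j}$ for some $j\ne i$: otherwise a neighborhood $U$ of $y$ would miss every $\overline{P_j}$ with $j\ne i$, and the covering property would give $U\subseteq\overline{P_i}$, placing $y$ in the interior of $\overline{P_i}$; but for a non-empty open convex set $P_i\subseteq\R^d$ one has $\inter(\overline{P_i})=P_i$, contradicting $y\in\partial P_i$. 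Combining this fact with the previous observation forces $x\in P_i$, so $H_i=P_i$.

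From $H_i=P_i$ the conclusions follow immediately: $P_i$ is the solution set of the $n-1$ strict linear inequalities $\ell_{ij}(x)<c_{ij}$, its closure $\overline{P_i}$ is the polyhedron defined by the corresponding non-strict inequalities (so at most $n-1$ facets), and since $P_i$ is non-empty and open in $\R^d$ the polyhedron is full-dimensional (the proposition's ``$n$-dimensional'' should read ``$d$-dimensional''). The main obstacle is the second paragraph: turning Hahn--Banach into a clean equality $H_i=P_i$ requires the auxiliary claim that no boundary point of $P_i$ is isolated from the other closures, which in turn rests on the convex-geometric identity $\inter(\overline{P_i})=P_i$ for non-empty open convex $P_i$.
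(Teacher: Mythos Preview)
Your argument is correct and matches the paper's approach: the paper does not spell out a proof but only states, just before the proposition, that the regions can be pairwise separated by hyperplanes via Hahn--Banach, and later refers to ``the proof of Proposition~\ref{polyhedral}'' as though the details are routine. You have supplied exactly those details---the upgrade to strict separation using openness, the reverse inclusion $H_i\subseteq P_i$ via the covering $\bigcup_k\overline{P_k}=\R^d$, and the auxiliary fact $\inter(\overline{P_i})=P_i$ for non-empty open convex sets---all of which are sound. Your observation that ``$n$-dimensional'' should read ``$d$-dimensional'' is also correct.
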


\subsection{Spherical representation and partitions of $S^d$}
\label{sphsection}

We now introduce convex partitions of the unit $d$-sphere $S^d\subset \R^{d+1}$. Even if we are primarily interested 
in partitions of $\R^d$
(as in Definition \ref{npartition}), 
partitions of the sphere appear naturally and they are in some sense the more fundamental objects, 
which generalize partitions of the Euclidean space $\R^d$, and provide the natural setting for the
definition and discussion of faces (including “faces at infinity”) as well as for the construction of compactifications.

\begin{definition}[Convex partitions of the sphere]\label{sphericalnpartition}
Let $n$ and $d$ be two positive integers. A \emph{convex partition of $S^d$}  is a list $\Q=(Q_1,\,Q_2,\ldots, Q_n)$ of non-empty 
open convex subsets $Q_i\subseteq S^d$ that are pairwise disjoint, so that the union $\bigcup_{i=1}^n \overline{Q_i}$ equals $S^d$.
\end{definition}

A vector $\v=(v_0,\ldots,v_d)\in S^d$, $\|\v\|=1$, lies in the \emph{upper hemisphere} $S^d_+$ if its first coordinate is positive, $v_0>0$. 
Respectively, $\v$ is in the \emph{lower hemisphere} $S^d_-$ if $\v\in S^d$ and $v_0<0$. The \emph{equator} $S^d_0$ 
of $S^d$ is formed by all $\v\in S^d$ with $v_0=0$.
For any $\x\in \R^d$ we construct the point 
\[
    \widehat\x=\frac{1}{\sqrt{1+\|\x\|^2}}\binom{1}{\x}\in\R^{d+1}, 
\] 
that is, the intersection of the ray $r(\x)=\{\lambda \binom{1}{\x}\in\R^{d+1}: 0\le \lambda\in\R\}$ with $S^d$. 
The map $\x\mapsto \widehat\x$ gives a bijection between $\R^d$ and $S^d_+$. 

\begin{definition}[Spherical representation] \label{hemisphere}
The \emph{spherical representation} of an $n$-partition $\P$ of $\R^d$ is the spherical $(n+1)$-partition 
$\widehat \P =(\widehat P_1,\ldots,\,\widehat P_n,\,\widehat P_\infty)$ of $S^d$, 
with regions $\widehat P_i=\{\widehat \x:\x\in P_i\}$ for $i=1,\ldots,n$ 
and an extra region $\widehat P_\infty:=S^d_-$.
Thus for the spherical representation $\widehat \P$ of an $n$-partition $\P\in \C(\R^d,n)$, we denote by $\infty$ the subindex $n+1$.
\end{definition}
 
\begin{proposition}
  The spherical representation $\widehat \P$ of an $n$-partition $\P$ of $\R^d$ is a convex partition of $S^d$ with $n+1$ regions.
\end{proposition}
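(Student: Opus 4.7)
The plan is to verify the four defining properties of a convex partition of $S^d$ (non-empty, open, convex regions; pairwise disjoint; closures covering $S^d$) for the list $\widehat\P=(\widehat P_1,\ldots,\widehat P_n, S^d_-)$, exploiting the fact that $\x\mapsto\widehat\x$ is a homeomorphism between $\R^d$ and the open upper hemisphere $S^d_+$.

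First I would establish the properties of the regions $\widehat P_i$ for $i=1,\ldots,n$. Non-emptiness and openness in $S^d$ are immediate from the homeomorphism $\R^d\to S^d_+$, since $P_i$ is a non-empty open subset of $\R^d$ and $S^d_+$ is open in $S^d$. For convexity on the sphere I would use the radial cone characterization: a subset $Q\subseteq S^d$ is convex if and only if $\cone(Q)=\{\lambda\v:\lambda\ge0,\ \v\in Q\}$ is a convex cone in $\R^{d+1}$. Here $\cone(\widehat P_i)\setminus\{0\}$ equals $\{(\lambda,\lambda\x):\lambda>0,\ \x\in P_i\}$, and a direct computation shows that if $(\mu_1,\y_1)$ and $(\mu_2,\y_2)$ both lie in this set, then for any $t\in[0,1]$ the convex combination has positive first coordinate $\mu=(1-t)\mu_1+t\mu_2$ and its projection $\y/\mu$ is a convex combination of the points $\y_k/\mu_k\in P_i$, hence lies in $P_i$ by convexity of~$P_i$. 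Alternatively, invoking Proposition~\ref{polyhedral}, one writes each $P_i$ using strict linear inequalities $\a_j^\top\x<b_j$ and verifies that on $S^d_+$ these become the homogeneous inequalities $-b_jv_0+\a_j^\top(v_1,\ldots,v_d)<0$, defining a spherically convex region.

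Next, the extra region $\widehat P_\infty=S^d_-$ is non-empty, open, and convex on $S^d$, since it is the open hemisphere cut out by the homogeneous inequality $v_0<0$. For pairwise disjointness, all $\widehat P_i$ with $i\le n$ lie in $S^d_+$ (as $\widehat\x$ has first coordinate $1/\sqrt{1+\|\x\|^2}>0$), so $\widehat P_i\cap\widehat P_\infty=\emptyset$; disjointness among the $\widehat P_i$ themselves follows from injectivity of $\x\mapsto\widehat\x$ together with the disjointness of the $P_i$ in $\R^d$.

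Finally, for the covering property I would split $S^d$ as $S^d_+\cup S^d_0\cup S^d_-$. The closure $\overline{\widehat P_\infty}=\overline{S^d_-}=S^d_-\cup S^d_0$ covers the closed lower hemisphere. For the upper hemisphere, any $\v\in S^d_+$ has the form $\v=\widehat\x$ for a unique $\x\in\R^d$; since $\bigcup_{i=1}^n\overline{P_i}=\R^d$, there exists $i$ with $\x\in\overline{P_i}$, and then $\v\in\widehat{\overline{P_i}}\subseteq\overline{\widehat P_i}$, where the inclusion uses continuity of $\x\mapsto\widehat\x$. Thus $\bigcup_{i=1}^n\overline{\widehat P_i}\cup\overline{\widehat P_\infty}=S^d$.

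The main subtlety, and the only place where one needs to be careful, is the interplay between closures in $\R^d$ and in $S^d$: when $P_i$ is unbounded, $\overline{\widehat P_i}$ is strictly larger than $\widehat{\overline{P_i}}$ and picks up limit points on the equator $S^d_0$. This is harmless for covering (the equator is already absorbed by $\overline{\widehat P_\infty}$) but is the conceptual point that motivates passing to the sphere in the first place, since those equatorial limits are precisely the "faces at infinity" that the paper will later exploit.
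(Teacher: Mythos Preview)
Your argument is correct and complete. The paper itself does not supply a proof of this proposition---it is stated and immediately followed by Example~\ref{ex1}---so there is no ``paper's proof'' to compare against; you have simply written out the routine verification that the authors left to the reader. Your check of the four defining conditions (non-empty, open, spherically convex, pairwise disjoint, closures cover $S^d$) is accurate, and your two alternative arguments for spherical convexity of $\widehat P_i$ (via convexity of the radial cone, or via the homogenized linear inequalities coming from Proposition~\ref{polyhedral}) are both valid. The closing remark about $\overline{\widehat P_i}$ versus $\widehat{\overline{P_i}}$ is a nice observation and indeed foreshadows the faces at infinity introduced in Section~\ref{facedsection}.
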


\begin{example}\label{ex1}
 Figure \ref{example1} shows an $4$-partition $\P$ of $\R^2$ together with an upper view of its spherical representation $\widehat \P$, where we only depict the upper hemisphere $S^2_+$. The face $\widehat P_{\infty}$ corresponds to the side of the sphere hidden to us. This partition includes two parallel lines as the boundary of $P_3$ that in the spherical representation meet at two points ``at infinity'' (on the boundary of $S^2_+$).
\end{example}

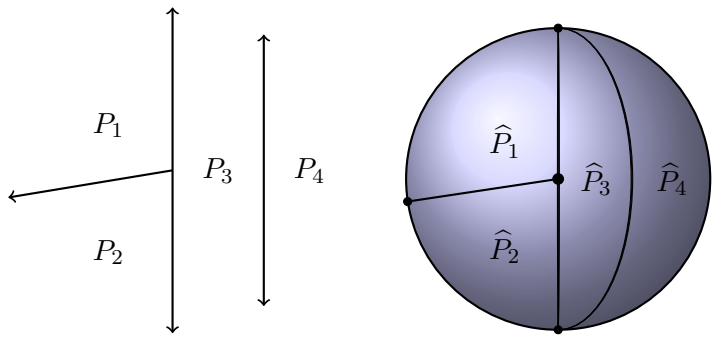
\begin{figure}[htb]\centering
    \begin{tikzpicture}[scale=1.2]
      \draw [thick,<->] (0, -1.8) -- (0, 1.8); 
      \draw [thick,<->] (1, -1.5) -- (1, 1.5); 
      \draw [thick,->] (0, 0) -- (-1.8, -0.3);

      \node at (1.5, 0) {$P_4$};
      \node at (0.5, 0) {$P_3$};
      \node at (-0.7, 0.5) {$P_1$};
      \node at (-0.7, -0.9) {$P_2$};
    \end{tikzpicture}
    \qquad
  \begin{tikzpicture}
    \shade [ball color=blue!20]
    (0,0) circle (2);

    \draw (0, -2) -- (0, 2); 
    \draw (0,0) circle (0.01 and 2);
    \draw (0,2) arc (90:-90:0.015 and 2);

    \draw (0,2) arc (90:-90:0.98 and 2);
    \draw (0,2) arc (90:-90:0.97 and 2);
    \draw (0,2) arc (90:-90:0.96 and 2);%

    \draw [thick] (0, 0) -- (-1.98, -0.3);
    \draw [thick] (0, 0) circle (2);
    \draw [fill] (0, 0) circle (2pt);
    \draw [fill] (0, 2) circle (1.5pt);
    \draw [fill] (0, -2) circle (1.5pt);
    \draw [fill] (-1.98,-0.3) circle (1.5pt);

      \node at (1.5, 0) {$\widehat P_4$};
      \node at (0.5, 0) {$\widehat P_3$};
      \node at (-0.7, 0.5) {$\widehat P_1$};
      \node at (-0.7, -0.9) {$\widehat P_2$};
    \end{tikzpicture}

    \caption{A 4-partition  $\P\in \C(\R^{2},4)$ together with an upper view of its spherical representation.}
\label{example1} 
\end{figure}
  
\subsection{Faces and the face poset}
\label{facedsection}

Since we want to study the behavior of $n$-partitions at infinity as part of the face structure, 
it will be convenient to use for this the spherical representation. First we introduce the faces of spherical partitions. 
Faces will be in correspondence with index sets.
The faces of an $n$-partition $\P$ ordered by inclusion will form the face poset of~$\P$. 
We will see that it is the poset of a finite regular CW complex homeomorphic to a closed ball. 
(As usual, \emph{poset} stands for \emph{partially ordered set}, see e.g.\ Stanley \cite[Chapter~3]{stanley}.) 

\begin{definition}[Index sets and faces of spherical partitions] 
Let $\Q=(Q_1,\ldots,\,Q_{n})$ be a partition of $S^d$.  Let $\overline Q_i$ be the closure of $Q_i$ in $S^d$ and 
$C_i=\cone(\overline Q_{i})$ for $1\le i\le n$.  
For any point $\x$ in $\R^{d+1}$, we define the \emph{index set} $I(\x)$ to be the set of values $i\in \{1,\,2, \ldots, n\}$ 
such that $\x\in C_i$. We define $\I(\Q)$ to be the set of all index sets $I(\x)$ for $\x\in \R^{d+1}$.  

The \emph{faces of a spherical partition} $\Q$ are all sets $F_I\subseteq S^d$ that can be obtained as 
an intersection of the form $F_I=\bigcap_{i\in I} \overline{Q_i}$ for some $I\in \I(\Q)$. 
That is, for each $\x \in \R^{d+1}$ we obtain the spherical face 
\[
        F_{I(\x)}=\bigcap_{i\in I(\x))} \overline{Q_i}\subseteq S^d.
\]
\end{definition} 

\begin{lemma}\label{reversedinclusion}
If $I(\x)\subsetneq I(\x')$ then $F_{I(\x')}\subsetneq F_{I(\x)}$. 
\end{lemma}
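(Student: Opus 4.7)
The plan is to dispatch the weak inclusion $F_{I(\x')}\subseteq F_{I(\x)}$ as immediate from the definition of faces as intersections: since $I(\x)\subseteq I(\x')$, intersecting over the larger index set can only shrink the result. Then I would exhibit a concrete point of $F_{I(\x)}\setminus F_{I(\x')}$ to upgrade this to strict inclusion.

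My candidate witness is the radial projection $\s:=\x/\|\x\|\in S^d$. Before using it, I would rule out the degenerate case $\x=0$: the origin belongs to every cone $C_i$, so $I(0)=\{1,\ldots,n\}$ is already the maximal index set and no $I(\x')$ can be strictly larger than it. Hence under the hypothesis $I(\x)\subsetneq I(\x')$ we must have $\x\ne 0$, so that $\s$ is well defined on $S^d$.

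Next I would verify the two required properties of $\s$. For any $i\in I(\x)$, the identity $C_i=\cone(\overline{Q_i})$ lets me write $\x=\lambda\q$ with $\lambda>0$ and $\q\in\overline{Q_i}\subseteq S^d$, whence $\s=\q\in\overline{Q_i}$; intersecting over all $i\in I(\x)$ yields $\s\in F_{I(\x)}$. For the separation, I would pick $j\in I(\x')\setminus I(\x)$, which exists by strictness. Then $\x\notin C_j$, so the same cone argument forces $\s\notin\overline{Q_j}$, and since $j\in I(\x')$ gives $F_{I(\x')}\subseteq\overline{Q_j}$, we conclude $\s\notin F_{I(\x')}$.

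The argument is little more than bookkeeping with the definitions; the only subtlety worth flagging is the elimination of $\x=0$ at the outset. Once that is handled, the bijection between rays in $\R^{d+1}\setminus\{0\}$ and points of $S^d$ does all the remaining work, translating the set-membership condition $\x\in C_i$ into $\s\in\overline{Q_i}$ on both the positive and the negative side of the argument.
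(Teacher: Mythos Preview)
Your proof is correct and follows essentially the same approach as the paper: both establish the weak inclusion directly from the definition of $F_I$ as an intersection, and both exhibit (the radial projection of) $\x$ as a witness point lying in $F_{I(\x)}$ but not in $F_{I(\x')}$. The paper's proof writes ``$\x\in F_{I(\x)}$'' directly, tacitly identifying $\x$ with its image on $S^d$; your version makes this normalization explicit via $\s=\x/\|\x\|$ and separately disposes of the degenerate case $\x=0$, which is a minor but worthwhile clarification.
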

 
\begin{proof}
  The inclusion $F_{I(\x')}\subseteq F_{I(\x)}$ is clear since the intersection $F_{I(\x')}= \bigcap_{i\in I(\x')}\overline{Q_i}$ 
  includes all terms involved in computing $F_{I(\x')}$. Also if $I(\x)\neq I(\x')$ then $\x\notin F_{I(\x')}$, 
  since there is at least one $i\in I(\x')- I(\x)$ such that $\x\notin \overline{Q_i}$. Since $\x\in F_{I(\x)}$ 
  we get the strict inclusion $F_{I(\x')}\subsetneq F_{I(\x)}$.
\end{proof}

\begin{definition}[Faces of partitions of $\R^d$, faces at infinity, interior faces, bounded faces]
The \emph{faces of an $n$-partition} $\P$ of $\R^d$ are all the faces of the spherical representation $\widehat \P$, 
with the exception of $F_{\{\infty\}}=\overline{S^d_-}$.
Faces  $F_{I(\x)}$ of~$\P$ with $\infty\in I(\x)$ are called \emph{faces at infinity} of~$\P$. 
All other faces are called \emph{interior faces}. A face is \emph{bounded} if it does not contain any face at infinity.
\end{definition} 

With this definition, faces of an $n$-partition $\P$ of $\R^d$ are \emph{not} subsets of $\R^d$, but they are
subsets of the closure of $S^d_+$. Faces at infinity are precisely the faces of~$\P$ contained in the boundary of $S^d_+$, 
which is the equator $S_0^d$.
For a convex $n$-partition $\P$ we set $\I(\P)=\I(\widehat \P)\setminus \big\{\{\infty\}\big\}$ to be the set of indices of faces of~$\P$. 

Each $n$-partition has only finitely many faces $I(\x)$, as they are subsets of the finite set ${I({\bf 0})}=\{1,\ldots,\,n,\infty\}$ 
(where ${\bf 0}$ represents the origin in $\R^{d+1}$). 
The union of all faces of~$\P$ will be precisely $S^d_+$, since any point $\x\in S^d_+$ is contained in
a face, namely in $F_{I(\x)}$.

\begin{definition}[Face poset]
  The \emph{face poset} of an $n$-partition $\P$ is the set of all faces of~$\P$, partially ordered by inclusion. 
  It is denoted as $\F(\P)$.
\end{definition}
 
\begin{example} 
In Figure \ref{faceposet} we show the face poset of the partition $\P$ on Example \ref{ex1}. Here we denote by $  F_{123}$ the face $  F_{\{1,2,3\}}, $ and similarly for other sets of indices. Notice that $  F_{I({\bf 0})}=  F_{1234\infty}=\emptyset$. To obtain the face poset of $\widehat \P$ we have to add the face $  F_{\infty}$ as another maximal face above all faces at infinity (appearing with dotted lines in the figure).
\begin{figure}[htb]\centering
    \begin{tikzpicture}[inner sep=1pt,outer sep=1pt]
      \node (I0) at ( 0, 0) {$\emptyset$};
      
      \node (123) at (-2.4, 1)  {$F_{123}$};
      \node (012) at (-.8, 1)  {$F_{12\infty}$};
      \node (0134) at (.8, 1)  {$F_{134\infty}$};
      \node (0234) at (2.4, 1)  {$F_{234\infty}$};
      
      \node (12) at (-3, 2.2)  {$F_{12}$};
      \node (13) at (-2, 2.2)  {$F_{13}$};
      \node (23) at (-1, 2.2)  {$F_{23}$};
      \node (34) at (0, 2.2)  {$F_{34}$};
      \node (01) at (1, 2.2)  {$F_{1\infty}$};
      \node (02) at (2, 2.2)  {$F_{2\infty}$};
      \node (04) at (3, 2.2)  {$F_{4\infty}$};
      
      \node (1) at (-2.6, 3.4)  {$F_{1}$};
      \node (2) at (-1.3,  3.4)  {$F_2$};
      \node (3) at (0, 3.4)  {$F_3$};
      \node (4) at (1.3, 3.4)  {$F_4$};
      \node (0) at (2.6, 3.4)  {$F_\infty$};
      
      \draw [-] (I0) -- (123); 
      \draw [-] (I0) -- (012); 
      \draw [-] (I0) -- (0134); 
      \draw [-] (I0) -- (0234);
      
      \draw [-] (123) -- (12); 
      \draw [-] (123) -- (13); 
      \draw [-] (123) -- (23); 
      \draw [-] (012) -- (01); 
      \draw [-] (012) -- (02); 
      \draw [-] (012) -- (12); 
      \draw [-] (0134) -- (01); 
      \draw [-] (0134) -- (04); 
      \draw [-] (0134) -- (34); 
      \draw [-] (0134) -- (13); 
      \draw [-] (0234) -- (02); 
      \draw [-] (0234) -- (04); 
      \draw [-] (0234) -- (34); 
      \draw [-] (0234) -- (23); 
      \draw [-] (01) -- (1); 
      \draw [-] (12) -- (1); 
      \draw [-] (13) -- (1); 
      \draw [-] (02) -- (2); 
      \draw [-] (12) -- (2); 
      \draw [-] (23) -- (2); 
      \draw [-] (13) -- (3); 
      \draw [-] (23) -- (3); 
      \draw [-] (34) -- (3); 
      \draw [-] (04) -- (4); 
      \draw [-] (34) -- (4); 
      \draw [dotted] (0) -- (01); 
      \draw [dotted] (0) -- (02); 
      \draw [dotted] (0) -- (04); 
      
    \end{tikzpicture}
    \caption{Face poset of the partition $\P$ on Example \ref{ex1}.}
    \label{faceposet} 
  \end{figure}
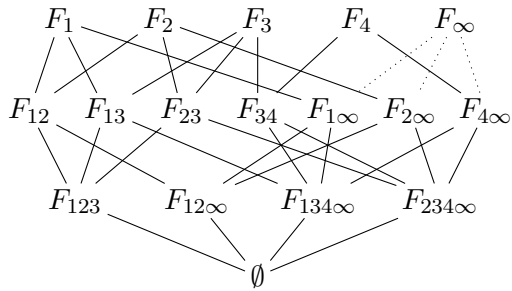
\end{example}
 
Let us provide a brief overview of the most relevant notation we have introduced so far related with an $n$-partition $\P$:
\begin{itemize}
\item $\P=(P_1,\,P_2,\ldots, P_n)$ denotes an $n$-partition of $\R^d$, $\P\in \C(\R^{d},n)$. 
\item $\widehat \P=(\widehat P_1,\ldots, \widehat P_n,\,\widehat P_\infty)$ is the spherical representation of~$\P$, a partition of~$S^d$ into $n+1$ regions.
\item $\I(\P)=\I(\widehat \P)\setminus \big\{\{\infty\}\big\}$ is the set of indices of faces of~$\P$. 
\item $F_{I}\subset S^d$ are the faces of~$\P$, for $I=I(\x)\in \I(\P)$ and $\x\in\R^{d+1}$.  
\item $C_I=\cone(F_I)$ are the corresponding cones in $\R^{d+1}$. 
\end{itemize}
Faces of~$\P$ of dimension $k$ are also known as $k$-faces.
The $0$-faces of~$\P$ are called the \emph{vertices} and the $1$-faces are called  \emph{edges}, but only in case they are contractible. As an example, a partition of $\R^2$ given by parallel lines has a $0$-face that is not vertex and is the union of two points at infinity. We introduce now a set of partitions where such strange effects do not occur.

\begin{definition}[Essential partitions]
  An $n$-partition $\P$ is \emph{essential} if $F_{I({\bf 0})}(\P)=\emptyset$. 
\end{definition}
Since all $I\in \I(\P)$ are contained in $I({\bf 0})=\{1,\ldots,\,n,\,\infty\}$, the face $F_{I({\bf 0})}$ is the minimal face of the partition.  It is easy to check that an $n$-partition $\P$ is essential if and only if it has a bounded face, and moreover, it is essential if and only if it has an interior vertex.

\begin{definition}[Subfaces]\label{def:subfaces}
The \emph{subfaces} of a face $F_I$ of an $n$-partition $\P$ are the faces of $F_I$ considered as a convex spherical polyhedron, i.\ e.\ the faces of the cone $C_I$ intersected with $S^d$ for $I\in\I(\P)$. Subfaces of a face of~$\P$ are also called subfaces of~$\P$. 
\end{definition}

Subfaces of dimension $k$ are denoted as $k$-subfaces. It is easy to verify that
each subface is a union of faces; see \cite[Lemma 3.23]{thesis}. 

\begin{example}\label{example:subfaces}
For the partition in Figure \ref{example1}, the region $\widehat P_3$ is bounded by two subfaces of dimension~$1$. 
One of these subfaces is the face $F_{34}$ of the partition, while the other one is the union of the faces $F_{13}$ and $F_{23}$.
\end{example}

\begin{theorem}\label{cwcomplex}
If $\P$ is an essential $n$-partition of $\R^d$, then the faces of~$\P$ form a regular CW complex homeomorphic to $\overline{S^d_+}$. 
\end{theorem}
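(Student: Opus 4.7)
The plan is to exhibit the faces of $\P$ as the cells of a regular CW decomposition of $\overline{S^d_+}$, deriving the ball structure of each cell from the pointedness of the associated cone in $\R^{d+1}$.

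First I would set up the polyhedral picture. By Proposition~\ref{polyhedral}, each region $P_i$ is an open polyhedron, so each spherical region $\widehat P_i$ is an open spherical polyhedron and each face $F_I = \bigcap_{i \in I}\overline{\widehat Q_i}$ is a closed spherical polyhedron; equivalently, $C_I = \cone(F_I) \subseteq \R^{d+1}$ is a polyhedral cone.

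The decisive consequence of essentiality that I would extract next is that every cone $C_I$ with $I \in \I(\P)$ is pointed, equivalently $F_I$ contains no pair of antipodal points. Suppose for contradiction that $\pm\v \in F_I$ for some $\v \in S^d$. Because every face of $\P$ lies in $\overline{S^d_+}$, the first coordinate of $\pm\v$ must vanish, so both points sit on the equator $S^d_0$ and therefore in $\overline{\widehat P_\infty}$ as well. The condition $\pm\v \in \overline{\widehat P_i}$ for each $i \in I$ translates into the region $P_i$ being unbounded in two opposite directions, hence containing a full line by convexity. A rigidity argument for convex partitions---if one region is invariant under translation by a vector, then the separating hyperplanes to its neighbouring regions must be invariant too, and by propagation all regions are invariant---then gives $\pm\v \in \overline{\widehat P_j}$ for every $j$. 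Together with the membership in $\overline{\widehat P_\infty}$ this places $\pm\v$ in $F_{I({\bf 0})}$, contradicting $F_{I({\bf 0})} = \emptyset$. With $C_I$ now pointed, radial projection onto a transverse hyperplane identifies $F_I$ with a polytope, hence with a closed ball of dimension $\dim C_I - 1$. This rigidity step is the principal obstacle; once it is in place the rest is standard polyhedral fan bookkeeping.

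Given the ball structure of each face, the CW decomposition is natural: the open cells are the relative interiors $\relint(F_I)$ for $I \in \I(\P)$. They are pairwise disjoint because Lemma~\ref{reversedinclusion} forces $I(\x) = I$ whenever $\x \in \relint(F_I)$, and they cover $\overline{S^d_+}$ since every $\x \in \overline{S^d_+}$ lies in $\relint(F_{I(\x)})$. The boundary of each closed cell $F_I$ is a union of strictly smaller faces $F_J$ with $I \subsetneq J$, so the attaching maps are inclusions---homeomorphisms onto their images, yielding regularity. Since the only face of $\widehat\P$ not lying in $\overline{S^d_+}$ is the excluded face $F_{\{\infty\}} = \overline{S^d_-}$, the underlying space of the resulting CW complex is exactly $\overline{S^d_+}$.
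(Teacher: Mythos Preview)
The paper gives no proof of Theorem~\ref{cwcomplex}; it defers entirely to \cite{thesis}. So the only question is whether your argument is correct on its own terms.

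It is not. Your central claim---that for an essential partition every cone $C_I$ with $I\in\I(\P)$ is pointed---is false, and the paper's own Example~\ref{ex1} is a counterexample. That $4$-partition is essential (Figure~\ref{faceposet} shows $F_{I({\bf 0})}=F_{1234\infty}=\emptyset$), yet $P_3$ is an infinite strip between two parallel lines, so $\overline{\widehat P_3}$ contains the antipodal equatorial vertices $F_{134\infty}$ and $F_{234\infty}$ and hence $C_{\{3\}}$ is not pointed. The paper explicitly flags this right after Lemma~\ref{pointedareessential}: Example~\ref{ex1} is essential but not pointed. Your ``rigidity'' propagation is exactly where the argument fails. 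In that example the hyperplane separating $P_3$ from its neighbour $P_1$ is indeed invariant under translation in the strip direction, but $P_1$ has another facet---the ray separating it from $P_2$---which is not invariant. So $P_1$ itself is not translation-invariant, the direction $-\v$ does not lie in $\overline{\widehat P_1}$, and the inference ``$\pm\v\in\overline{\widehat P_j}$ for every $j$'' collapses.

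What you actually need is the weaker statement: for essential $\P$, no nonempty face $F_I$ with $I\in\I(\P)$ has $C_I$ equal to a \emph{linear subspace}. The paper records precisely this just before Example~\ref{halflinearexample} (citing \cite[Lemma~3.22]{thesis}): the only face whose cone is linear is the minimal face $F_{I({\bf 0})}$, which is empty by essentiality. From this one can still conclude that each $F_I$ is a closed ball: choose $p$ in the relative interior of $F_I$; then $-p\notin C_I$ (if $-p\in C_I$ with $p\in\relint C_I$, convexity forces $-q\in C_I$ for every $q\in C_I$, making $C_I$ linear), so $F_I$ avoids the antipode of an interior point, is geodesically star-shaped from $p$, and together with its polyhedral structure is a ball of the right dimension. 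With that replacement, the remainder of your outline---disjointness of relative interiors via Lemma~\ref{reversedinclusion}, covering of $\overline{S^d_+}$, and regularity via boundary inclusions---goes through.
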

  
If the partition is not essential, one can see it as a partition of a lower-dimensional subspace.
  
\begin{proposition}
 The order complex of the face poset of an $n$-partition $\P$ is homeomorphic to a ball of dimension $d-k$, where $k=\dim F_{I({\bf 0})}$.
\end{proposition}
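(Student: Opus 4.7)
The plan is to reduce to the essential case via Theorem~\ref{cwcomplex} combined with the classical fact that the order complex of the face poset of a regular CW complex is homeomorphic to the complex (barycentric subdivision), and then to handle the non-essential case by quotienting the fan $\{C_I:I\in\I(\widehat\P)\}$ in $\R^{d+1}$ by its common lineality.

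Suppose first that $\P$ is essential, so $F_{I({\bf 0})}=\emptyset$ and $k=-1$. Theorem~\ref{cwcomplex} gives that the non-empty faces of~$\P$ form a regular CW complex homeomorphic to $\overline{S^d_+}$, a closed $d$-ball. By the barycentric subdivision theorem, $\Delta(\F(\P)\setminus\{\emptyset\})$ is homeomorphic to this CW complex, so to a $d$-ball. Since $\emptyset=F_{I({\bf 0})}$ is the unique minimum of $\F(\P)$, every chain in $\F(\P)$ either avoids $\emptyset$ or is the join of such a chain with $\emptyset$, and hence $\Delta(\F(\P))$ is the simplicial cone over $\Delta(\F(\P)\setminus\{\emptyset\})$ with apex~$\emptyset$. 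The topological cone over a $d$-ball is a $(d+1)$-ball, which equals the desired $(d-k)$-ball when $k=-1$.

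For the non-essential case, set $k:=\dim F_{I({\bf 0})}\ge 0$ and $L:=\bigcap_{i=1}^{n+1}C_i$. Using the identity $C_J=\bigcap_{i\in J}C_i$ for every $J\in\I(\widehat\P)$ and the fact that the $C_i$ cover $\R^{d+1}$, one checks that $L$ is a linear subspace of dimension $k+1$ contained in the equatorial hyperplane $\{v_0=0\}$, and that $L\subseteq C_I$ for every $I\in\I(\P)$. Each cone $C_I$ is therefore invariant under translation by $L$, and the projection $\R^{d+1}\to\R^{d+1}/L\cong\R^{d-k}$ carries the fan $\{C_I\}$ to a fan $\{C_I/L\}$ with trivial lineality. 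This quotient fan is the cone fan of an essential $n$-partition $\P'$ of $\R^{d-k-1}$, and the map $C_I\mapsto C_I/L$ is an inclusion-preserving bijection on cones, giving a poset isomorphism $\F(\P)\cong\F(\P')$ (under which the minimum $F_{I({\bf 0})}\cong S^k$ of $\F(\P)$ corresponds to the empty minimum of $\F(\P')$). Applying the essential case to $\P'$ yields $\Delta(\F(\P))\cong\Delta(\F(\P'))\cong B^{(d-k-1)+1}=B^{d-k}$.

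The main obstacle is the reduction in the non-essential case: one must verify that $\dim L=k+1$, that the quotient fan genuinely describes an essential $n$-partition of $\R^{d-k-1}$ with $n$ distinct labelled top-dimensional regions, and that no face relations are collapsed under the projection. These checks formalize the remark, immediately following Theorem~\ref{cwcomplex}, that a non-essential partition \emph{can be seen as a partition of a lower-dimensional subspace}.
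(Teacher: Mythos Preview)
Your proposal is correct and follows precisely the route the paper sketches. The paper does not supply its own proof here but defers to \cite{thesis}; the only guidance it gives is the sentence ``If the partition is not essential, one can see it as a partition of a lower-dimensional subspace,'' which is exactly the lineality-quotient reduction you carry out, combined with Theorem~\ref{cwcomplex} and barycentric subdivision in the essential case.

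Two remarks on the details you flag as ``the main obstacle.'' First, the fact that $L=C_{I({\bf 0})}$ is a \emph{linear} subspace (and not merely a cone) is genuinely non-trivial; this is the content of \cite[Lemma~3.22]{thesis}, which the paper cites just before Example~\ref{halflinearexample}. A direct argument: $L=\{0\}\times\bigcap_{i\le n}\rec(\overline{P_i})$, and if $v$ is a common recession direction then translating the whole partition by $v$ shows $\overline{P_i}+v=\overline{P_i}$ for every~$i$, hence $-v$ is also a common recession direction. Second, your verification that the quotient map induces a bijection on index sets (because $L\subseteq C_i$ makes each $C_i$ $L$-saturated, so $\x\in C_i\iff\bar\x\in C_i/L$) is the clean way to see that no face relations collapse and that $\I(\P)=\I(\P')$; this is exactly what is needed and your outline is sound.
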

 
For detailed proofs of these results see \cite{thesis}.
To obtain a CW-complex homeomorphic to a $d$-ball, we could instead make a refinement of the faces of any non-essential $n$-partition. One 
way to construct such refinement will be presented in Definition \ref{cwrefinement}, 
where node systems are introduced.


\section{Metric structure, topology and compactification} \label{metricsection}

Understanding the space $\C(\R^d,n)$ of all convex $n$-partitions of $\R^d$ is the main goal of this work. 
In this section we investigate its basic structure. 
First we define a metric on $\C(\R^d,n)$, which induces a topology, 
and introduce a natural compactification, the space $\C(\R^{d},\le\! n)$. 

For non-empty compact convex sets there are two standard ways to measure the distance between them. 
The \emph{Hausdorff distance} between two compact convex sets $A,\,B \subset \R^d$ is defined as  
\begin{equation}
\delta(A,B) = \max \big( \max_{a \in A} \min_{b \in B} \|a-b\|, \max_{b\in B}\min_{a\in A} \|a-b\| \big)
\end{equation}
and the \emph{symmetric difference distance} 
\begin{equation}
\theta(A,B) = \vol_d(A\vartriangle B),
\end{equation}
where $A\vartriangle B$ denotes the symmetric difference of sets $A$ and $B$.
Both of these metrics induce the same topology (see \cite{GruberKenderov}). 
These metrics cannot be used directly for unbounded regions, since then the distances would be typically infinite. 
To remedy this, instead of the usual volume $\vol_d$ on $\R^d$ we use a continuous measure $\mu$ that is finite, 
i.e.\ such that $\mu(\R^d)<\infty$. A measure is \emph{positive} if it is supported on the whole space $\R^d$. 
Throughout our discussion the measures we consider are positive, continuous and finite.

One natural choice for the measure that can be used for any measurable set $P\subseteq \R^d$ is the standard $d$-volume $\mu(P)=\vol_d(\widehat P)$ of the projection to the sphere;
this volume is bounded by $\vol_d(S^d_+)=\frac12\vol_d(S^d)$. 
With this measure $\mu$, we can fix a metric on $\C(\R^{d},n)$ as follows.

\begin{definition}\label{metricdmu}
Given two $n$-partitions $\P=(P_1, \ldots, P_n)$ and $\P'=(P'_1, \ldots, P'_n)$ of $\R^d$,  
their \emph{distance} $d_\mu(\P,\P')$ is the sum of the measures of the symmetric differences  
of the corresponding regions,  
\begin{equation*}
d_\mu(\P,\P') = \sum_{i=1}^n\mu(P_i\vartriangle P'_i).
\end{equation*}
\end{definition}

This distance $d_\mu$ is a metric and endows $\C(\R^d,n)$ with the topology that we use for our study. 
There is a natural compactification for $\C(\R^{d},n)$ that is obtained by considering generalized $n$-partitions 
that are allowed to have empty regions.

\begin{definition}[Non-proper and proper $n$-partitions]\label{nonproperpartitions}%
Let $n$ and $d$ be two positive integers. A \emph{non-proper $n$-partition of $\R^d$} is a list $\P=(P_1,\,P_2,\ldots, P_n)$ of $n$ open convex subsets $P_i\subseteq \R^d$ that are pairwise disjoint, so that the union $\bigcup_{i=1}^n \overline{P_i}$ equals $\R^d$, where now the $P_i$ are allowed to be empty and at least one of the $P_i$ is empty. 
The convex $n$-partitions as introduced in Definition~\ref{npartition} are called \emph{proper} in this context.  
We denote by $\C(\R^{d},\le \!n)$ the set of all proper or non-proper $n$-partitions.
\end{definition}

Thus $\C(\R^{d},n)$ is the subset of proper partitions in $\C(\R^{d},\le\!n)$.  
A non-proper partition can also be seen as a $k$-partition with $k<n$, whose regions have distinct labels
in the range from $1$ to $n$, while labels that are not used correspond to empty regions.  

Most of the results and definitions we have introduced up to now can be extended to non-proper partitions. 
The distance $d_{\mu}$ can be extended to $\C(\R^{d},\le \!n)$, so that it is also a metric and topological space. 
Non-proper partitions also have polyhedral regions as claimed by Theorem \ref{polyhedral} (now possibly empty). 
We can also talk about non-proper partitions of a $d$-sphere, spherical representation of non-proper partitions 
and face structure, where now the labels of the faces $I(\x)$ are contained in ${I({\bf 0})}=\{i:C_i\neq \emptyset\}$, 
the set of labels of all non-empty regions. For a region $P_i=\emptyset$, we define $C_i$ to be empty as well, 
so that we don't get new faces by adding extra empty regions. As before, a non-proper $n$-partition is \emph{essential} 
if $F_{I({\bf 0})}=\emptyset$.  

\begin{theorem}\label{compactness}
The space $\C(\R^{d},\le \!n)$ is compact.
\end{theorem}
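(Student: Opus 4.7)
The plan is to combine the Blaschke selection theorem for closed convex subsets of the compact sphere $S^d$ with the continuity of Lebesgue measure under Hausdorff limits of convex bodies.

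Given a sequence $\{\P^{(k)}\}_{k\ge 1}$ in $\C(\R^d,\le\!n)$, I would first pass to a subsequence on which the set $E\subseteq\{1,\ldots,n\}$ of indices of empty regions is constant, and then iteratively apply Blaschke selection to produce a further subsequence along which each non-empty closure $\overline{\widehat P_i^{(k)}}\subseteq\overline{S^d_+}$ converges in Hausdorff distance to a closed convex set $A_i\subseteq\overline{S^d_+}$; since $\overline{\widehat P_\infty^{(k)}}=\overline{S^d_-}$ is fixed, $A_\infty=\overline{S^d_-}$. I would then define the candidate limit by $\widehat P_i^\infty:=\inter_{S^d}(A_i)$ for $i\notin E\cup\{\infty\}$, $\widehat P_i^\infty:=\emptyset$ for $i\in E$, and $\widehat P_\infty^\infty:=S^d_-$. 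Note $\inter_{S^d}(A_i)\subseteq S^d_+$ because $A_i\subseteq\overline{S^d_+}$, so the inverse spherical projection produces a non-proper $n$-partition $\P^\infty\in\C(\R^d,\le\!n)$.

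The core step is to prove that these pieces actually form a spherical partition. Covering is direct: for any $\x\in S^d$, by pigeon-holing on the label of a region containing $\x$ I may assume $\x\in\overline{\widehat P_i^{(k)}}$ for all $k$ along a further subsequence, and Hausdorff convergence then gives $\x\in A_i$. Disjointness of interiors I would obtain from a volume comparison: continuity of $\vol_d$ along Hausdorff limits of compact convex sets in $S^d$ yields $\vol_d(A_i)=\lim_k\vol_d(\widehat P_i^{(k)})$ (the identity also holds when $\inter(A_i)=\emptyset$, because then $\widehat P_i^{(k)}$ is squeezed into shrinking $\varepsilon$-neighborhoods of a flat set and both sides vanish). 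Summing, $\sum_i\vol_d(A_i)=\vol_d(S^d)$, which combined with the covering forces $\vol_d(A_i\cap A_j)=0$ for $i\ne j$ and hence $\inter(A_i)\cap\inter(A_j)=\emptyset$. A local version of the same volume-balance argument in a small ball around any $\x\in A_i$ with $\inter(A_i)=\emptyset$ shows that $\x$ also lies in some $A_j$ with $\inter(A_j)\ne\emptyset$, so the flat limit pieces do not create holes in the partition.

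Finally, to establish convergence in the metric $d_\mu$, I need $\sum_i\vol_d(\widehat P_i^{(k)}\vartriangle\widehat P_i^\infty)\to 0$. When $\inter(A_i)\ne\emptyset$, Hausdorff convergence of convex bodies with non-empty interior upgrades to $L^1$ convergence of indicator functions (boundaries of convex bodies have measure zero); when $\inter(A_i)=\emptyset$ one has $\widehat P_i^\infty=\emptyset$ and $\vol_d(\widehat P_i^{(k)})\to 0$ directly. The main obstacle in this plan is the disjointness-of-interiors step: ruling out positive-measure overlap in the limit. The volume-balance argument sketched above is the cleanest route I know, but it rests on continuity of $\vol_d$ under Hausdorff limits of compact convex sets, which I would cite from standard convex geometry rather than reprove.
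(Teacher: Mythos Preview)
Your approach is correct and genuinely different from the paper's. The paper does not use Blaschke selection or any direct sequential-compactness argument on the regions. Instead it parametrizes (possibly non-proper) $n$-partitions by the normals of their $\binom{n}{2}$ separating hyperplanes, obtaining a continuous map $\pi:(S^d)^{\binom{n}{2}}\to\D(\R^d,\le n)$ into a larger space of disjoint polyhedral tuples. Since $(S^d)^{\binom{n}{2}}$ is compact and $\pi$ is continuous, the image is compact; the subset $\HH(\R^d,\le n)$ of parameters whose tuples actually cover $\R^d$ is the preimage of the maximal value of the total-volume function, hence closed; and $\C(\R^d,\le n)=\pi(\HH(\R^d,\le n))$ is then a continuous image of a compact set.

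What each route buys: the paper's parametrization by hyperplanes is not just a device for compactness---it is the backbone of the semialgebraic structure developed in Section~\ref{semialgsection}, so proving compactness this way is essentially free once that machinery is in place. Your argument, by contrast, is self-contained and needs nothing about polyhedrality or the hyperplane description; it would work verbatim for partitions into convex regions satisfying only Definition~\ref{npartition}, without ever invoking Proposition~\ref{polyhedral}. The volume-balance step you flag as the main obstacle is indeed the heart of your proof, but it is sound: continuity of $\vol_d$ under Hausdorff limits of spherically convex compacta (including the degenerate case where the limit is lower-dimensional) is standard, and the inequality $\sum_i\vol_d(A_i)\ge\vol_d(\bigcup_i A_i)$ together with equality forces a.e.\ disjointness, hence disjoint interiors since $A_i\cap A_j$ is convex. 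One small point worth making explicit is that $\overline{\widehat P_i^\infty}=A_i$ when $\inter(A_i)\neq\emptyset$ (closed convex sets with nonempty interior equal the closure of their interior), which you use implicitly in the covering verification.
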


\begin{proof}
For the proof we introduce additional spaces that will also be important for the discussion of the semialgebraic
structure in the next section. The first one is $(S^d)^{\binom n 2}$, a compact subset of $\R^{(d+1)\times\binom n2}$. 
Each of the points $\c\in (S^d)^{\binom n 2}$ is represented by $\binom n 2$ unit vectors $\c_{ij}\in S^d$ 
for $1\le i<j\le n$. Each point $\c$ can be identified with a central oriented hyperplane arrangement $\A_\c$ in $\R^{d+1}$,
with $\binom{n}{2}$ hyperplanes $H_{i j}$. 
Each hyperplane $H_{ij}\in \A_\c$ is given by the linear equation $\c_{ij}\cdot\x=0$ and comes with an orientation
given by the vector $\c_{ij}\in \R^{d+1}$.
To keep the symmetry of the notation, $H_{ji}$ denotes the same hyperplane $H_{ij}$ 
with the opposite orientation, whose normal vector is $\c_{ji}=-\c_{ij}$. 

Now let $\D(\R^d, \le n)$ be the set of $n$ labeled, disjoint, possibly empty, 
open polyhedral subsets $(Q_1, \ldots, Q_n)$ of $\R^d$. 
We fix the topological structure of $\D(\R^d, \le n)$ in the same way as we did for $\C(\R^d, \le n)$ 
using the metric structure from Definition \ref{metricdmu}. For this, we take the metric on $\D(\R^d, \le n)$ 
where the distance of two lists $(Q_1, \ldots, Q_n)$ and $(Q'_1, \ldots, Q'_n)$ in $\D(\R^d, \le n)$ is given by 
\[ 
                \sum_{i=1}^n\vol _d(\widehat Q_i\vartriangle \widehat Q'_i),
\]
that is, the sum of the measures of the symmetric differences of the projections to $S^d$ of the pairs of 
corresponding polyhedra in both lists. In this way $\C(\R^d, \le n)$ is a subspace of $\D(\R^d, \le n)$, 
with the corresponding subspace topology.

Equivalently, $\D(\R^d, \le n)$ can be considered the space of $n$ labeled, disjoint, possibly empty, 
open spherical polyhedral subsets of $S^d_+$ (the upper hemisphere), since we can map each polyhedron 
$Q_i\subseteq \R^d$ to the spherical polyhedron $\widehat Q_i$. The space of partitions $\C(\R^d, \le n)$ 
can be considered as the subspace of those lists $(Q_1, \ldots, Q_n)\in \D(\R^d, \le n)$ for which the union 
of the closures of the $Q_i$ is the whole $\R^d$.

We define a map $\pi: (S^d)^{\binom n 2} \rightarrow \D(\R^d, \le n)$ obtained by taking for each $\c\in (S^d)^{\binom n 2}$ the polyhedra
\[
	Q_i = \{\x\in\R^{d}: \c_{ij}\cdot \tbinom{1}{\x}<0\text{ for }1\le j\le n, j\neq i\},   
\] 
for $1\le i\le n$ where $\binom{1}{\x}\in \R^{d+1}$ is the vector obtained by adding to $\x$ a first coordinate equal to~$1$.
In other words, each $Q_i$ is determined by intersecting the halfspaces $\c_{ij}\cdot \binom{1}{\x}<0$ determined by the affine hyperplanes $H_{ij}\in \A$ in $\R^d$, where the orientation of the $\c_{ij}$ indicates the side of $H_{ij}$ that must be taken. We recall the convention that $\c_{ji}=-\c_{ij}$, which implies that all $Q_i$ are disjoint. The polyhedral sets $Q_i$ might be empty. 
 
The map  $\pi: (S^d)^{\binom n 2} \rightarrow \D(\R^d, \le n)$ is continuous: If we move the hyperplanes a small amount, the polyhedra projected to the sphere also change slightly and the sum of the $d$-volume of the symmetric differences must be small. 
 
With this we can now complete the proof of Theorem \ref{compactness}. Since the space $(S^d)^{\binom n 2}$ is compact, the image of the continuous map $\pi$ is also a compact space (see e.g.\ \cite[Theorem 26.5]{Munkres1}).
On this image we have a continuous function $f$ to $\R$, given by 
$f(Q_1,\ldots,Q_n)=\sum_{i=1}^n\vol _d(\widehat Q_i)$.

This is a continuous function, so the preimage of the
maximal value, namely the $d$-volume of $S^d_+$,
is a closed subset of a compact space, so it is compact
as well.  This preimage is denoted by $\HH(R^d, \le n)$ (as explained later in Definition \ref{hspace}). We conclude that $\C(R^d, \le n)$ is compact, as it is the image under $\pi$ of the compact space $\HH(R^d, \le n)$.
\end{proof} 

We cannot claim that $\C(\R^{d},n)$ is also compact, since the limit of a sequence of proper partitions might have empty regions. 
On the other hand, any non-proper partition can be obtained as a limit of proper partitions.
To check this, take a non-proper partition and subdivide one of its regions into one big and some small convex pieces,
to get a proper $n$-partition out of it. If the measure of the small pieces goes to zero, 
in the limit we end up at the non-proper partition we started with. 
Therefore we can think of $\C(\R^{d},\le\! n)$ as a compactification of $\C(\R^{d},n)$. 
 
\section{Semialgebraic structure}\label{semialgsection}
 
A subset of $\R^m$ is semialgebraic if it can be described as a finite union of solution sets of systems given by finitely many polynomial equations and strict inequalities on the coordinates of $\R^m$. In this section we prove that each of the spaces $\C(\R^{d},n)$ and $\C(\R^{d},\le\! n)$ is a union of finitely many pieces that can be parameterized by semialgebraic sets. 
 
We refer to Bochnak, Coste \& Roy \cite{bochnakCosteRoy} and Basu, Pollack \& Roy \cite{basu} as general references on semialgebraic sets.
We will use here some basic results about semialgebraic sets, such as the fact that finite 
unions and intersections of semialgebraic sets are semialgebraic, 
and the fact that the complements of semialgebraic sets are again semialgebraic.
Most notably, we will use the Tarski--Seidenberg Theorem, which says that semialgebraic sets are closed under projections.

\begin{theorem}[Tarski--Seidenberg {(see \cite[Theorem 2.2.1]{bochnakCosteRoy})}]\label{tarskiseidenberg} 
        If $X \subset \mathbb{R}^n \times \mathbb{R}^m$ is a semi\-algebraic set, 
        and if $p$ is the projection onto the first $n$ coordinates, then $p(X)\subseteq\mathbb{R}^n$ is also semialgebraic.
\end{theorem}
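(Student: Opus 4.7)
The plan is to prove this by induction on $m$, reducing to the case $m=1$. Any projection $\R^n \times \R^m \to \R^n$ factors as a sequence of projections dropping one coordinate at a time, so once the case $m=1$ is in hand, iterating gives the general statement. Moreover, since a semialgebraic set is by definition a finite union of basic semialgebraic sets and projection commutes with unions, it suffices to treat the projection of a single basic semialgebraic subset of $\R^n \times \R$.

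For such a basic set, fix a finite family $\mathcal{F}$ of polynomials in $\R[x_1,\ldots,x_n,y]$, regarded as polynomials in $y$ with coefficients in $\R[\x]$, together with a prescribed sign condition $\sigma \in \{-,0,+\}^{|\mathcal{F}|}$. The goal becomes to show that the parameterized existential set
\[
    \{\x \in \R^n : \exists\, y \in \R \text{ realizing } \sigma \text{ on } \mathcal{F}\}
\]
is semialgebraic in $\x$. The central tool is parameterized sign determination for univariate polynomials: using Sturm sequences, subresultants, or Thom's encoding of real roots, the existence of a $y$ realizing $\sigma$ can be expressed as a boolean combination of polynomial (in)equalities in the coefficients of the members of $\mathcal{F}$. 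Since those coefficients are themselves polynomials in $\x$, one obtains a semialgebraic description of the projection.

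The main obstacle is handling degenerations as $\x$ varies: leading coefficients of the $y$-polynomials can vanish, the degrees can drop, and the combinatorial structure of the real root configuration can change. To address this I would first stratify $\R^n$ into finitely many semialgebraic pieces on which the vanishing pattern of the successive leading coefficients is constant, and only then apply sign determination uniformly on each stratum. Organising this bookkeeping — for example via a cylindrical algebraic decomposition of $\R^n$ adapted to the coefficient polynomials of $\mathcal{F}$ — expresses $p(X)$ as a finite union of semialgebraic sets, which is again semialgebraic, completing the induction.
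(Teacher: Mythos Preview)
The paper does not prove this theorem: it is quoted as a known result with a reference to \cite[Theorem~2.2.1]{bochnakCosteRoy}, and is then used as a black box throughout Section~\ref{semialgsection}. There is therefore no proof in the paper to compare your proposal against.

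Your outline is the standard route to Tarski--Seidenberg and is correct as a sketch. One small caution: invoking a cylindrical algebraic decomposition of $\R^n$ to organise the bookkeeping risks circularity, since the existence of a CAD is normally proved simultaneously with (or as a strengthening of) projection-closure for semialgebraic sets. Your earlier, more elementary suggestion --- stratifying $\R^n$ by the vanishing pattern of successive leading $y$-coefficients and then running Sturm/subresultant sign determination uniformly on each stratum --- already suffices and is essentially the argument carried out in \cite{bochnakCosteRoy}.
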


We will also use some of the notation introduced in the proof of Theorem \ref{compactness}, 
such as the map $\pi: (S^d)^{\binom n 2} \rightarrow \D(\R^d, \le n)$. 
Note that the space $(S^d)^{\binom n 2}\subset\mathbb{R}^{(d+1)\binom n 2}$ is semialgebraic.

\subsection{Hyperplane description}
 
\begin{definition}[Hyperplane arrangement carrying a partition]
    Let $\P$ be an $n$-partition of~$\R^d$.
An oriented hyperplane arrangement $\A_\c$ for $\c\in (S^d)^{\binom n 2}$  \emph{carries the partition} $\P$ if $\pi(\c)=\P$.
\end{definition}

In other words, an oriented hyperplane arrangement $\A_\c$ for $\c\in (S^d)^{\binom n 2}$  carries the partition $\P=(P_1,\ldots,P_n)$ if the regions $\widehat P_i$ and $\widehat P_j$ are separated by the hyperplane $H_{ij}$, so that $\c_{ij}\cdot\x<0$ for $\x \in \widehat P_i$ and $\c_{ij}\cdot\x>0$ for $\x \in \widehat P_j$.

Following the proof of Proposition \ref{polyhedral}, we can see that for each $n$-partition $\P\in \C(\R^{d},n)$ there is
at least one hyperplane arrangement $\A$ such that it carries it. This hyperplane arrangement is usually not unique.
Similarly, each non-proper partition $\P\in \C(\R^{d},\le\!n)$ is carried by a hyperplane arrangement: 
If a region $P_i$ is empty, any hyperplane $H_{ij}$ that doesn't intersect $P_j$ is good enough to separate these two regions. 
In the case $P_i=\R^d$ we can still take $\c_{ij}=(1,0,\ldots,0)$.  

\begin{example}
In Figure \ref{hyperplanearrangement}(left), we show a partition of $\R^2$ into four regions, 
but to make the example more interesting 
we will consider it as a non-proper partition in $\C(\R^{2},\le\! 5)$, by taking an extra empty region $P_5=\emptyset$.
\begin{figure}[h]\centering
\begin{tikzpicture}

 \coordinate  (A1) at (-2,-1); 
  \coordinate  (A2) at (-2,2); 
  \coordinate  (A3) at (2,2);  
  \coordinate  (A4) at (2,-1); 
  \coordinate  (A5) at (-0.92,0.2);
  \coordinate  (A6) at (0.92,0.5);

   \draw [thick] (A1)--(A5); 
    \draw [thick] (A2)--(A5);
    \draw [thick] (A3)--(A6);
    \draw [thick] (A4)--(A6);
    \draw [thick] (A5)--(A6);
\node [] (r1) at ($0.35*(A1)+0.35*(A2)+0.3*(A5)$) {$P_1$}; 
\node [] (r2) at ($0.2*(A2)+0.2*(A3)+0.3*(A5)+0.3*(A6)$) {$P_2$};
\node [] (r3) at ($0.35*(A3)+0.35*(A4)+0.3*(A6)$) {$P_3$};
\node [] (r4) at ($0.2*(A4)+0.2*(A1)+0.3*(A5)+0.3*(A6)$) {$P_4$};

\node [] (r5) at ($0.8*(A1)+0.2*(A4)-(0,0.5)$) {$P_5=\emptyset$};

\end{tikzpicture}
 \qquad 
\begin{tikzpicture}

 \coordinate  (A1) at (-2,-1); 
  \coordinate  (A2) at (-2,2); 
  \coordinate  (A3) at (2,2);  
  \coordinate  (A4) at (2,-1); 
  \coordinate  (A5) at (-0.92,0.2);
  \coordinate  (A6) at (0.92,0.5);

  \node [above,opacity=0.5] (b1) at ($0.65*(A2)+0.54*(A3)$) {$H_{15}$}; 
  \coordinate  (b2) at ($0.78*(A1)+0.35*(A4)$);
    \draw [opacity=0.5] (b1)--(b2);

  \node [above,opacity=0.5] (b1) at ($0.41*(A2)+0.86*(A1)$) {$H_{25}$}; 
  \coordinate  (b2) at ($0.33*(A3)+0.95*(A4)$);
    \draw [opacity=0.5] (b1)--(b2);

  \node [above,opacity=0.5] (b1) at ($0.22*(A2)+0.97*(A3)$) {$H_{35}$}; 
  \coordinate  (b2) at ($0.6*(A1)+0.63*(A4)$);
    \draw [opacity=0.5] (b1)--(b2);

  \node [above,opacity=0.5] (b1) at ($0.41*(A1)+0.86*(A2)$) {$H_{45}$}; 
  \coordinate  (b2) at ($0.78*(A3)+0.43*(A4)$);
    \draw [opacity=0.5] (b1)--(b2);

   \draw [thick] (A1)--(A5); 
    \draw [thick] (A2)--(A5);
    \draw [thick] (A3)--(A6);
    \draw [thick] (A4)--(A6);
    \draw [thick] (A5)--(A6);
\node [] (r1) at ($0.35*(A1)+0.35*(A2)+0.3*(A5)$) {$P_1$}; 
\node [] (r2) at ($0.2*(A2)+0.2*(A3)+0.3*(A5)+0.3*(A6)$) {$P_2$};
\node [] (r3) at ($0.35*(A3)+0.35*(A4)+0.3*(A6)$) {$P_3$};
\node [] (r4) at ($0.2*(A4)+0.2*(A1)+0.3*(A5)+0.3*(A6)$) {$P_4$};

\node [opacity=0.5] (r5) at ($0.8*(A1)+0.2*(A4)-(0,0.5)$) {$P_5=\emptyset$};

    \draw [dashed] ($2.7*(A5)-1.7*(A1)$)--(A5); 
    \draw [dashed] ($1.8*(A5)-0.8*(A2)$)--(A5);
    \draw [dashed] ($2.1*(A6)-1.1*(A3)$)--(A6);
    \draw [dashed] ($2.3*(A6)-1.3*(A4)$)--(A6);
    \draw [dashed] ($2*(A5)-(A6)$)--(A5);
    \draw [dashed] ($2*(A6)-(A5)$)--(A6);
  \node [above] (b1) at ($0.47*(A2)+0.72*(A3)$) {$H_{13}$}; 
  \coordinate  (b2) at ($0.33*(A1)+0.83*(A4)$);
    \draw [very thick,dashed] (b1)--(b2);
\end{tikzpicture}
 
\caption{Non-proper partition $\P$ in $\C(\R^{2},\le\! 5)$ together with a possible hyperplane arrangement carrying it.}\label{hyperplanearrangement}
\end{figure}
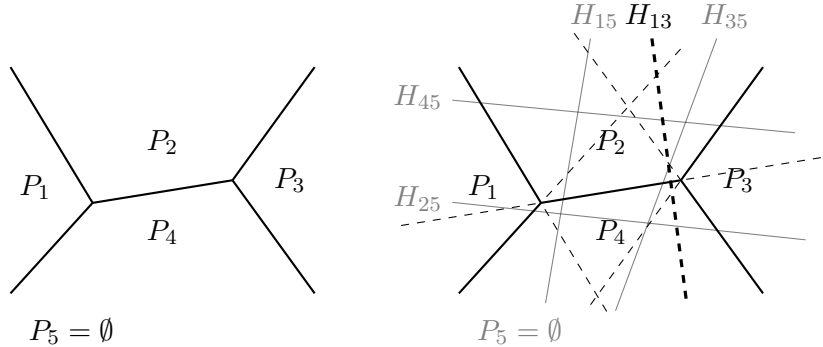

In Figure \ref{hyperplanearrangement}(right) we show an affine picture of a hyperplane arrangement carrying $\P$. 
For adjacent regions $P_i$ and $P_j$, with $\{i,j\}\in A(\P)$, 
there is only one possible hyperplane $H^{\it aff}_{ij}$ that separates them, 
namely the affine span of the points on the intersection of the boundaries.
The extension of these hyperplanes appears on the figure as dashed lines.
For all other hyperplanes there is some freedom to choose them. In the figure, 
there is a label that appears next to each of them. For the hyperplanes involving 
the region $P_5$, it is only necessary that the other region lies entirely on one side of the hyperplane.

The unit vector $\c_{ij}$ is uniquely determined by the hyperplane $H_{ij}$ and the requirement 
that $P_i$ and $P_j$ lie on the correct sides of $H_{ij}$,
unless $P_i=P_j=\emptyset$.  
We remind the reader that an affine hyperplane $H^{\it aff}$ given by the points $\x\in \R^d$ that 
satisfy an equation of the form $\a\cdot\x=b$ for $\a\in \R^d$ and $b\in \R$ is represented projectively 
by its corresponding vector $\c=(-b,a_1,\ldots,a_d)\in \R^{d+1}$ or by the vector $(b,-a_1,\ldots,-a_d)$ 
in case that the opposite orientation is required. This vector can be normalized later. 
\end{example}
  
\begin{definition}[Regions of a hyperplane arrangement] 
 Let $\A_\c$ for $\c\in (S^d)^{\binom n 2}$ be a hyperplane arrangement with hyperplanes  
 $H_{ij}=\{\x\in\R^{d+1}:\c_{ij}\cdot\x=0\}$. Let $\s\in \{+1,-1\}^{\binom{n}{2}}$ be a sign vector 
 with coordinates $s_{ij}\in \{+1,-1\}$ for $i<j$.
 A \emph{region} of the affine hyperplane arrangement $\A_\c^{\it aff}$ is a subset of the form
 \[
 R_\s = \{\x\in\R^{d}:     s_{ij}\c_{ij}\cdot \tbinom{1}{\x} <0\text{ for all }i<j\},
 \] 
 where  $\binom{1}{\x}\in \R^{d+1}$ is the vector obtained by adding a first coordinate equals one to $\x$.
\end{definition}

Thus $R_\s$ is determined by intersecting the halfspaces $s_{ij}\c_{ij}\cdot \binom{1}{\x}<0$ 
determined by the affine hyperplanes $H_{ij}$ of~$\A_\c^{\it aff}$ in $\R^d$, where each coordinate $s_{ij}$ 
indicates the side of $H_{ij}$ that contains $R_\s$, for $1\le i<j\le n$. Regions $R_\s$ 
of a hyperplane arrangement may be empty. 

We symmetrize the notation by setting $s_{ji}=-s_{ij}$ for $i>j$. 
To avoid confusion with the regions of partitions, we always talk about regions $R_\s$  
when we refer to a region of hyperplane arrangements.  
Also, regions $R_\s$ of $\A_\c$  simply denote regions of $\A_\c^{\it aff}$.
 
The \emph{complete graph} $K_n$ is the graph with vertex set $\{1,\ldots,n\}$ and with an edge 
between each pair of vertices. It has $\binom n2$ edges. An \emph{orientation} of $K_n$ is 
obtained by taking the graph $K_n$ and fixing a direction to each edge $e$ of $K_n$, 
by choosing which of the vertices of $e$ is the \emph{tail} and which is the \emph{head}.  
A graph with all its edges oriented is also known as a \emph{directed graph}. 
Each sign vector $\s\in \{+1,-1\}^{\binom{n}{2}}$ generates an orientation $G_\s$ of the complete graph~$K_n$, 
where the edge $ij$ is directed from $i$ to $j$ if $s_{ij}=+1$, and from $j$ to $i$ otherwise, for $1\le i<j\le n$. 
A \emph{source} of $G_\s$ is a vertex $v$ of $K_n$ that is not the head of any of the edges involving $v$ in $G_\s$.
Since the graph $K_n$ is complete, there can be at most one source in the directed graph $G_\s$.

\begin{lemma}\label{carrycondition}
An oriented hyperplane arrangement $\A_\c$ for  $\c\in (S^d)^{\binom n 2}$ carries a (possibly non-proper) 
$n$-partition $\P$ if and only if  for all non-empty regions $R_\s$ of $\A_\c$ the oriented complete graph $G_\s$ has a source.
The partition is proper if and only if for each $i\in\{1,\ldots,\,n\},$ there is at least one non-empty region 
whose source is the vertex $i$.
\end{lemma}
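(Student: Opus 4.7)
The plan is to reduce both parts of the lemma to a clean description of which hyperplane-arrangement regions $R_\s$ are contained in which polyhedron $Q_i=\pi(\c)_i$. Since the $Q_i$ are defined by strict halfspace inequalities with the antisymmetry $\c_{ji}=-\c_{ij}$, they are automatically open, convex, and pairwise disjoint; so ``$\A_\c$ carries a (possibly non-proper) partition'' reduces to the covering condition $\bigcup_{i=1}^n\overline{Q_i}=\R^d$, and ``the partition is proper'' reduces to each $Q_i$ being nonempty.

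First I would establish the key combinatorial translation: a point $\x$ lying in an open region $R_\s$ of $\A_\c^{\it aff}$ belongs to $Q_i$ if and only if $s_{ij}=+1$ for every $j\neq i$ (using the convention $s_{ji}=-s_{ij}$), and this is exactly the condition that $i$ is the unique source of the directed graph $G_\s$. Consequently $R_\s\subseteq Q_{i(\s)}$ whenever $G_\s$ has a source $i(\s)$, while $R_\s$ is disjoint from every $Q_i$ when $G_\s$ has none. Because $K_n$ is complete, at most one source can exist in any $G_\s$, so the assignment $\s\mapsto i(\s)$ is well defined on the sourced sign vectors and partitions the nonempty regions among the $Q_i$.

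With this translation in hand both equivalences follow quickly. For the first one: if every nonempty $R_\s$ is sourced, taking closures yields $\R^d=\bigcup_\s\overline{R_\s}\subseteq\bigcup_i\overline{Q_i}$; conversely, if some nonempty $R_\s$ has no source, then for each $i$ there is a $j\neq i$ with $s_{ij}=-1$, so the strict inequality $\c_{ij}\cdot\binom{1}{\x}>0$ holds throughout $R_\s$, showing $R_\s\cap\overline{Q_i}=\emptyset$ for every $i$ and hence $\bigcup_i\overline{Q_i}\neq\R^d$. For the second equivalence, the partition is proper iff every $Q_i$ is nonempty; since $Q_i$ is open it is nonempty iff it meets the complement of the hyperplane union, iff it contains some open region $R_\s$, which by the translation is exactly a nonempty $R_\s$ whose source is $i$.

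The only step I expect to require genuine care is the ``no source'' direction of the first equivalence: one must verify that a sourceless nonempty $R_\s$ really produces a gap in $\bigcup_i\overline{Q_i}$ rather than being quietly absorbed by the closures of adjacent polyhedra. The observation that each witnessing inequality $\c_{ij}\cdot\binom{1}{\x}>0$ is a strict open condition valid on all of $R_\s$, and hence incompatible with membership in any $\overline{Q_i}$, is what makes this argument go through; the rest of the proof is direct bookkeeping with sources in $G_\s$.
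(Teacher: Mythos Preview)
Your proposal is correct and follows essentially the same approach as the paper: both proofs hinge on the translation that $R_\s\subseteq Q_i$ precisely when $i$ is a source of $G_\s$, and then read off the covering and properness statements from this. Your treatment of the ``no source'' direction is a bit more explicit than the paper's (you show directly that a sourceless nonempty $R_\s$ misses every $\overline{Q_i}$, whereas the paper argues the contrapositive in one line), but this is the same argument.
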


\begin{proof}
If $\P$ is an $n$-partition carried by $\A_\c$, all non-empty regions $R_\s$ of $\A_\c$ must be contained 
in some fixed region $P_i$  of~$\P$.  
If $R_\s\subseteq P_i$, then we have that $s_{ij}=+1$ for all $j\neq i$, so $i$ is a source in $G_\s$. 

On the other hand, if the directed graphs of all non-empty regions $R_\s$ have a source, 
then we obtain an $n$-partition by taking 
\[
        P_i =\bigcap_{j\neq i}\{\x\in \R^{d}:\c_{ij}\cdot\binom{1}{\x}\le 0\}.
\]  
The regions $P_i$ are clearly disjoint, and their union cover all regions $R_\s$ of the hyperplane arrangement, 
since $R_\s\subseteq P_i$ whenever $i$ is the unique source of $G_\s$. Therefore the union of the closure 
of the regions must be the whole $\R^d$. The regions $P_i$ as defined might still be empty, but if there 
is a non-empty region $R_\s$ in $\A_\c$ with $G_\s$ having as source the vertex $i$ for each $i$, then 
$R_\s\subseteq P_i$ is non-empty and the partition is proper.
\end{proof}

\begin{lemma}\label{rs}
For any $\s\in\{+1,-1\}^{\binom{n}{2}}$, the set of points $\c\in (S^d)^{\binom{n}{2}}$ for which
the region $R_\s$ in the hyperplane arrangement $\A_\c$ is empty is semialgebraic. 
The set of points $\c$ such that the region $R_\s$ in the hyperplane arrangement $\A_\c$ 
is non-empty is also semialgebraic.
\end{lemma}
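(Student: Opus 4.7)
My plan is to express the set of $\c$ with $R_\s \neq \emptyset$ as the projection of an obviously semialgebraic incidence set, and then apply the Tarski--Seidenberg Theorem (Theorem \ref{tarskiseidenberg}). The set of $\c$ with $R_\s = \emptyset$ will then be its complement in $(S^d)^{\binom{n}{2}}$, which is semialgebraic because complements of semialgebraic sets are semialgebraic.

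Concretely, I would consider the \emph{incidence set}
\[
    X_\s \ = \ \bigl\{(\c,\x)\in (S^d)^{\binom n 2}\times\R^d \ :\ s_{ij}\,\c_{ij}\cdot\tbinom{1}{\x}<0 \text{ for all } 1\le i<j\le n \bigr\}.
\]
The ambient space $(S^d)^{\binom n 2}\times\R^d \subset \R^{(d+1)\binom n 2}\times \R^d$ is semialgebraic, since each factor $S^d$ is cut out by the polynomial equation $\|\c_{ij}\|^2=1$. Each defining inequality $s_{ij}\,\c_{ij}\cdot\binom{1}{\x}<0$ is a strict polynomial inequality in the coordinates of $\c_{ij}$ and $\x$ (bilinear, hence polynomial). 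Therefore $X_\s$ is semialgebraic.

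Now let $p\colon (S^d)^{\binom n 2}\times\R^d\to (S^d)^{\binom n 2}$ be the projection onto the first factor. By construction, a point $\c$ lies in $p(X_\s)$ if and only if there exists $\x\in \R^d$ satisfying every strict inequality $s_{ij}\c_{ij}\cdot\binom{1}{\x}<0$, which is exactly the condition that $R_\s$ is non-empty in the arrangement $\A_\c$. By Theorem \ref{tarskiseidenberg}, the image $p(X_\s)$ is semialgebraic, proving the second claim. The first claim follows by taking the complement $(S^d)^{\binom n 2}\setminus p(X_\s)$, which is again semialgebraic.

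There is no real obstacle here beyond verifying that the defining conditions are polynomial (which they are, being bilinear in $(\c,\x)$) and that the ambient space carries a semialgebraic description; the result is then an immediate application of Tarski--Seidenberg together with closure of the semialgebraic class under complementation.
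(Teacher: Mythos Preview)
Your proposal is correct and follows essentially the same approach as the paper: build an incidence set in the product with the extra variable $\x$, project via Tarski--Seidenberg to get the non-empty locus, then take the complement for the empty locus. Your version is in fact slightly more precise than the paper's, since you use $\x\in\R^d$ with $\binom{1}{\x}$ (exactly matching the definition of $R_\s$), whereas the paper writes $\x\in\R^{d+1}$.
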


\begin{proof}
The region $R_\s$ is non-empty if and only if there is some $\x\in \R^{d+1}$ such that $s_{ij}\c_{ij}\cdot \x<0$ 
for each pair $i<j$. We can add the coordinates of $\x$ as slack variables and construct 
a semialgebraic set $X$ on the coordinates of $\c_{ij}$ for $1\le i<j\le n$ and of $\x$, 
so that all inequalities $s_{ij}\c_{ij}\cdot \x<0$ are satisfied. The parameterization of 
the set of all hyperplane arrangements $\A_\c$ with $R_\s\neq \emptyset$ can be obtained 
as a projection of $X$ to the coordinates $\c\in (S^d)^{\binom n 2}$ and by Theorem~\ref{tarskiseidenberg}, 
we conclude that the set of arrangements with $R_\s\neq \emptyset$ is semialgebraic. 

Since the complement of a semialgebraic set is semialgebraic, the set of arrangements such that $R_\s= \emptyset$ is semialgebraic. Alternatively, we can use a suitable version of the Farkas Lemma (see \cite[Section 1.4]{Z}) to get another semialgebraic description of this set. 
\end{proof}
 
\begin{definition}[The spaces $\HH(\R^d,\le\! n)$ and $\HH(\R^d,n)$]\label{hspace}
We denote by $\HH(\R^d,\le\! n)$ the space of all $\c \in (S^d)^{\binom n 2}$ such that the hyperplane arrangement $\A_\c$ carries 
a possibly non-proper $n$-partition of $\R^d$. The subset of $\HH(\R^d,\le\! n)$ corresponding to hyperplane arrangements carrying a proper $n$-partition is denoted as $\HH(\R^d,n)$. 
\end{definition}
 
We have the following chain of inclusions.
\[ \HH(\R^d,n) \ \subseteq\ \HH(\R^d,\le\!  n) \ \subseteq\ (S^d)^{\binom n 2} \subseteq\  \R^{(d+1)\times\binom n2}.
\] 

\begin{theorem}\label{thm:semialgrebraic_spaces}
The spaces $\HH(\R^d,\le\! n)$ and $\HH(\R^d,n)$ are semialgebraic sets.
\end{theorem}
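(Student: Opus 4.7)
The plan is to combine the combinatorial criterion of Lemma \ref{carrycondition} with the semialgebraic description of emptiness of regions given by Lemma \ref{rs}. The sign vectors $\s\in\{+1,-1\}^{\binom n2}$ form a finite index set, and each condition ``$R_\s=\emptyset$'' or ``$R_\s\neq\emptyset$'' cuts out a semialgebraic subset of $(S^d)^{\binom n2}$. Since finite Boolean combinations of semialgebraic sets are semialgebraic, the result reduces to expressing $\HH(\R^d,\le\!n)$ and $\HH(\R^d,n)$ as explicit finite Boolean combinations.

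First, for $\HH(\R^d,\le\!n)$: by Lemma \ref{carrycondition}, $\c$ lies in $\HH(\R^d,\le\!n)$ if and only if for every $\s$ such that the oriented complete graph $G_\s$ has \emph{no} source, the corresponding region $R_\s$ in $\A_\c$ is empty. Writing $\Sigma_0\subseteq\{+1,-1\}^{\binom n2}$ for the (finite) set of such ``sourceless'' sign vectors, I would therefore set
\[
\HH(\R^d,\le\!n)\ =\ (S^d)^{\binom n2}\ \cap\ \bigcap_{\s\in\Sigma_0}\bigl\{\c\in(S^d)^{\binom n2}:R_\s(\c)=\emptyset\bigr\}.
\]
Each set in the intersection is semialgebraic by Lemma \ref{rs}, and $(S^d)^{\binom n2}$ is cut out by the equations $\|\c_{ij}\|^2=1$, hence is semialgebraic as well. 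A finite intersection of semialgebraic sets is semialgebraic, so $\HH(\R^d,\le\!n)$ is semialgebraic.

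Next, for $\HH(\R^d,n)$: by the second part of Lemma \ref{carrycondition}, properness adds the requirement that for every $i\in\{1,\dots,n\}$ some non-empty region $R_\s$ has $i$ as its source. For each $i$, let $\Sigma_i$ be the finite set of sign vectors $\s$ whose source in $G_\s$ is exactly $i$. Then
\[
\HH(\R^d,n)\ =\ \HH(\R^d,\le\!n)\ \cap\ \bigcap_{i=1}^{n}\bigcup_{\s\in\Sigma_i}\bigl\{\c\in(S^d)^{\binom n2}:R_\s(\c)\neq\emptyset\bigr\}.
\]
Again each inner set is semialgebraic by Lemma \ref{rs}, and the finite union/intersection operations preserve this property, so $\HH(\R^d,n)$ is semialgebraic.

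The only step requiring actual work is Lemma \ref{rs}, which has already been established (via Tarski--Seidenberg applied to the slack-variable description of non-emptiness); the remainder is the routine but essential translation between the geometric condition ``$\A_\c$ carries a (proper) partition'' and the purely combinatorial condition on sources of $G_\s$, which Lemma \ref{carrycondition} supplies. The conceptual heart of the argument is that the carrying condition can be decided region by region using only the finite data of which sign vectors produce non-empty cells, and this reduction is exactly what makes both sets fit into the scope of the Tarski--Seidenberg principle.
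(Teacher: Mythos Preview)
Your proof is correct and follows essentially the same approach as the paper's own argument: both use Lemma~\ref{carrycondition} to translate the carrying condition into constraints on which regions $R_\s$ must be empty (respectively non-empty), and then invoke Lemma~\ref{rs} together with closure of semialgebraic sets under finite Boolean combinations. Your presentation is in fact slightly more explicit, writing out the finite intersections and unions over the sets $\Sigma_0$ and $\Sigma_i$, but the underlying logic is identical.
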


\begin{proof} 
By Lemma \ref{carrycondition}, a hyperplane arrangement $\A_\c$ for $\c\in (S^d)^{\binom n 2}$ carries an $n$-partition $\P$ if and only if  for all regions $R_\s$ in $\A_\c$ the oriented graph $G_\s$ have a source. 
Therefore, we need to characterize all hyperplane arrangements $\A_\c$ such that all regions $R_\s$ of $\A_\c$ are empty for all sign vector $\s$ in $S=\{\s\in \{+1,-1\}^{\binom{d}{2}}:G_\s \text{ has no source}\}$. By Lemma~\ref{rs} and the (obvious) fact that finite intersections of semialgebraic sets are semialgebraic, we find that $\HH(\R^{d},\le\! n)$ is a semialgebraic set over the coordinates of $\c_{ij}$ as variables.
 
Also the set $\HH(\R^{d},n)$ of hyperplane arrangements carrying a proper $n$-partition, where at least one region $R_\s$ has source $i$ for each $i\le n$, is  semialgebraic, again by Lemma \ref{rs} and the fact that finite unions and intersections of semialgebraic sets are again semialgebraic.
\end{proof}

From Theorem \ref{thm:semialgrebraic_spaces} we can see that $\HH(\R^d,\le n)$ is the union of all sets of arrangements with an adjacency graph that satisfies the source conditions specified on Lemma \ref{carrycondition}.

\begin{theorem}\label{mainsemialgebraictheorem}
The space $\C(\R^{d},\le\! n)$ is the union of finitely many subspaces 
indexed by adjacency graphs $A(\P)$, which can be parameterized as semialgebraic sets. The same statement is true for the space $\C(\R^{d},n)$.  
\end{theorem}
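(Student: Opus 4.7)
The plan is to decompose $\HH(\R^d,\le\!n)$ according to the adjacency graph of the induced partition and push this decomposition down to $\C(\R^d,\le\!n)$ via the continuous surjection $\pi$. For each graph $G$ on vertex set $\{1,\ldots,n\}$, I define
\[
    \HH_G(\R^d,\le\!n) \ =\ \{\c \in \HH(\R^d,\le\!n) : A(\pi(\c)) = G\},
    \qquad \C_G(\R^d,\le\!n)\ =\ \pi(\HH_G(\R^d,\le\!n)).
\]
Since there are at most $2^{\binom n 2}$ graphs on $n$ labelled vertices, this gives a finite decomposition $\C(\R^d,\le\!n) = \bigcup_G \C_G(\R^d,\le\!n)$, and each piece is parameterized by the set $\HH_G$. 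The theorem therefore reduces to proving that every $\HH_G$ is semialgebraic.

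First I would encode adjacency as a first-order condition on the coefficients $\c$. For $\c \in \HH(\R^d,\le\!n)$ with induced partition $\pi(\c) = (P_1,\ldots,P_n)$, the pair $\{i,j\}$ belongs to $A(\pi(\c))$ exactly when $\overline{P_i}\cap\overline{P_j}$ has dimension $d-1$; equivalently, when there exist $d$ affinely independent witness points $\x_1,\ldots,\x_d \in H_{ij}^{\it aff}$ satisfying $\c_{ik}\cdot\binom{1}{\x_\ell} \le 0$ for all $k\ne i$ and $\c_{jk}\cdot\binom{1}{\x_\ell}\le 0$ for all $k\ne j$, for each $\ell=1,\ldots,d$. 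Introducing the coordinates of the $\x_\ell$ as slack variables alongside $\c$ cuts out a semialgebraic set in $(S^d)^{\binom n 2} \times \R^{d^2}$, and by Tarski--Seidenberg (Theorem \ref{tarskiseidenberg}) its projection $\Phi_{ij} \subseteq (S^d)^{\binom n 2}$ to the $\c$-coordinates is semialgebraic.

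Next I would intersect the semialgebraic set $\HH(\R^d,\le\!n)$ from Theorem \ref{thm:semialgrebraic_spaces} with $\Phi_{ij}$ for each edge $\{i,j\}\in G$ and with the complement $(S^d)^{\binom n 2}\setminus\Phi_{ij}$ for each non-edge. This exhibits $\HH_G(\R^d,\le\!n)$ as a finite Boolean combination of semialgebraic sets, hence as a semialgebraic set. The same argument, starting from $\HH(\R^d,n)$ rather than $\HH(\R^d,\le\!n)$, handles the proper case and yields the decomposition of $\C(\R^d,n)$.

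The main obstacle is the first step: translating the geometric notion of adjacency---a statement about the dimension of an intersection of closures of possibly unbounded polyhedral regions---into a clean first-order formula over the reals. Care is needed with unbounded and potentially degenerate regions, and with properly handling strict versus non-strict inequalities, so that the semialgebraic condition really characterizes $\{i,j\}\in A(\pi(\c))$ rather than some weaker or stronger incidence. Once this equivalence is firmly established, Tarski--Seidenberg together with the closure of semialgebraic sets under finite unions, intersections, and complements delivers the conclusion.
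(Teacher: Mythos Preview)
Your overall architecture matches the paper's: stratify $\HH(\R^d,\le\!n)$ by adjacency graph, show each stratum $\HH_G$ is semialgebraic, then push down via~$\pi$. The paper differs in two respects. For the adjacency test, instead of your geometric witness-point criterion the paper uses a combinatorial one: $\{i,j\}\in A(\pi(\c))$ iff there exist sign vectors $\s,\s'$ differing only in the $ij$-entry, with respective sources $i$ and $j$, such that $R_\s,R_{\s'}\neq\emptyset$. This plugs directly into Lemma~\ref{rs} and automatically handles the empty-region degeneracy you flag (nonemptiness of $R_\s$ with source $i$ forces $P_i\neq\emptyset$), whereas your closed-halfspace formulation can give false positives when $P_i=\emptyset$ but the associated closed polyhedron is still $(d{-}1)$-dimensional. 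Second, the paper does not stop at ``$\HH_G$ parameterizes $\C_G$'': it observes that for a fixed adjacency graph $G$ the map $\pi|_{\HH_G}$ is literally the coordinate projection that forgets the $\c_{ij}$ for non-edges $\{i,j\}\notin G$ (those separating hyperplanes are undetermined by the partition, while the $\c_{ij}$ for edges are), so Tarski--Seidenberg gives that $\C_G$ is itself a semialgebraic subset of $\prod_{\{i,j\}\in G}S^d$. You omit this step; without it you have only exhibited a surjection from a semialgebraic set, not a semialgebraic parameterization of $\C_G$ in its own right.
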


\begin{proof}
The map $\pi:\HH(\R^{d},\le\! n)\rightarrow \C(\R^{d},\le\! n)$ is a surjective continuous map taking each oriented hyperplane arrangement $\A$ in $\HH(\R^{d},\le\! n)$ to its corresponding partition.
The pieces of $\C(\R^{d},\le \!n)$ are given by the  
partitions in $\C(\R^{d},\le\! n)$ that share the same adjacency graph $A(\P)$, for any given $n$-partition $\P$. Each of these pieces is denoted as $\C_{A(\P)}(\R^{d},n)$ for each partition $\P$, and the inverse image $\pi^{-1}(\C_{A(\P)}(\R^{d},n))$ is denoted as $\HH_{A(\P)}(\R^{d},n)$. 

To see that $\HH_{A(\P)}(\R^{d},n)$ is a semialgebraic set, we take the description of $\HH(\R^{d},n)$ and add extra restrictions to express the fact that certain hyperplanes do not determine any $(d-1)$-face of the partition. These extra restrictions are described in what follows. 

A pair $\{i,j\}$ is in $A(\P)$ for $\P=\pi(\A)$ if and only if there are $\s,\,\s'\in \{+1, -1\}^{\binom{n}{2}}$ with exactly the same entries, except only by the entry $s_{ij}=-s'_{ij}$, with oriented graphs $G_\s$, $G_{\s'}$ having sources $i$ and $j$ respectively, so that the regions $R_\s$, $R_{\s'}$  are non-empty.  

Using Lemma \ref{rs} we find that the subset of arrangements $\A'\in \HH(\R^{d},n)$ with $\{i,j\}\in A(\pi(\A))$ for a given $\A\in \HH(\R^{d},n)$ is semialgebraic, since it is the union over all pairs $\s$, $\s'$  that differ only in the $ij$-coordinate and with respective graphs sources $i$ and $j$ of the subsets of $\HH(\R^{d},n)$ where $R_\s$ and $R_{\s'}$ are non-empty. The complement of those subsets, that represent hyperplane arrangements with $\{i,j\}\notin A(\pi(\A))$ are also semialgebraic.  

Finally $\HH_{A(\P)}(\R^{d},n)$ is the intersection of subsets of $\HH(\R^{d},n)$ where $\{i,j\}\in A(\pi(\A))$ for $\{i,j\}\in A(\P)$ and $\{k,\ell\}\notin A(\pi(\A))$ for $\{k,\ell\}\notin A(\P)$ and thus it is also a semialgebraic set. Since the map $\pi:\HH(\R^{d},\le\! n)\rightarrow \C(\R^{d},\le\! n)$ is a projection obtained by deleting the coordinates $\c_{ij}$ of the hyperplanes $H_{ij}$ for $\{i,j\}\notin A(\P)$, by Theorem \ref{tarskiseidenberg} we conclude that $\C_{A(\P)}(\R^{d},n)$ is a semialgebraic set on the coordinates of the vectors $\c_{ij}$ for $\{i,j\}\in A(\P)$ and $\C(\R^{d},\le\! n)$ is a union of semialgebraic pieces. 

If there are two or more non-empty regions in $\P$, the vertices of $A(\P)$ contained in at least one edge  correspond to the non-empty regions of~$\P$. 
Therefore, we can obtain $\C(\R^{d},n)$ as the union of the semialgebraic pieces of the form $\C_{A(\P)}(\R^{d},n)$ where $A(\P)$ is a connected graph on the vertices from~$1$ to~$n$.
\end{proof}

Only knowing these semialgebraic pieces it is not enough to reconstruct the spaces  $\C(\R^{d},n)$ and $\C(\R^{d},\le\! n)$. 
We also need the topological structure induced by the metric given in Section~\ref{metricsection} to know how to glue the different semialgebraic pieces of the form $\C_{A(\P)}(\R^{d},n)$ in order to obtain the spaces of $n$-partitions $\C(\R^{d},n)$ and $\C(\R^{d},\le\! n)$. 

On each semialgebraic piece $\C_{A(\P)}(\R^{d},n)$ we have a topological structure 
by seeing it as a subset of $\R^{(d+1)\times E}$ given by the parameterization through the $\c_{ij}$, 
where $E$ is the number of edges in $A(\P)$. 
This topological structure is equivalent as the one obtained as a subset of 
$\C(\R^{d},\le\! n)$: To see this, notice that a sequence of partitions $(\P^k)_{k\in \N}$  
in $\C_{A(\P)}(\R^{d},n)$ converges to a partition $\P$ in the $\delta_\mu$-topology if and only if 
each sequence of coordinates $c^k_{ij}$ of the parameterizations of $\P^k$ for $\{i,j\}\in A(\P)$ 
converge to the coordinates of $\c_{ij}$.

\subsection{Node systems and combinatorial types}\label{pointedsection}

Pointed partitions are an important class of partitions, where every face is completely determined 
by its set of vertices.  For general partitions the same doesn't hold and we need to define node systems 
to get similar properties for any $n$-partition (Definition \ref{nodesystem}).

\begin{definition}[Pointed partitions]\label{pointedp}  
        An $n$-partition $\P=(P_1,\dots,P_n)$ of $\R^d$ is \emph{pointed} if for each region $P_i$ 
        the cone $C_i$ is pointed.
\end{definition}

Recall from the comments after Definition \ref{nonproperpartitions} 
that we exceptionally defined $C_i=\emptyset$ in the case a region $P_i=\emptyset$. 
Thus pointed partitions must be proper.
Here we state some simple results about pointed partitions, which are proved in~\cite{thesis}.

\begin{proposition}\label{pointedfacesbyvertices}
If $\P$ is a pointed $n$-partition, then every face $F_I$ of~$\P$ can be obtained as 
the spherical convex hull of all vertices in $F_I$.
\end{proposition}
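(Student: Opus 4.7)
\emph{Proof sketch.} The plan is to reduce the claim to the classical Minkowski--Weyl theorem for pointed polyhedral cones, applied to $C_I=\cone(F_I)\subseteq\R^{d+1}$, and then to check that the extreme rays of $C_I$ correspond precisely to the vertices of~$\P$ that lie in~$F_I$.

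First, I would verify that $C_I$ is a \emph{pointed} polyhedral cone. Since $I\in\I(\P)$ means $I\neq\{\infty\}$, there is some finite index $i_0\in I\cap\{1,\dots,n\}$, hence $C_I=\bigcap_{i\in I}C_i\subseteq C_{i_0}$. A subcone of a pointed cone is pointed, so $C_I$ is pointed. It is polyhedral because each $C_i$ is (by Proposition \ref{polyhedral} together with the definition of the spherical representation). By Minkowski--Weyl, $C_I$ equals the conical hull of its extreme rays.

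The main step is then to identify these extreme rays with vertices of the partition contained in~$F_I$. Let $\rho$ be an extreme ray of~$C_I$, and pick $\v\in\rho\cap S^d$. Set $J=I(\v)\supseteq I$, so $\v\in F_J$ and $C_J\subseteq C_I$. I claim $\dim C_J=1$, so that $F_J=\{\v\}$ is a vertex of~$\P$. Indeed, if $C_J$ had dimension $\ge 2$, then either $\v$ lies in the relative interior of some higher-dimensional face of $C_J$, or $\v$ lies on the boundary. The boundary case is excluded because points on the boundary of $F_J$ have index set strictly larger than $J$, contradicting $J=I(\v)$. So $\v$ is in the relative interior of $C_J$, and can therefore be written as $\v=\w+\w'$ with $\w,\w'\in C_J\subseteq C_I$ and $\w,\w'$ linearly independent, contradicting the extremality of the ray $\rho$ in~$C_I$. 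Hence $C_J=\rho$ and $F_J=\{\v\}$ is a vertex of~$\P$ lying in~$F_I$.

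Conversely, any vertex $F_J=\{\v\}$ of~$\P$ with $J\supseteq I$ gives a one-dimensional face of~$C_I$, namely $C_J=\R_{\ge 0}\v$, which is thus an extreme ray of~$C_I$; this uses the (standard) fact that the face lattice of the intersection cone $C_I$ is exhausted by the cones $C_{J}$ with $J\supseteq I$, since the facet-defining hyperplanes of~$C_I$ come from the separating hyperplanes of the partition. Combining the two directions, the extreme rays of $C_I$ are precisely the $C_J$ with $J\supseteq I$ and $\dim C_J=1$. Passing from cones in $\R^{d+1}$ to subsets of~$S^d$ (so that conical hulls become spherical convex hulls and extreme rays become vertices), Minkowski--Weyl yields $F_I=\sconv\{\v:\v\text{ is a vertex of }\P\text{ with }\v\in F_I\}$, which is the claim. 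The main obstacle is the direction in step~3 showing $\dim C_J=1$; once this is in place, the rest is an application of standard polyhedral convexity.
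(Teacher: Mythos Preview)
Your argument is essentially correct and is a clean reduction to Minkowski--Weyl for pointed polyhedral cones. One remark: the ``converse'' paragraph is not needed and its justification is shaky. It is \emph{not} true in general that the polyhedral face lattice of $C_I$ coincides with $\{C_J:J\supseteq I\}$; the paper introduces \emph{subfaces} (Definition~\ref{def:subfaces}) exactly because a polyhedral face of $C_I$ may be a union of several partition faces (see Example~\ref{example:subfaces}). Fortunately you do not need this: once you know every extreme ray of $C_I$ is $C_J$ for some vertex $F_J\subseteq F_I$, you already get
\[
C_I=\cone(\text{extreme rays})\subseteq\cone\{\v:\v\text{ a vertex in }F_I\}\subseteq C_I,
\]
hence equality. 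The other small point is the claim ``$I(\v)=J$ forces $\v\in\relint F_J$''; this is true and follows from the CW structure of the faces (Theorem~\ref{cwcomplex}, applicable since pointed partitions are essential), but it is worth saying so explicitly.

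As for comparison: the paper does not prove this proposition in the text (it defers to~\cite{thesis}), but it later proves the more general Lemma~\ref{convexhullofnodes} (every face is the spherical convex hull of the nodes it contains) by induction on $\dim F$, and then observes in Proposition~\ref{samecomplexforpointed} that for pointed partitions the nodes are exactly the vertices. So the paper's route is an inductive geometric argument that works uniformly for arbitrary partitions via node systems, while yours is a direct one-shot application of the structure theory of pointed polyhedral cones. Your approach is shorter and conceptually cleaner for the pointed case; the paper's inductive approach is what generalises to non-pointed partitions, where $C_I$ need not be pointed and Minkowski--Weyl alone does not suffice.
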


\begin{lemma}\label{pointedareessential} 
        Pointed $n$-partitions are essential. 
\end{lemma}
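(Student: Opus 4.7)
My plan is to argue by contradiction: assume $\P$ is a pointed $n$-partition that is \emph{not} essential, and extract from this a single region whose cone $C_j$ contains a line, contradicting pointedness. Since $\P$ is not essential, the face $F_{I(\mathbf{0})}=\bigcap_{i=1}^{n}\overline{\widehat{P_i}}\cap\overline{\widehat{P_\infty}}$ is non-empty, so I pick some point $\x\in F_{I(\mathbf{0})}\subseteq S^d$. In particular $\x\neq\mathbf{0}$, and $\x\in\overline{\widehat{P_i}}\subseteq C_i$ for every $i\in\{1,\dots,n,\infty\}$.

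The first key step is to locate $\x$ on the equator. Since $\widehat{P_\infty}=S^d_-$, the closure $\overline{\widehat{P_\infty}}=\overline{S^d_-}$ forces the zeroth coordinate $x_0\le 0$. On the other hand, each $\widehat{P_i}$ for $i\le n$ is contained in the open upper hemisphere $S^d_+$, so $\overline{\widehat{P_i}}\subseteq\overline{S^d_+}$ and $x_0\ge 0$ follows from $\x\in\overline{\widehat{P_1}}$. Hence $x_0=0$, i.e.\ $\x\in S^d_0$, and consequently $-\x\in S^d_0\subseteq\overline{S^d_+}$ as well.

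The next step is to show that some closure $\overline{\widehat{P_j}}$ with $j\le n$ also contains $-\x$. Because $\widehat{P_1},\dots,\widehat{P_n}$ partition the open upper hemisphere $S^d_+$ and only finitely many regions are involved, a standard pigeonhole/compactness argument shows that $\bigcup_{i\le n}\overline{\widehat{P_i}}=\overline{S^d_+}$: any point of the equator is a limit of points of $S^d_+$, which by pigeonhole lie in finitely many $\widehat{P_i}$, and so accumulates in some $\overline{\widehat{P_j}}$ with $j\le n$. Applying this to $-\x\in S^d_0$ produces the desired $j$.

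Finally, since $\x\in F_{I(\mathbf{0})}$ we also have $\x\in\overline{\widehat{P_j}}$, so both $\x$ and $-\x$ lie in $\overline{\widehat{P_j}}\subseteq C_j$. Because $C_j$ is a convex cone, it then contains the whole line $\R\x$, so $C_j\cap(-C_j)\supseteq\R\x\neq\{\mathbf{0}\}$. This contradicts the pointedness of $C_j$, completing the argument. The main subtlety I expect to have to justify carefully is the covering statement $\bigcup_{i\le n}\overline{\widehat{P_i}}=\overline{S^d_+}$, which is what transports the obstruction at infinity (the point $-\x$ on the equator) back into one of the actual regions; everything else is essentially bookkeeping using the definitions of $C_i$, $\widehat{P_\infty}$, and essentiality.
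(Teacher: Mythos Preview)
Your argument is correct. The paper itself does not give a proof of this lemma, deferring instead to \cite{thesis}, so there is no in-paper argument to compare against; your contradiction via an antipodal pair on the equator is the natural route.

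One small remark on the step you flagged as the main subtlety: the covering $\bigcup_{i\le n}\overline{\widehat{P_i}}=\overline{S^d_+}$ is even easier than your pigeonhole sketch suggests. Since $\widehat\P$ is a spherical partition, $\bigcup_{i\le n+1}\overline{\widehat{P_i}}=S^d$, and no point of $S^d_+$ lies in $\overline{\widehat{P_\infty}}=\overline{S^d_-}$; hence $S^d_+\subseteq\bigcup_{i\le n}\overline{\widehat{P_i}}$. The right-hand side is a finite union of closed sets, hence closed, so it already contains $\overline{S^d_+}$. (Incidentally, the open regions $\widehat{P_i}$ need not cover $S^d_+$ themselves --- only their closures do --- so your phrase ``partition the open upper hemisphere'' is slightly loose, but it plays no role once you argue this way.)
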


The converse of Lemma \ref{pointedareessential} is not true:  Example~\ref{ex1}
shows an example of a partition that is essential but not pointed.

Now we define the node systems of an $n$-partition, in order to get that every face is the spherical convex hull of its corresponding nodes and so that for a pointed partition $\P$ the nodes coincide with the vertices of~$\P$. First we need to introduce the half-linear faces of a partition.

\begin{definition}[Half-linear faces]
A face $F$ of a partition $\P$ is \emph{half-linear} if it is the intersection of $S^d$ 
with a linear subspace of $\R^{d+1}$ and a unique closed halfspace given by a linear inequality. 
The set of half-linear faces of a partition is denoted as $\F^H(\P)$.
\end{definition}

If a face $F$ is half-linear, then it has a unique linear subface $F'$ in its relative boundary. 
The subface $F'$ is the union of some faces of~$\P$, and is the intersection of a linear subspace with $S^d$.
 $F'$ cannot have any boundary since it is topologically a sphere (of dimension $\dim F'=\dim F-1$) 
and is the union of some faces at infinity of~$\P$. Since $\hat P_{\infty}$ is not a face of~$\P$, 
in particular it is not a half-linear face of~$\P$ (but it is a half-linear face 
of the spherical partition $\hat\P$).

The only face $F_I$ of~$\P$ such that its corresponding cone $C_{I}$ is a linear subspace 
is the minimal face $F_{I({\bf 0})}$ (see \cite[Lemma 3.22]{thesis}). This face has no boundary and no subfaces. All faces covering  $F_{I({\bf 0})}$ in the face poset are half-linear. If a partition is essential, all vertices are half-linear faces. 

\begin{example}\label{halflinearexample}
For the $4$-partition $\P$ of Example \ref{ex1}, every vertex is half-linear (there are four of them). 
Besides, there are two more half-linear faces in the figure, namely the faces $F_{34}$ and $F_{4\infty}$. 
For these two $1$-faces, there is a unique linear subface that covers the relative boundary 
and is the union of two vertices of~$\P$.
\end{example}
\begin{example}\label{halflinearexample2}
A $4$-partition $\P'$ of the plane given by four regions separated by three parallel lines 
is non-essential: Its minimal face $F_{I({\bf 0})}(\P')$ consists of 
two antipodal points. Here all $1$-faces are half-linear, since they cover~$F_{I({\bf 0})}(\P')$ and there are no other half-linear faces on this partition.   
\end{example}

\begin{definition}[Node systems, nodes]\label{nodesystem}
Let $\P$ be a partition in $\C(\R^{d},\le\! n)$. 
If $\P$ is essential, a \emph{node system} $N$ of~$\P$ is a set of points $\w_F$, 
one in the relative interior of each half-linear face $F$ of~$\P$.
If the partition $\P$ is non-essential, with $\dim F_{I({\bf 0})}=k\ge0$, then a node system again contains one point $\w_F$ in the relative interior of each half-linear face $F$ of~$\P$, and additionally an ordered sequence of $k+2$ extra points $\w_1,\ldots,\,\w_{k+2}$ on the face $F_{I({\bf 0})}$ such that they positively span the linear subspace $C_{I({\bf 0})}$.  

The points in a node system are referred as \emph{nodes}.
We denote by $N(\P)$ the set of all node systems of~$\P$. 
Note that all vertices of~$\P$ are also nodes in any node system of~$\P$.  
\end{definition}
 
We sometimes write $\w_F(N)=\w_F\in N$, in case it might not be clear which node system we are using.  
If $\P$ is non-essential, the same applies to the nodes $\w_i$ in the minimal face.  

\begin{example}
Here we construct node systems for both partitions of the examples \ref{halflinearexample} and \ref{halflinearexample2}. 
In the first case, every vertex of~$\P$ must be a node. 
We need to include two more nodes $\w_{F_{34}}$ and $\w_{F_{4\infty}}$ in the relative interior 
of the faces $F_{34}$ and $F_{4\infty}$ respectively. We have one degree of freedom to choose each 
of these two nodes. In Figure \ref{nodesystemfig}(left) we depict one possible choice 
for a node system $N$ of~$\P$.  
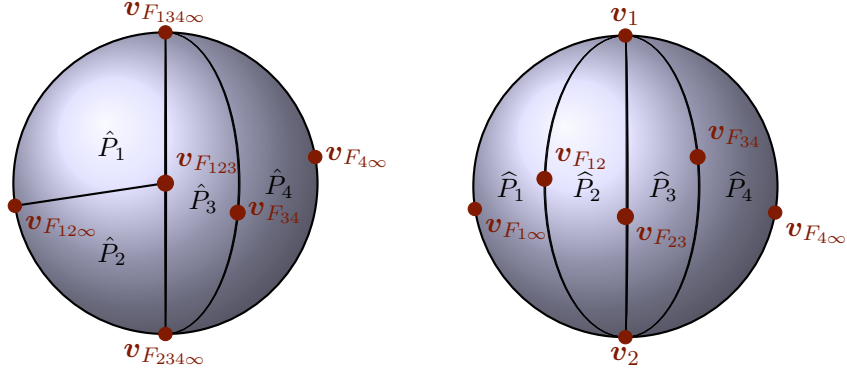
\begin{figure}[htb]\centering
  \begin{tikzpicture}
    \shade [ball color=blue!15]
    (0,0) circle (2);

    \draw (0, -2) -- (0, 2); 
    \draw (0,0) circle (0.01 and 2);
    \draw (0,2) arc (90:-90:0.015 and 2);

    \draw (0,2) arc (90:-90:0.98 and 2);
    \draw (0,2) arc (90:-90:0.97 and 2);
    \draw (0,2) arc (90:-90:0.96 and 2);%

    \draw [thick] (0, 0) -- (-1.98, -0.3); 
    \draw [thick] (0, 0) circle (2);

\draw [fill,tinto] (0, 0) circle (3pt) node[above right] {$\w_{F_{123}}$}; 
\draw [fill,tinto] (0, 2) circle (2.5pt) node[above] {$\w_{F_{134\infty}}$};
\draw [fill,tinto] (0, -2) circle (2.5pt) node[below] {$\w_{F_{234\infty}}$};
\draw [fill,tinto] (-1.98,-0.3) circle (2.5pt) node[below right] {$\w_{F_{12\infty}}$};
\draw [fill,tinto] (10:2) circle (2.5pt) node[right] {$\w_{F_{4\infty}}$};
\draw [fill,tinto] (0.95,-0.39) circle (2.8pt) node[right] {$\w_{F_{34}}$};

      \node at (1.5, 0) {\small $\hat P_4$}; 
      \node at (0.5, -0.2) {\small $\hat P_3$};
      \node at (-0.7, 0.5) {\small $\hat P_1$};
      \node at (-0.7, -0.9) {\small $\hat P_2$};

    \end{tikzpicture}
\qquad
  \begin{tikzpicture} 
    \shade [ball color=blue!15]
    (0,0) circle (2);

    \draw (0,2) arc (90:-90:0.98 and 2);
    \draw (0,2) arc (90:-90:0.97 and 2);
    \draw (0,2) arc (90:-90:0.96 and 2);%

    \draw (0,2) arc (90:270:1.07 and 2);
    \draw (0,2) arc (90:270:1.06 and 2);%
    \draw (0,2) arc (90:270:1.05 and 2);

    \draw (0,2) arc (90:-90:0.01 and 2);
    \draw (0,2) arc (90:-90:0.02 and 2);
    \draw (0,2) arc (90:-90:0.03 and 2);
    \draw (0,2) arc (90:-90:0.035 and 2);

    \draw [thick] (0, 0) circle (2);
    \draw [fill] (0, 2) circle (1.5pt);
    \draw [fill] (0, -2) circle (1.5pt);
    \node at (-1.5, -0) {\small $\widehat P_1$};
    \node at (-0.5, -0) {\small $\widehat P_2$};
    \node at (0.5, -0) {\small $\widehat P_3$};
    \node at (1.5, -0) {\small $\widehat P_4$};
 
    \draw [fill,tinto] (0, -0.4) circle (3pt) node[below right] {$\w_{F_{23}}$}; 
    \draw [fill,tinto] (0, 2) circle (2.5pt) node[above] {$\w_1$};
    \draw [fill,tinto] (0, -2) circle (2.5pt) node[below] {$\w_2$};
    \draw [fill,tinto] (-1.98,-0.3) circle (2.5pt) node[below right] {$\w_{F_{1\infty}}$};
    \draw [fill,tinto] (-10:2) circle (2.5pt) node[below right] {$\w_{F_{4\infty}}$};
    \draw [fill,tinto] (0.95,0.39) circle (2.8pt)  node[above right] {$\w_{F_{34}}$};
    \draw [fill,tinto] (-1.065,0.1) circle (2.8pt) node[above right] {$\w_{F_{12}}$};

  \end{tikzpicture}

    \caption{Node systems for two different $4$-partitions. There is a node in the relative interior of each half-linear face.}
\label{nodesystemfig} 
\end{figure}
 
For the second partition (in Figure \ref{nodesystemfig}(right)), 
we need to have two nodes $\w_1$ and $\w_2$  on the linear face $F_{I({\bf 0})}(\P')$, 
so that they positively span $C_{I({\bf 0})}$. 
There are two possibilities to choose $\w_1$, and $\w_2$ must be the antipodal point $-\w_1$. 
Besides these two nodes, we need five more nodes, one in the relative interior of each 
half-linear face, to get a node system $N'$ of $\P'$.
\end{example}

\begin{proposition}\label{np1}
If $\P$ is an essential $n$-partition, then the set $N(\P)$ of all node systems is a semialgebraic set of dimension $\dim N(\P)=\sum_{F\in \F^H} \dim(F)$.

If $\P$ is non-essential and $k=\dim(F_{I({\bf 0})})$, 
the set $N(\P)$ has dimension 
\[
	\dim N(\P)=k(k+2)+\sum_{F\in \F^H} \dim(F).
\]
\end{proposition}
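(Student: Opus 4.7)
The plan is to exhibit $N(\P)$ as a Cartesian product indexed by the half-linear faces (plus one extra factor in the non-essential case), verify each factor is semialgebraic, and then use that the dimension of a product of semialgebraic sets is the sum of the dimensions.

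First I would check that for each half-linear face $F\in\F^H(\P)$, the relative interior $\relint(F)$ is a semialgebraic subset of $S^d\subset\R^{d+1}$ of dimension $\dim F$. Indeed, by definition $F$ is cut out of $S^d$ by the linear equations defining the supporting subspace of $\cone(F)$ together with one linear half-space inequality; this is manifestly semialgebraic, and the relative interior is obtained by strengthening the halfspace inequality to a strict one and discarding the unique linear subface in its relative boundary. For an essential partition a node system is just the choice of one point $\w_F\in\relint(F)$ for each $F\in\F^H$, independently, so
\[
   N(\P)\ =\ \prod_{F\in\F^H(\P)}\relint(F),
\]
which is semialgebraic of dimension $\sum_{F\in\F^H}\dim F$.

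For the non-essential case, the same product accounts for the half-linear contribution. In addition we must choose an ordered tuple $(\w_1,\ldots,\w_{k+2})\in F_{I({\bf 0})}^{k+2}$ that positively spans $C_{I({\bf 0})}$. Since $C_{I({\bf 0})}$ is a $(k+1)$-dimensional linear subspace of $\R^{d+1}$, the face $F_{I({\bf 0})}=C_{I({\bf 0})}\cap S^d$ is a $k$-sphere, hence a semialgebraic set of dimension $k$, and $F_{I({\bf 0})}^{k+2}$ is semialgebraic of dimension $k(k+2)$. The positive-spanning condition can be rewritten as the existence of strictly positive coefficients $\lambda_1,\ldots,\lambda_{k+2}>0$ with $\sum_i\lambda_i\w_i=0$, which is semialgebraic by Tarski--Seidenberg (Theorem~\ref{tarskiseidenberg}) after projecting out the $\lambda_i$. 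Taking the product of this additional factor with the half-linear factors and using that the product of semialgebraic sets is semialgebraic with additive dimension yields the claimed formula.

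The only subtle point is showing that the positive-spanning condition carves out a full-dimensional (in fact open and non-empty) subset of $F_{I({\bf 0})}^{k+2}$, so that it contributes the full $k(k+2)$ to the dimension count rather than dropping dimension. Openness is immediate from the characterization via strict inequalities $\lambda_i>0$: any sufficiently small perturbation of a positively spanning tuple remains positively spanning. Non-emptiness is clear because one can exhibit $k+2$ points on the $k$-sphere $F_{I({\bf 0})}$ that positively span $C_{I({\bf 0})}$ (for instance $k+1$ orthonormal vectors and the negative of their sum, rescaled to $S^d$). With openness and non-emptiness, the set has the same dimension as $F_{I({\bf 0})}^{k+2}$, namely $k(k+2)$, completing the proof.
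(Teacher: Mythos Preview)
The paper does not actually prove this proposition; it refers the reader to \cite[Proposition~4.29]{thesis}. Your product decomposition is the natural approach and is essentially correct. One point needs correction, however: the claim that positive spanning of $C_{I({\bf 0})}$ ``can be rewritten as the existence of strictly positive coefficients $\lambda_1,\ldots,\lambda_{k+2}>0$ with $\sum_i\lambda_i\w_i=0$'' is false as stated. For example, with $k=1$ the ordered triple $\w_1=\w_2=(1,0)$, $\w_3=(-1,0)$ on $S^1\subset\R^2$ satisfies $\w_1+\w_2+2\w_3=0$ with all coefficients positive, yet these vectors span only a line and hence do not positively span $\R^2$. What is missing is the requirement that the $\w_i$ linearly span $C_{I({\bf 0})}$; for exactly $k+2$ vectors in a $(k{+}1)$-dimensional space this, together with your positive-dependence condition, is equivalent to positive spanning (the unique linear dependence, up to scaling, must have all coefficients of one sign). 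The amended condition is still first-order, hence semialgebraic by Theorem~\ref{tarskiseidenberg}, and still cuts out an open non-empty subset of $F_{I({\bf 0})}^{\,k+2}$, so the dimension count $k(k+2)$ and the overall conclusion survive unchanged. Your openness argument (``strict inequalities $\lambda_i>0$'') is also a bit glib, since the $\lambda_i$ are existentially quantified and projections of open sets need not be open; the cleaner route is to note that linear spanning is an open (determinantal) condition and that the coefficients of the unique dependence vary continuously with the $\w_i$.
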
 

The proof of this proposition and a more precise description of the set of all node systems for a given partition can be found in \cite[Proposition 4.29]{thesis}.

\begin{lemma}\label{convexhullofnodes}
If $N$ is a node system of a partition $\P$, then any face $F$ of~$\P$ can be obtained as the spherical convex hull of the set of nodes in $N$ contained in $F$. 
\end{lemma}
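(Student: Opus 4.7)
The plan is induction on $\dim F$, using that the polyhedral cone $C_F=\cone(F)$ decomposes as its lineality space $L_F$ plus a pointed part $P$. Exactly one of three cases arises: (i) $C_F=L_F$ is linear, which by the classification stated after Example \ref{halflinearexample} forces $F=F_{I({\bf 0})}$; (ii) $C_F$ is half-linear ($\dim P=1$); or (iii) $\dim P\geq 2$. The inclusion $\sconv(\text{nodes on }F)\subseteq F$ is immediate in all cases since every node lies in $F$ and $F=C_F\cap S^d$ is closed under taking spherical convex hulls, so the work is to establish the reverse inclusion.

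For the base case $\dim F=0$: if $F$ is a vertex (hence half-linear in the essential setting), the unique node $\w_F=F$ gives $\sconv(\{\w_F\})=F$; if $F=F_{I({\bf 0})}$ consists of two antipodal points, the $k+2=2$ extra nodes required by Definition \ref{nodesystem} must positively span the one-dimensional $C_{I({\bf 0})}$, forcing them to be precisely those two antipodal points. For the inductive step in case (i), the $k+2$ extra nodes positively span $C_{I({\bf 0})}$ by definition, so their spherical convex hull equals $C_{I({\bf 0})}\cap S^d=F$. In case (iii), polyhedrality together with $\dim P\geq 2$ gives $C_F=\sum_{F''}\cone(F'')$ where $F''$ ranges over the proper subfaces of $F$, so $F$ equals the spherical convex hull of the union of its proper subfaces; each proper subface is a union of lower-dimensional faces of $\P$ (see \cite[Lemma 3.23]{thesis}, referenced just before Example \ref{example:subfaces}), and the inductive hypothesis expresses each such face of $\P$ as the spherical convex hull of its nodes, yielding $F\subseteq\sconv(\text{nodes on }F)$.

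The main obstacle is case (ii): the unique proper subface $F'=L_F\cap S^d$ already satisfies $\sconv(F')=F'$, which is a sub-sphere strictly smaller than $F$, so the subfaces alone cannot generate $F$---this is precisely the reason Definition \ref{nodesystem} prescribes a node $\w_F$ in the relative interior of every half-linear face. Applying the inductive hypothesis to the lower-dimensional faces of $\P$ whose union is $F'$ gives $\sconv(\text{nodes on }F')=F'$; since $\w_F\in\relint C_F$ while $L_F$ has codimension one in $C_F$, one has $C_F=L_F+\R_{\geq 0}\w_F$, and therefore
\[
\sconv(\text{nodes on }F)\;\supseteq\;\sconv(F'\cup\{\w_F\})\;=\;\cone(L_F\cup\{\w_F\})\cap S^d\;=\;(L_F+\R_{\geq 0}\w_F)\cap S^d\;=\;F,
\]
completing the induction.
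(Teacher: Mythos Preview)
Your proof is correct and follows the same inductive skeleton as the paper's proof: induction on $\dim F$, with the minimal face $F_{I({\bf 0})}$ handled by the positive-spanning condition on the $k+2$ extra nodes, and the half-linear case handled via the interior node $\w_F$ together with the inductive hypothesis on the linear boundary subface.

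The one place where your argument differs is case~(iii). The paper, for a non-half-linear face $F$, locates a single node $\w$ on $\partial F$ whose antipode $-\w$ lies outside $F$, and then reuses the ``interval from $\w$ to a boundary point'' argument verbatim from the half-linear case. You instead invoke the structural fact that a polyhedral cone with pointed part of dimension at least~$2$ is generated by its proper faces, so $F$ equals the spherical convex hull of its proper subfaces, and then apply induction through the subface--face decomposition of \cite[Lemma~3.23]{thesis}. Your route is a bit cleaner in that it avoids having to argue the existence of such a $\w$ (which the paper leaves implicit), at the cost of invoking slightly more polyhedral machinery; the paper's route has the virtue of treating the half-linear and non-half-linear cases with a single geometric picture. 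Both are short and neither requires anything beyond what is already set up in the paper.
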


\begin{proof}
By induction on the dimension of $F$, first take $F=F_{I({\bf 0})}$ to be the minimal face of~$\P$, with $\dim(F_{I({\bf 0})})=k$. We have $k+2$ nodes in $F_{I({\bf 0})}$ that positively span the $(k+1)$-dimensional linear subspace $C_{I({\bf 0})}$, and therefore its spherical convex hull is equal to $F_{I({\bf 0})}$. If the partition is essential, $F_{I({\bf 0})}=\emptyset$ doesn't contain any node, and its convex hull is also empty.

Now suppose that $\dim(F)=m$ and every face $F'$ of~$\P$ with  $\dim F'<m$ is equal to the convex hull of the nodes contained $F'$. If $F$ is half-linear, we have an extra node $\w_F$ in the interior of $F$, and any other point $\x$ in $F$ is in an interval between $\w_F$ and a point $\x'$ in the boundary of $F$. Since $\w_F$ cannot be antipodal to $\x'$, then $\x$ can be written as a positive combination of $\w_F$ and $\x'$. By the induction hypothesis, $\x'$ is a positive combination of the nodes in the face where it belongs that are also contained in $F$. We use here that subfaces are union of faces. Therefore $\p$ is in the spherical convex hull of the nodes in $F$.

If $F$ is not half-linear, we can find a node $\w$ in the boundary such that its antipodal point 
is not in $F$. Now we can repeat the argument given before, and the result follows.
\end{proof}
  
\begin{definition}[Cell complex from a node system]\label{cwrefinement}
For any node system $N$ of a partition~$\P$, there is a CW complex $\P_N$ such 
that the vertices of this complex are precisely the nodes in $N$, 
and such that each face of~$\P$ is union of faces of~$\P_N$. 
The \emph{complex $\P_N$} is obtained recursively as follows:
\begin{itemize} \itemsep=0pt
\item Include a face $F_S$ in $\P_N$ for every subset $S$ of nodes contained in the minimal face $F_{I({\bf 0})}$, with $|S|\le k+1$, where $F_S$ is the spherical convex hull of $S$ and $k=\dim F_{I({\bf 0})}$. For essential partitions, only the empty set is included in this step.
\item For every half-linear face $F$ of~$\P$ such that the boundary is already covered by faces of $\P_N$, the spherical convex hull of every face $G$ of $\P_N$ contained on the boundary of $F$ together with the  node $\w_F$ is also a face of $\P_N$. (These faces of $\P_N$ are pyramids over the faces on the boundary of $F$.)
\item All other faces of~$\P$ that are not linear or half-linear are also faces of $\P_N$.
\end{itemize} 
\end{definition}

\begin{example}
For the two partitions given in Figure \ref{nodesystemfig}, the cell complex obtained from this construction coincides precisely with what is shown in the picture, where every half-linear $1$-face is subdivided in two segments and every non-pointed region forms a $2$-cell with four nodes and four $1$-faces on the boundary. For a more illustrative  example, consider the $1$-partition of $\R^2$ into one region. This ``partition'' is non-essential, with minimal face $F_{I({\bf 0})}=F_{1\infty}$ of dimension one, equals to the boundary of $\overline S^d_+$ (this face is homeomorphic to $S^1$ and cannot be a cell). There is also one half-linear face $F_1$, that coincides with $\overline S^d_+$. Therefore a node system here would have four nodes, three on the boundary face $F_{I({\bf 0})}$ that positively span the plane containing that face, and one more node $n$ in the interior of $\overline S^d_+$.
The cell complex in this case is obtained by first taking the spherical convex hull of every subset of nodes in the boundary with two or less elements, that form a subdivision of $F_{I({\bf 0})}$ in three edges and three vertices, and then taking the pyramid over all those faces, with apex on the interior node $n$, to obtain a cell decomposition as shown in Figure \ref{1partition}.
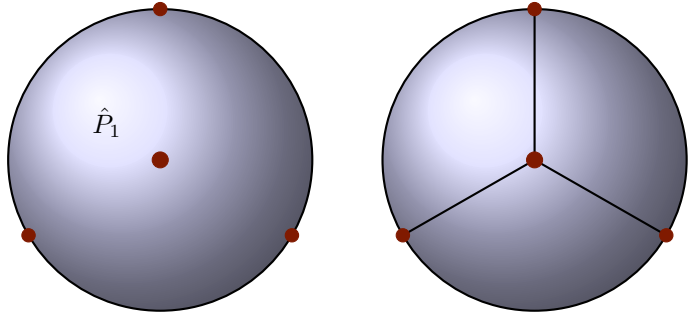
\begin{figure}[htb]\centering
  \begin{tikzpicture}
    \shade [ball color=blue!15]
    (0,0) circle (2);

    \draw [thick] (0, 0) circle (2);

    \draw [fill,tinto] (0, 0) circle (3pt);
    \draw [fill,tinto] (0, 2) circle (2.5pt);
    \draw [fill,tinto] (-30:2) circle (2.5pt);
    \draw [fill,tinto] (210:2) circle (2.5pt);

    \node at (-0.7, 0.5) {\small $\hat P_1$};

  \end{tikzpicture}
\qquad
  \begin{tikzpicture} 
    \shade [ball color=blue!15]
    (0,0) circle (2);

    \draw [thick] (0, 0) circle (2);

    
    \draw [thick] (0, 0) -- (0,2);  
    \draw [thick] (0, 0) -- (-30:2);  
    \draw [thick] (0, 0) -- (210:2);  
    
    \draw [fill,tinto] (0, 0) circle (3pt);
    \draw [fill,tinto] (0, 2) circle (2.5pt);
    \draw [fill,tinto] (-30:2) circle (2.5pt);
    \draw [fill,tinto] (210:2) circle (2.5pt);

  \end{tikzpicture}

    \caption{Node system and cell complex $\P_N$ corresponding to the partition of $\R^2$ with only one region}
\label{1partition} 
  \end{figure}
\end{example}

\begin{lemma}
The complex $\P_N$ is a regular CW complex homeomorphic to a $d$-ball.
\end{lemma}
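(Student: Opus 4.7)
The plan is to prove this by induction on cell dimension, following the recursive construction of $\P_N$ in Definition \ref{cwrefinement} and using Theorem \ref{cwcomplex} as the starting point for essential partitions while handling the minimal face separately in the non-essential case. Recall that a CW complex is \emph{regular} when each closed cell is homeomorphic to a closed ball via a characteristic map whose restriction to the boundary sphere is a homeomorphism onto a subcomplex of the same dimension.

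I would first address the minimal face $F_{I({\bf 0})}$ as the base case. If $\P$ is essential then $F_{I({\bf 0})}=\emptyset$ and nothing happens. Otherwise, set $k=\dim F_{I({\bf 0})}\ge 0$; the face is the intersection of $S^d$ with a $(k+1)$-dimensional linear subspace and is topologically a $k$-sphere. The $k+2$ nodes positively span $C_{I({\bf 0})}$, placing them on this $k$-sphere in the same combinatorial pattern as the vertices of a regular $(k+1)$-simplex centered at the origin. Their spherical convex hulls, taken over all subsets of size at most $k+1$, therefore form a regular CW decomposition of $F_{I({\bf 0})}$ isomorphic to the boundary complex of a $(k+1)$-simplex, a standard regular $k$-sphere.

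For the inductive step, I would assume that for every face of $\P$ of dimension less than $m$ the corresponding portion of $\P_N$ is already a regular CW ball (or a $k$-sphere in the case of $F_{I({\bf 0})}$ for a non-essential partition). Consider an $m$-dimensional half-linear face $F$: by the discussion preceding Example \ref{halflinearexample}, its relative boundary is a unique linear subface $F'$ of dimension $m-1$ that is topologically an $(m-1)$-sphere and is, by the inductive hypothesis, covered by a regular CW subcomplex. The construction then adds the spherical pyramid from $\w_F$ over each cell in this boundary subcomplex. Because $\w_F$ lies in the relative interior of $F$ and $F$ is contained in a closed halfspace of its linear span, no antipodal pair is ever involved, so each pyramid is a regular spherical cell whose attaching map is a homeomorphism; together the pyramids realize $F$ as the cone over its boundary sphere, hence as a closed $m$-ball. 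For an $m$-dimensional face $F$ of $\P$ that is neither linear nor half-linear, the defining cone is pointed, so $F$ lies in an open hemisphere and radially projects to an ordinary convex polytope; by Lemma \ref{convexhullofnodes} this polytope equals the convex hull of the nodes contained in $F$, so $F$ itself is already a regular $m$-cell whose boundary is covered by lower-dimensional cells of $\P_N$ by induction.

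Finally, the union of all closed cells of $\P_N$ equals $\overline{S^d_+}$, which is homeomorphic to a closed $d$-ball, so $\P_N$ is itself a $d$-ball. The hardest part will be verifying regularity carefully --- specifically, ensuring that no characteristic map identifies antipodal points. This requires the half-linearity condition when taking pyramids (the apex cannot be antipodal to a boundary point) and the pointedness of the defining cone for non-half-linear faces (so the radial projection to a Euclidean convex polytope is a homeomorphism). Everything else reduces to standard cone and pyramid constructions on regular CW balls and spheres.
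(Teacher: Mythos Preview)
The paper states this lemma without proof (the argument is deferred to the thesis \cite{thesis}), so there is no proof in the paper to compare against directly. Your inductive strategy along the recursion of Definition~\ref{cwrefinement} is the natural one, and your treatment of the minimal face and of half-linear faces is essentially correct. However, there is a genuine gap in your third case.

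You assert that for a face $F$ of $\P$ that is neither linear nor half-linear ``the defining cone is pointed, so $F$ lies in an open hemisphere and radially projects to an ordinary convex polytope.'' This is false. Take the $4$-partition of Example~\ref{ex1}: the region $P_3$ is a strip between two parallel lines, and the closure $F_3=\overline{\widehat P_3}$ is a spherical triangle two of whose vertices, $F_{134\infty}$ and $F_{234\infty}$, are antipodal; hence $C_3$ has a one-dimensional lineality space and is not pointed. The same happens for $F_4$, which is a lune. Example~\ref{halflinearexample} explicitly lists the half-linear faces of this partition (the four vertices together with $F_{34}$ and $F_{4\infty}$), so $F_3$ and $F_4$ are precisely faces of the kind you are treating here. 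Your radial-projection argument therefore does not apply, and Lemma~\ref{convexhullofnodes} alone does not hand you a characteristic map that is a homeomorphism onto a closed ball.

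What you actually need is the general fact that if $C$ is a polyhedral cone which is not a linear subspace, then $C\cap S^d$ is a closed ball: write $C=L+C'$ with $L=\lineal(C)$ and $C'\subset L^\perp$ pointed, so that $C\cap S^d$ is the spherical join of the sphere $L\cap S^d$ with the ball $C'\cap S^{d}\cap L^\perp$, and the join of a sphere with a ball is a ball. With this in place you must still verify that the relative boundary of such an $F$ is already a regular CW sphere in $\P_N$; this follows from the inductive hypothesis once you note that every proper subface of $F$ is a union of faces of $\P$ of strictly smaller dimension (cf.\ Definition~\ref{def:subfaces} and the remark following it), but it deserves an explicit sentence rather than the bare phrase ``by induction.''
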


\begin{proposition}\label{samecomplexforpointed}
For a  pointed partition $\P$, the complex $\P_N$ coincides with the cell complex $\P$ described in Theorem \ref{cwcomplex}. The vertices of $\P_N$ are precisely the vertices of~$\P$.
\end{proposition}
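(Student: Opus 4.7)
The plan is to reduce the proposition to an analysis of which faces of a pointed partition are half-linear, then to read off that the nodes of any node system are forced to be exactly the vertices of $\P$, and finally to verify step-by-step that the three-stage construction in Definition \ref{cwrefinement} recovers the cell complex from Theorem \ref{cwcomplex}.

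First I would show that, for a pointed partition $\P$, every face $F_I$ of $\P$ has a pointed cone $C_I$. Since $\{\infty\}\notin \I(\P)$, any index set $I\in\I(\P)$ contains some $i\neq\infty$, so $C_I=\cone(F_I)\subseteq C_i$, and pointedness of $C_i$ is inherited by $C_I$. Next I would argue that a half-linear face of dimension $m\ge 1$ has a cone equal to a closed halfspace inside an $(m+1)$-dimensional linear subspace, which contains the bounding $m$-dimensional linear subspace and therefore contains a line. This is incompatible with $C_I$ being pointed, so for pointed partitions the only half-linear faces are the $0$-dimensional ones. Combining with the observation recorded in the text that in an essential partition all vertices are half-linear, and with Lemma~\ref{pointedareessential}, I conclude that $\F^H(\P)$ is exactly the set of vertices of $\P$.

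From this, the description of $N(\P)$ in Definition \ref{nodesystem} forces every node system of a pointed partition to consist of one point in the relative interior of each vertex, i.e.\ the vertex itself. Hence the node system is unique, and the set of nodes is precisely the set of vertices of $\P$; this already gives the second assertion of the proposition. It remains to verify that the cells of $\P_N$ coincide with the faces of $\P$. I would go through the three construction steps of Definition~\ref{cwrefinement} in turn. Since $\P$ is essential, the first step contributes only the empty face, which matches the minimal face $F_{I(\mathbf{0})}=\emptyset$ of $\P$. In the second step, each half-linear face $F$ is a vertex with empty boundary, so the pyramid construction just reproduces that vertex as a $0$-cell of $\P_N$. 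For the third step I need that every face of $\P$ of dimension $\ge 1$ is neither linear nor half-linear; linearity is ruled out by \cite[Lemma 3.22]{thesis} (only the minimal face is linear, and here it is empty), while half-linearity of positive-dimensional faces was excluded in the previous paragraph. Thus the third step contributes exactly the faces of $\P$ of dimension $\ge 1$, without any further subdivision.

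Putting the three steps together, the cells of $\P_N$ are in bijection with the faces of $\P$, and by Proposition~\ref{pointedfacesbyvertices} each such cell is the spherical convex hull of its set of nodes, which equals the original face. The face relations and incidence structure therefore agree, so $\P_N$ and the CW complex of $\P$ from Theorem \ref{cwcomplex} coincide. The main obstacle in carrying this out is the first step, namely showing that in the pointed case no face of dimension $\ge 1$ can be half-linear; once this is established the rest is a matter of unpacking Definition~\ref{cwrefinement}.
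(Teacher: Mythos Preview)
Your proof is correct and follows essentially the same route as the paper's: the key observation in both is that a half-linear face of dimension at least one has a cone containing a line, so in a pointed partition the half-linear faces are exactly the vertices, whence the node system is unique and Definition~\ref{cwrefinement} reproduces the faces of~$\P$ unchanged. Your version is more detailed---in particular you make explicit why every face cone $C_I$ inherits pointedness from some region cone $C_i$, and you walk through all three stages of the construction---but the underlying argument is the same.
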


\begin{proof}
For essential partitions all vertices are half-linear faces, and the corresponding node must be precisely at the vertex. If the partition $\P$ is pointed, there are no other half-linear faces, since the cone of a half-linear face $F$ of dimension $\dim F\ge 1$ contains antipodal points on its boundary and therefore is not pointed. Then no other nodes are included, and all faces of $\P_N$ are precisely the faces of~$\P$, so that we end up with the same complex.
\end{proof}

\begin{lemma}
Let $\P$ be a fixed $n$-partition. Then the combinatorial structure of the complex $\P_N$ does not 
depend on the choice of the nodes in $N$, i.\,e.\ for any two node systems $N, N'\in N(\P)$ the face posets of the complexes $\P_N$ and $\P_{N'}$ 
are isomorphic and the complexes are cellularly homeomorphic. 
\end{lemma}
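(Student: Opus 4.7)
The plan is to exhibit a canonical combinatorial label on each face of $\P_N$ that depends only on $\P$, then read off the desired poset isomorphism and extend it to a cellular homeomorphism.

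First I would catalog the faces of $\P_N$ produced by Definition \ref{cwrefinement}. They come in three types: (a) spherical convex hulls $\sconv(S)$ for subsets $S \subseteq \{\w_1,\ldots,\w_{k+2}\}$ of size at most $k+1$ on the minimal face (only if $\P$ is non-essential); (b) pyramids with apex $\w_F$ over a previously constructed face contained in $\partial F$, for each half-linear face $F \in \F^H(\P)$; and (c) all remaining faces of $\P$ that are neither linear nor half-linear. In each case the face carries a combinatorial label that only records the \emph{identity} of the relevant nodes---their index $i \in \{1,\ldots,k+2\}$ for type (a), or their associated half-linear face $F$ for type (b)---together with the identity of the face of $\P$ in type (c). None of this data depends on the geometric positions of the nodes.

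Given two node systems $N, N' \in N(\P)$, I would define the bijection $\phi:N \to N'$ by $\phi(\w_F(N)) = \w_F(N')$ for each $F\in\F^H(\P)$ and $\phi(\w_i) = \w'_i$ for the ordered nodes on $F_{I({\bf 0})}$ in the non-essential case. This extends to a map $\Phi_*$ on face posets by sending each type (a) face $\sconv(S)$ to $\sconv(\phi(S))$, each type (b) pyramid to the pyramid with apex $\phi(\w_F)$ over the $\Phi_*$-image of its base (defined recursively on dimension), and each type (c) face to itself. Verifying that $\Phi_*$ is an order-preserving bijection reduces to checking that face containment in $\P_N$ is controlled purely by containment of node-label sets together with the structural rules of Definition \ref{cwrefinement}; this is immediate from the recursive construction, so $\Phi_*$ is a poset isomorphism.

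To upgrade $\Phi_*$ to a cellular homeomorphism $\Phi:|\P_N| \to |\P_{N'}|$, I would proceed inductively on skeleta: set $\Phi=\phi$ on $0$-cells and extend across each higher-dimensional cell using its explicit geometric description---spherical barycentric interpolation between the two ordered $(k+2)$-tuples on $F_{I({\bf 0})}$ for type (a) cells, and cone coordinates around the apex $\w_F$ for type (b) pyramids. Type (c) cells are mapped by the identity. The main obstacle will be verifying that these cell-by-cell extensions glue into a globally defined homeomorphism, i.e.\ that the interpolated map on each cell agrees on shared boundary faces with the map built at earlier stages. This reduces to the fact that each closed cell of $\P_N$ is a topological ball whose boundary has already been parameterized consistently by the inductive hypothesis, combined with the standard extension of homeomorphisms of $\partial B^m$ to $B^m$ via coning from an interior basepoint---which is precisely the pyramid setting of Definition \ref{cwrefinement}, so the construction can be carried out without further obstruction.
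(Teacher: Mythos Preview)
Your face-poset argument is correct and is essentially the paper's one-line proof made explicit: the recursive construction of Definition~\ref{cwrefinement} reads only the face poset of $\P$ together with the datum of which faces are linear or half-linear, so the output poset is determined independently of where the nodes sit. Your cataloguing into types (a), (b), (c) and the label-based bijection $\Phi_*$ simply unpack that observation.

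There is, however, a slip in your cellular-homeomorphism construction. You propose to map each type (c) cell by the identity, but a type (c) face $F$ of $\P$ can have half-linear faces on its boundary (for instance $\widehat P_4$ in Example~\ref{halflinearexample} is not half-linear, yet its boundary contains the half-linear faces $F_{34}$ and $F_{4\infty}$). On those boundary pieces your map $\Phi$ already moves nodes, so it is \emph{not} the identity there, and the identity on $F$ will not glue. The fix is exactly the Alexander-trick extension you invoke for the pyramids: take any interior point of $F$ and cone the already-constructed boundary homeomorphism inward. With that correction your inductive scheme goes through.

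The paper sidesteps this entire construction: since $\P_N$ is a \emph{regular} CW complex (the lemma just before this one), an isomorphism of face posets automatically yields a cellular homeomorphism, so once the poset statement is established there is nothing further to build by hand.
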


\begin{proof}
The face poset of $\P_N$ can be obtained from the face poset of~$\P$, once we know which are the linear and half-linear faces, independently of the choice of the node system $N$. Following the construction in Definition \ref{cwrefinement}, we can  obtain the face poset of $\P_N$ from the face poset of~$\P$.  
\end{proof}

\begin{definition}[Flags, node frames, node bases, flats]
  Let $\P$ be an $n$-partition, together with a node system $N$. A \emph{flag} of faces of $\P_N$ is a list of faces $G_0\subset \cdots\subset G_d$ completely ordered by containment.
  A \emph{node frame} of $N$ is a list $(\v_0,\ldots,\,\v_d)$ of $d+1$ different nodes in $N$ such that the nodes $\v_0,\ldots,\v_k$ are contained on a $k$-face $G_k$ of $\P_N$ for all $k\le d$ and the faces $G_0\subset \cdots\subset G_d$ form a flag. A \emph{node basis} is a node frame whose vectors are linearly independent and a \emph{flat} is a node frame whose vectors are linearly dependent.
\end{definition}

As the vertices of $\P_N$ are precisely the nodes in $N$ and the face poset of $\P_N$ is 
the same for any node system $N$, for any node frame $(\v_0,\ldots,\,\v_d)$ and 
any other node system $N'$ of~$\P$, the corresponding list of nodes 
$(v_0(N'),\ldots,\,v_d(N'))$ is a node frame of $N'$.  
Also any two partitions $\P$ and $\P'$ with the same face poset and the 
same corresponding half-linear faces have a bijection between node frames, 
as node frames can be read completely from the combinatorial structure of $\P_N$. 

\begin{lemma}\label{signsofflags}
Let $G_0\subset \cdots\subset G_d$ be a complete flag of faces in the complex $\P_N$. 
Then for any list $\x_0,\ldots,\x_k$ of linearly independent vectors in $S^d$ such that 
$\x_i\in G_i$ for all $0\le i\le d$, the sign of the determinant $\det(\x_0,\ldots,\,\x_d)$ 
is given uniquely by the flag $G_0\subset \cdots\subset G_d$.
\end{lemma}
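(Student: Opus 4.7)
The plan is a connectedness argument: I will show that the set of admissible tuples $(\x_0,\ldots,\x_d)$ forms a path-connected set, on which the continuous, nowhere-vanishing function $\det(\x_0,\ldots,\x_d)$ must have constant sign. (I read the statement as $\x_0,\ldots,\x_d$; the ``$\x_k$'' in the statement is a typo, since the determinant takes $d+1$ arguments.)

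First I would set up the associated flag of linear subspaces. Let $L_i := \spn(G_i)\subseteq\R^{d+1}$. Since $G_i$ is an $i$-dimensional spherical convex polytope, $\dim L_i = i+1$, and the flag of faces induces a flag of subspaces
\[
L_0\subset L_1\subset\cdots\subset L_d=\R^{d+1}.
\]
Because $\x_j\in G_j\subset L_j$ for $j<i$, the span of $\x_0,\ldots,\x_{i-1}$ is contained in $L_{i-1}$; when these vectors are linearly independent this span has dimension $i$ and therefore equals $L_{i-1}$. Hence the linear independence of $(\x_0,\ldots,\x_d)$ is equivalent to the conditions $\x_i\notin L_{i-1}$ for $i=1,\ldots,d$.

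Next I would establish that $G_i\setminus L_{i-1}$ is nonempty and path-connected. Since the flag $G_0\subset\cdots\subset G_d$ is a maximal chain in the face poset of the regular CW complex $\P_N$, each $G_{i-1}$ is a facet of $G_i$. By the theory of supporting hyperplanes for convex polyhedral cones, $L_{i-1}$ (the linear span of that facet) is a supporting hyperplane of $\cone(G_i)$ in $L_i$, which yields
\[
L_{i-1}\cap G_i \;=\; G_{i-1}.
\]
Thus $G_i\setminus L_{i-1} = G_i\setminus G_{i-1}$ is the complement of a proper face of positive codimension in an $i$-dimensional convex spherical polytope; it contains $\relint(G_i)$ and deformation retracts onto it, so it is nonempty and path-connected.

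Finally, with $L_{-1}:=\{\mathbf 0\}$, the set
\[
X \;:=\; \prod_{i=0}^{d}\bigl(G_i\setminus L_{i-1}\bigr)
\]
is a product of nonempty path-connected spaces, hence path-connected. By the first step, $X$ is exactly the set of admissible tuples $(\x_0,\ldots,\x_d)$ with $\x_i\in G_i$ and the $\x_i$ linearly independent. The determinant is continuous on $X$ and, by linear independence, nowhere zero; a continuous nonvanishing real function on a connected set has constant sign, so $\mathrm{sign}\,\det(\x_0,\ldots,\x_d)$ depends only on the flag $G_0\subset\cdots\subset G_d$. The main technical obstacle is the identity $L_{i-1}\cap G_i=G_{i-1}$: one must verify that the facet relation provided by the maximal chain indeed forces $L_{i-1}$ to be a supporting hyperplane of $\cone(G_i)$, rather than a hyperplane cutting through its relative interior.
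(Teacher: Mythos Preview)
Your connectedness argument is correct and takes a genuinely different route from the paper. The paper proceeds algebraically: it builds a basis $(\b_0,\ldots,\b_d)$ adapted to the flag, with $\b_0\in G_0$ and each $\b_i$ (for $i\ge1$) the unit vector in $L_i$ orthogonal to $L_{i-1}$ and pointing into $G_i$; then the matrix expressing $(\x_0,\ldots,\x_d)$ in this basis is upper triangular with strictly positive diagonal entries, so $\det(\x_0,\ldots,\x_d)$ is a positive multiple of the fixed number $\det(\b_0,\ldots,\b_d)$. Your approach avoids coordinates entirely and instead exploits the topology of the space of admissible tuples. Both arguments rest on precisely the same geometric input you flag at the end---that $L_{i-1}$ is a supporting hyperplane of $\cone(G_i)$ inside $L_i$---which the paper uses implicitly when asserting that $\b_i$ can be chosen with $\b_i\cdot\x\ge0$ for all $\x\in G_i$. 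The paper's version is a bit more explicit (it names the actual sign as $\mathrm{sign}\,\det(\b_0,\ldots,\b_d)$), while yours is more conceptual and would transfer unchanged to any setting where the factors are connected.

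One small imprecision worth fixing: the identity $L_{i-1}\cap G_i = G_{i-1}$ need not hold. A polyhedral facet of a cell $G_i$ of $\P_N$ can be a union of several cells of $\P_N$; this occurs for cells coming from faces of $\P$ that are neither linear nor half-linear, whose subfaces are only unions of faces (cf.\ Definition~\ref{def:subfaces} and Example~\ref{example:subfaces}). What is true---and all you need---is that $L_{i-1}\cap G_i$ is a proper polyhedral face of $G_i$ containing $G_{i-1}$. The complement $G_i\setminus L_{i-1}$ is then still $G_i$ minus a proper face, hence path-connected via $\relint(G_i)$, and your argument goes through verbatim after this adjustment.
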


\begin{proof}
Let  $\b_0,\ldots,\b_d$ be the basis of $\R^{d+1}$ where $\b_0\in G_0$ and every  
$\b_i$ for $0<i\le d$ is the vector in the linear space spanned by the face $G_i$ 
orthogonal to the subspace spanned by $G_{i-1}$, such that any point $\x\in G_i$ 
satisfy the inequality $\b_i\cdot \x\ge 0$. 
This basis is uniquely defined by the flag $G_0\subset \cdots\subset G_d$.

Then the vectors $(\b_0,\ldots,\,\b_i)$ span the same linear subspace as the face $G_i$. In terms of this basis, the list of vectors $(\x_0,\ldots,\,\x_d)$ is represented by an upper triangular matrix 
\[\begin{pmatrix}
1 & a_{01} & \cdots & a_{0d}\\ 
0 & a_{11} & \cdots & a_{1d}\\ 
\vdots & \vdots & \ddots & \vdots\\
0 & 0 & \cdots & a_{dd}\\
 \end{pmatrix}\]
where all diagonal entries $a_{ii}$ are greater than zero.
Then we conclude that \[\det(\x_0,\ldots,\,\x_d)=\Big(\prod_{i=1}^d a_{ii}\Big)\det(\b_0,\ldots,\,\b_d).\] will always have the same sign, independently of the choice of the points $\x_i$. (This determinant cannot be $0$ as we require that the vectors $\x_0,\ldots,\,\x_d$ are linearly independent.)
\end{proof}
 
Now we will explain a second approach to prove that $\C(\R^{d},n)$ is a union of semialgebraic pieces. With the different concepts we have now, we can define when two partitions are combinatorially equivalent, and use this to construct the realization space of any partition $\P$ (made by all partitions that are combinatorially equivalent to $\P$).  
This will be useful in the discussion about the dimension of the spaces of convex $n$-partitions.

Given an $n$-partition $\P$, we want to describe all $n$-partitions that are combinatorially equivalent to $\P$. 
Two partitions $\P$ and $\P'$ have the same face poset if $\I(\P)=\I(\P')$. They have the same corresponding half-linear faces if the indices $I\in\I(\P)$ such that $F_I(\P)$ is half-linear are the same indices for which $F_I(\P')$ is half-linear.

\begin{definition}[Orientation of a partition]
  The \emph{orientation} of a partition $\P$ of $\R^d$ is given by the signs of the determinants $\det(\v_0,\ldots,\,\v_d)$ of all node frames of a node system $N$ of~$\P$.
\end{definition}

Orientations of partitions are closely related with orientations of cell complexes. If we consider the barycentric subdivision $S_N=\sd \P_N$ obtained by taking a point $\y_G$ in the relative interior of each face $G$ of $\P_N$, and with maximal simplices that are the spherical convex hull of sets $\y_{G_0},\ldots ,\y_{G_d}$ for each complete flag $G_0\subset \cdots\subset G_d$ in $\P_N$, then by Lemma \ref{signsofflags}, we can read an orientation of the simplicial complex $S_N$ from the orientation of~$\P$.

 Since orientations of oriented simplicial complexes are determined after fixing the orientation of one simplex, then it is enough to know the sign of one node basis to determine the sign of all other node bases of $\P_N$.
 In particular, if $\P$ is an essential partition, then any  node system on $\P$ will give rise to the same orientation. 
If $\P$ is non-essential, there are two possible orientations, depending on the choice of the nodes on the minimal face $F_{I({\bf 0})}$.

Orientations also keep track of which node frames are node basis and which are flats. 
 Two partitions $\P$ and $\P'$ with the same face poset and corresponding half-linear faces have the same orientation if there are  node systems $N$ and $N'$ on each of them, so that the sign  of the determinants of corresponding node basis are always the same and they have the same corresponding flats.  

\begin{definition}[Combinatorial type of a partition]\label{combinatorialtype}
The \emph{combinatorial type} of an $n$-partition $\P$ is given by the following information: the set $\I(\P)$ of labels of the face poset, the set of half-linear faces of~$\P$, and the orientation given by a node system of~$\P$.
\end{definition}
 
Orientations allow us to distinguish the combinatorial type of an essential partition and its 
reflection on a hyperplane. If a partition has some reflection symmetry, it implies that it is non-essential. 
Orientations also make sure that combinatorially equivalent partitions have the same $\pi$-angles, 
as defined next.
 
\begin{definition}[$\pi$-angles]\label{piangle}
Two $(d-1)$-faces $F_{ij}$ and $F_{ik}$ form a \emph{$\pi$-angle} if they belong to the same $(d-1)$-subface of a $d$-face $F_i$ of~$\P$ and their intersection is $(d-2)$-dimensional.  
This means that the dihedral angle between these two $(d-1)$-faces is equal to~$\pi$. 
\end{definition}
 
For the proof of Theorem \ref{descriptionsemialgebraic} we need a characterization for cone partitions from \cite{firlaZiegler}. 
A \emph{cone partition} (called simply a partition in that paper) of a cone $C$ is a collection of cones $C_1,\cdots,C_r$ contained in $C$ such that every point of $C$ is contained in one of the subcones $C_i$, where also the intersection of any two subcones $C_i \cap C_k$ is a face of both cones. 
 
\begin{theorem}[Firla--Ziegler {\cite[Theorem 4]{firlaZiegler}}]\label{fz}
A set of cones $C_1,\cdots,C_r$ of dimension $d+1$ contained in a bigger cone $C\subset\R^{d+1}$ form a cone partition of $C$ if and only if there is a generic vector $\g$ contained in exactly one of the cones $C_k$, and for any  $d$-face $F$ of a $(d+1)$-cone $C_i$ that is not contained in the boundary of $C$ there is a second cone $C_j$ with $C_i\cap C_j=F$ such that $F$ is a face of $C_j$.
\end{theorem}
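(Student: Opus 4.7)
The forward direction is immediate from the definitions: in a cone partition, any generic $\g\in C$ (avoiding all facet-hyperplanes of all $C_i$) lies in exactly one top-dimensional cone; and any $d$-face $F$ of $C_i$ that is not in $\partial C$ must border another $C_j$, since otherwise points just on the opposite side of $F$ inside $C$ would be uncovered. So the interesting content is the converse.

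For the converse, my plan is a covering/multiplicity argument. Let $H\subset \inter(C)$ be the union, restricted to the interior of $C$, of all hyperplanes spanned by facets of the $C_i$'s; this is a locally finite union of codimension-one pieces, so $\inter(C)\setminus H$ is open and path-connected. On $\inter(C)\setminus H$ define the integer function
\[
    f(\x) = \big|\{i : \x\in\inter(C_i)\}\big|.
\]
First I would observe that $f$ is locally constant on $\inter(C)\setminus H$, since membership in $\inter(C_i)$ is an open condition defined by strict linear inequalities, none of whose boundary hyperplanes are crossed within a component. Next I would show $f$ is in fact constant across adjacent components of $\inter(C)\setminus H$: at a generic point of a $d$-face $F$ of some $C_i$ with $F\not\subset\partial C$, the hypothesis supplies a unique $C_j$ with $C_i\cap C_j = F$, so crossing $F$ loses exactly the contribution of $C_i$ and gains exactly that of $C_j$, for zero net change. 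Since the hypothesized generic vector $\g$ satisfies $f(\g)=1$, path-connectivity of $\inter(C)\setminus H$ forces $f\equiv 1$ on all of $\inter(C)\setminus H$.

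Step two is to upgrade this to $\bigcup_i C_i = C$ and to disjointness of interiors: disjointness of the $\inter(C_i)$ follows from $f\le 1$ (they cannot overlap on a set of positive measure, hence not at all by openness), and the covering $\bigcup_i C_i = C$ follows from $f\ge 1$ on the dense open set $\inter(C)\setminus H$ together with closedness of each $C_i$ and finiteness of the collection.

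The most delicate step will be the third: showing that for any $i\ne j$ the intersection $C_i\cap C_j$ is a common face of both. Disjointness of interiors already forces $C_i\cap C_j\subseteq\partial C_i\cap\partial C_j$, and $C_i\cap C_j$ is a closed convex cone, so its relative interior lies in the relative interior of a unique face $F_i$ of $C_i$ and a unique face $F_j$ of $C_j$. The plan is to prove $F_i = C_i\cap C_j = F_j$ by contradiction: if, say, some $\x\in \relint F_i \setminus C_j$, then taking $\y\in\relint(C_i\cap C_j)\subseteq\relint F_i$ and moving along the segment from $\y$ toward $\x$, one stays in $\relint F_i\subset\partial C_i$; on the $C_j$ side near $\y$ the function $f$ equals $1$ coming from $C_j$, and on the $\x$ side $f$ must still equal $1$ coming from some other top-cone $C_k$, forcing a further facet-pairing analysis that ultimately contradicts the uniqueness from Step~1. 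This ``walking in a lower-dimensional face'' argument --- rigorously reducing to Step 1 inside the ambient hyperplane of $F_i$ --- is where I expect the main technical work to live, possibly by an induction on dimension applied to the induced cone partition obtained by slicing with the supporting hyperplane of $F_i$.
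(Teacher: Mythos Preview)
The paper does not prove Theorem~\ref{fz}. It is quoted as an external result from Firla--Ziegler~\cite[Theorem~4]{firlaZiegler} and is used as a black box inside the proof of Theorem~\ref{descriptionsemialgebraic}. So there is no proof in this paper to compare your proposal against.

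That said, your covering-number argument is exactly the standard route and matches the original Firla--Ziegler proof in spirit: define the multiplicity $f(\x)=|\{i:\x\in\inter C_i\}|$ on the complement of the facet hyperplanes, show it is locally constant and invariant under wall-crossings using the facet-pairing hypothesis, and anchor it at $1$ via the generic vector $\g$. One point to tighten: in your wall-crossing step you write that the hypothesis supplies a \emph{unique} partner $C_j$ for each interior facet $F$ of $C_i$, but the theorem as stated only asserts existence. You need uniqueness (or at least a bijection between the cones lost and gained at a generic wall point) to conclude that $f$ is unchanged, and at this stage you have not yet proved disjointness of interiors, so you cannot simply say ``two partners would overlap.'' The clean fix is to argue locally at a generic point $p$ of a single facet hyperplane: list all cones having $p$ in a facet, note the hypothesis pairs each with one on the opposite side sharing that exact face, and observe that if two such partners $C_j,C_k$ lay on the same side with $C_i\cap C_j=C_i\cap C_k=F$, then $C_j$ and $C_k$ would share a full-dimensional neighbourhood near $p$, forcing $f\ge2$ there and contradicting $f\equiv1$ once the argument is run; so the pairing is in fact a bijection a posteriori. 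Your Step~3 (that $C_i\cap C_j$ is a \emph{face} of both) is indeed the place where the real work is, and your inductive slicing idea is the right one.
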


\begin{theorem}\label{descriptionsemialgebraic}
Let $\P$ be a partition of $\R^d$ together with a node system $N$. Consider a list of vectors $\x_\v \in \R^{d+1}$ for every node $\v\in N$ that satisfy the following algebraic relationships and inequalities:
\begin{enumerate}[\rm(i)]\itemsep=0pt
\item $\|\x_\v\|=1$ for every node $\v$  in~$N$.
\item $\det(\x_{\v_0},\ldots,\x_{\v_d})> 0$, for every node basis $(\v_0,\ldots,\v_d)$ with $\det(\v_0,\ldots,\v_d)> 0$.
\item $\det(\x_{\v_0},\ldots,\x_{\v_d})= 0$, for every node flat $(\v_0,\ldots,\v_d)$.
\item $\e_0\cdot \x_\v=0$, for any node $\v\in N$ at infinity (i.e.\, in the boundary of $S^d_+$).
\item $\e_0\cdot \x_\v>0$, for any other node $\v\in N$, not at infinity.
\end{enumerate}
Assume also that there is a vector $\g\in \R^{d+1}$  that is generic (i.\,e.\ not contained in a hyperplane spanned by $d$ vectors $\x_{\v_i}$) that belongs to the interior of exactly one of the cones spanned by all vectors $\x_{\v}$ corresponding to the nodes $\v$ that belong to a $d$-face of~$\P_N$.

Then there is a partition $\P'$ that is combinatorially equivalent to $\P$ with a node system given by the points $x_\v$ for $\v \in N$.
\end{theorem}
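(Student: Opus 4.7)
The plan is to construct the regions of $\P'$ by transporting the combinatorial structure of the refinement $\P_N$ from the nodes $\v$ to the replacement vectors $\x_\v$, assembling the resulting pieces into a cone partition of $\R^{d+1}$, and then invoking the Firla--Ziegler characterization (Theorem \ref{fz}) to verify this really is a partition. For each face $G$ of $\P_N$ with node-vertex set $V(G) = G \cap N$, I would define the candidate cone $C'_G = \cone(\x_\v : \v \in V(G))$. Condition (i) places each $\x_\v$ on $S^d$; conditions (iv) and (v) ensure that interior faces of $\P$ lift to cones spanned by vectors with strictly positive first coordinate, while faces at infinity lift into the coordinate hyperplane $\{\e_0\cdot \z=0\}$. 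Condition (ii), applied to the node bases that exist inside every $d$-face of $\P_N$, makes $C'_G$ genuinely $(d+1)$-dimensional whenever $G$ is a $d$-face.

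Next, for each original region $P_i$ I would set $C'_i = \bigcup\{C'_G : G \text{ is a $d$-face of $\P_N$ with }G\subseteq\overline{\widehat P_i}\}$, and analogously $C'_\infty$ for the ``region at infinity''. The key step is to show that $C'_i$ is convex. Two adjacent $d$-faces of $\P_N$ sharing a $(d-1)$-face and both lying in $\overline{\widehat P_i}$ arise in exactly two ways: either from the pyramidal refinement across a half-linear face of~$\P$, or across a $\pi$-angle interface inside a non-pointed region of $\P$. In either case, the nodes supporting the shared $(d-1)$-face together with one node from each side form a node flat in the sense of Definition of flats, so condition (iii) forces $\det(\x_{\v_0},\ldots,\x_{\v_d})=0$. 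Hence $C'_G$ and $C'_{G'}$ meet along a common $d$-dimensional flat, and the union glues convexly. Iterating this flat-gluing across the adjacency graph of $d$-faces of $\P_N$ inside $\overline{\widehat P_i}$, the cone $C'_i$ is convex with the same combinatorial face structure as $C_i$.

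Third, I would verify the hypotheses of Theorem \ref{fz} for $\{C'_1,\ldots,C'_n,C'_\infty\}$. The generic vector $\g$ required there is exactly the vector assumed in the theorem statement: since $\g$ is not on any hyperplane spanned by $d$ of the $\x_\v$, it lies in the interior of a unique top-dimensional cone $C'_G$, and therefore in a unique $C'_i$. The face-matching condition---that each $d$-face of a $C'_i$ that is not on the boundary of $\R^{d+1}$ must coincide with a face of some other $C'_j$---is inherited from the corresponding statement for the cone partition generated by $\P$ itself, because the combinatorial incidences of $d$-faces of $\P_N$ are preserved node-for-node by the assignment $\v\mapsto \x_\v$ (conditions (ii) and (iii) guarantee that dimension and coplanarity relations transfer). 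Theorem \ref{fz} then yields that the $C'_i$ form a cone partition of $\R^{d+1}$. Intersecting with $S^d_+$ and pulling back through the map $\x\mapsto\widehat\x$ produces a proper $n$-partition $\P'$ of $\R^d$ with the postulated nodes. Combinatorial equivalence is then immediate: $\I(\P')=\I(\P)$ by construction, the half-linear faces are preserved because a node in the relative interior of a face is preserved under the assignment, and the orientation matches by condition (ii) together with Lemma \ref{signsofflags}.

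The main obstacle will be the convexity argument for $C'_i$. One must track precisely which internal $(d-1)$-faces of the refinement $\P_N$ inside a single region correspond to node flats (where the new cones are required to glue flatly by (iii)) versus interface faces to adjacent regions (where (ii) keeps them strictly separated). This bookkeeping relies essentially on the classification of how $\P_N$ subdivides $\P$: either by pyramiding over half-linear faces, or across $\pi$-angles in non-pointed regions; in both cases, the defining tuples of nodes are flats, and conversely all flats that arise are of these two kinds. Once this correspondence is pinned down, conditions (ii)--(iii) translate it into a clean geometric statement about the $\x_\v$, and the rest of the argument is essentially an application of Theorem \ref{fz}.
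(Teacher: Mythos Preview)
Your overall architecture is close to the paper's, but you apply Firla--Ziegler (Theorem~\ref{fz}) at a different level, and that creates a genuine gap. The paper does \emph{not} apply Theorem~\ref{fz} to the region-level cones $C'_1,\ldots,C'_n,C'_\infty$. Instead it passes to the barycentric subdivision of $\P_N$, builds \emph{simplicial} cones $\cone(\y'_{G_0},\ldots,\y'_{G_d})$ indexed by complete flags, and applies Theorem~\ref{fz} to that simplicial family $S_X$. The advantages are that simplicial cones have no hidden face structure to verify, and a single determinant sign (Lemma~\ref{samesignsofdets}, derived from condition~(ii)) certifies that each adjacent pair sits on opposite sides of their common facet. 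Only after $S_X$ is known to be a cone partition does the paper assemble the cells $G'$ of $\P_X$ (using Lemma~\ref{samectypeofcones} to match the combinatorics of $G$), and only then are the half-linear pieces glued using condition~(iii).

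Your route requires you to prove, \emph{before} invoking Theorem~\ref{fz}, that each $C'_i=\bigcup_G C'_G$ is a convex polyhedral cone. The ``iterate flat-gluing'' argument does not establish this. First, you have not shown that each $C'_G$ has the face lattice of $\cone(G)$ (this is the content of the paper's Lemma~\ref{samectypeofcones}, which you skip). Second, even if every internal interface between adjacent $C'_G,C'_{G'}$ inside region~$i$ lies in a hyperplane, you have no control over global overlap: two non-adjacent pieces of the same region could intersect in their interiors, and then the union is not a cone at all. Ruling out such overlaps is precisely the job Theorem~\ref{fz} does---so using convexity of $C'_i$ as an input to Theorem~\ref{fz} is circular. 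The same circularity bites the generic-vector hypothesis: the assumption in the theorem is that $\g$ lies in exactly one $C'_G$ (a cone over a $d$-face of $\P_N$), and you cannot upgrade this to ``exactly one $C'_i$'' without already knowing the $C'_G$ are pairwise interior-disjoint. Finally, your classification of internal $(d-1)$-interfaces (``pyramidal refinement'' vs.\ ``$\pi$-angle'') and the claim that the relevant node tuples are always flats is not substantiated; for a half-linear region the internal $(d-1)$-faces of $\P_N$ are pyramids $\mathrm{conv}(\w_F,K)$ over $(d-2)$-faces $K$, and a node frame ending with a node in the adjacent pyramid's base need not be linearly dependent.

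The fix is to do what the paper does: drop to the barycentric level, where (ii) and (iii) translate into sign conditions on determinants of flag-indexed simplices, apply Theorem~\ref{fz} there, and only afterwards reassemble regions and invoke (iii) for the half-linear gluing.
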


\begin{proof}
We want to see first that we can construct a regular CW complex $\P_X$ by taking a face $G'$ for each face $G$ in $\P_N$, where $G'$ is the spherical convex hull of the points $\x_\v$ for all nodes $\v\in G$. Then we will obtain the partition $\P'$ out of the complex $\P_X$.

Consider the barycentric subdivision $\sd \P_N$ of the complex $\P_N$ obtained by taking  points $\y_G$ in the relative interior of each face $G$ of $\P_N$. The maximal simplices of $\sd \P_N$ correspond to complete flags $G_0\subset \cdots\subset G_d$ in $\P_N$ and have $\y_{G_0},\ldots ,\y_{G_d}$ as vertices. Then take a point $\y'_{G}$ in the relative interior of each spherical polyhedral set $G'$ in $\P_X$. 
We want to see that if we construct the family $S_X$ of simplicial cones over the sets $\y'_{G_0},\ldots,\y'_{G_d}$ for each complete flag $G_0\subset \cdots\subset G_d$ in $\P_N$, then we obtain a cone partition of the upper  halfspace of $\R^{d+1}$ (with first coordinate $x_0\geq 0$), by making use of Theorem \ref{fz}.

\begin{lemma}\label{samectypeofcones}
Let $G$ be a $d$-face of $\P_N$. Then the algebraic relationships and inequalities for node frames (of type {\rm (ii)} and {\rm (iii)}) imply that $G'$ is combinatorially equivalent to $G$ as a polyhedral cone.
\end{lemma}

\begin{proof}  
The relationships of type {\rm (iii)} coming from flats tell us that the points $\x_\v$ corresponding to nodes $\v$ on the same $d$-subface of $G$ are all on the same hyperplane and the inequalities of type {\rm (ii)} for node bases tell us that this hyperplane defines a facet of $G'$. Moreover, for each node $\v$, the set of facets on $G$ where it belongs must be similar to the set of facets of~$G'$ where the point $\x_\v$ is contained. 

We can tell which nodes are vertices of $G$ from the set of facets where each node belong. Vertices are in the maximal sets under inclusion, because if a node $\v$ is not a vertex, the set $A_\v$ of facets of $G$ containing $\v$ is determined by the subface of $G$ that contains it, and this is a subset of the set $A_{\v'}$ of facets of $G$ containing a vertex $\v'$ of that subface. Therefore $G$ and $G'$ have the same vertex-facet incidences, and this imply that they are combinatorially equivalent (this is a direct consequence of the analogous result for convex polytopes, see \cite[Lect.~2]{Z}).
\end{proof}  

The cone over $G'$ is subdivided by all cones of the form $\cone(\y'_{G_0},\ldots,\y'_{G_d})$
associated to complete flags $G_0\subset \cdots\subset G_d$ on with $G=G_d$.
By assumption, there is a generic vector $\g$ contained in exactly one of the cones spanned by the  vectors $\x_{\v}$ for all nodes $\v$ that belong to a $d$-face $G$ of the complex $\P_N$. This is precisely the cone over the spherical polyhedron~$G'$.%

Since the vector $\g$ is generic, it will belong to the interior of exactly one of the subcones $\cone(\y'_{G_0},\ldots,\y'_{G_d})$ corresponding to a complete flag with $G=G_d$. By a similar argument, if $G_d\neq G$, it is not possible that $\g$ belong to any other cone corresponding to a flag ending in $G_d$ and $\g$ is in the interior of a unique cone from $S_X$. We conclude that the vector $\g$ belong to the interior of exactly one of the subcones $\cone(\y'_{G_0},\ldots,\y'_{G_d})$ corresponding to a complete flag with $G=G_d$, and therefore $\g$ is in the interior of a unique cone in $S_X$.  

Now we want to see that for any  $d$-face $F$ of a $(d+1)$-cone $C_i$ in $S_X$ that is not contained in the boundary of the upper halfplane in $\R^{d+1}$ there is a second cone $C_j$ with $C_i\cap C_j=F$ such that $F$ is a face of $C_j$.
Notice that the cones spanned by $\y_{G_0},\ldots,\y_{G_d}$ form a simplicial cone partition $S_N$ of the upper halfspace of $\R^{d+1}$, since they arise from a barycentric subdivision of~$\P_N$.%

\begin{lemma}\label{samesignsofdets}
For any complete flag $G_0\subset \ldots\subset G_d$,
the determinant $\det(\y'_{G_0},\ldots,\y'_{G_d})$ has the same sign as the determinant  
$\det(\y_{G_0},\ldots,\y_{G_d})$, 
\end{lemma}

\begin{proof}
By Lemma \ref{signsofflags} we know that the sign of the determinant $\det(\y_{G_0},\ldots,\y_{G_d})$ is the same than the sign of $\det(\v_0,\ldots,\,\v_d)$ for any  node basis $(\v_0,\ldots,\,\v_d)$ in $N$ with $\v_i\in G_i$.  

By the algebraic conditions on the $\x_\v$, this sign is also the same as that of the determinant $\det(\x_{v_0},\ldots,\x_{v_d})$ for any node basis $(\v_0,\ldots,\,\v_d)$ in $N$ with $\v_i\in G_i$. We want to see that the determinant $\det(\y'_{G_0},\ldots,\y'_{G_d})$ also has the same sign.

The fact that $\y'_{G}\in \relint G'$ can be expressed by a linear combination 
\[
        \y'_{G}=\sum_{\v\in N\cap G}\alpha_\v \x_\v,
\] 
where all $\alpha_\v>0$.
Since determinants are multilinear, we can expand as follows.
\[
        \det(\y'_{G_0},\ldots,\y'_{G_d})\ =
        \sum_{(\v_0,\ldots,\,\v_d)}   \Big(\prod_{i=0}^d \alpha_{\v_i}\Big)\det(\x_{v_0},\ldots,\x_{v_d}),
\]
where the sum goes over all lists $(\v_0,\ldots,\,\v_d)$ such that $\v_i\in G_i$, namely the node systems on the flag $G_0\subset \cdots\subset G_d$. We can see that all summands on the right have the same sign as $\det(\y_{G_0},\ldots,\y_{G_d})$ or are zero. 
\end{proof}

Lemma \ref{samesignsofdets} imply that two adjacent cones in $S_X$ don't overlap on their interiors, since the corresponding cones in $S_N$ don't overlap. All $d$-faces of $S_X$ corresponding to faces on the boundary of $S_N$ are also in the boundary of the upper halfspace (due to relationships of type {\rm (iv)}) while a $d$-face of a cone $C_i\in S_X$  corresponding to an interior $d$-face of $S_N$ are always interior (due to the inequalities of type {\rm (v)}), and by the lemma we can find that there is a second cone in $S_X$ such that its intersection with $C_i$ is the corresponding $d$-face, by looking at the cone with analogous property in $S_N$.

Now we are in conditions to use Theorem \ref{fz} to conclude that the cones in $S_X$ don't overlap and make a cone partition of the upper hemisphere. 

Each of the faces $G'$ of the $\P_X$ can be obtained as the intersection of $S^d$ with the union of the cones over sets $\y'_{G_0},\ldots,\y'_{G_k}$ where $G_0\subset \ldots\subset G_k=G$ are partial flags on $\P_N$. We can see that $\P_X$ is a CW complex since the relative interiors of its faces are pairwise disjoint and that the boundary of each face $G'$ is covered by the faces of $\P_X$ contained in $G'$, since by construction we have inclusion between faces $G'_1\subset G'_2$ if and only if the corresponding faces in $\P_N$ satisfy that $G_1\subset G_2$. 
Also the resulting complex $\P_X$ will have the same face poset as $\P_N$. Half-linear faces $F$ of~$\P$ can be obtained as union of faces of $\P_N$, and the union $F'$ of the corresponding faces of $\P_X$ have to be in a linear subspace of the right dimension, due to equations of type {\rm (iii)} that tell that points $\x_\v$ for $\v\in F$ have to be coplanar for all facets of all regions of~$\P$ containing $F$, and besides, $F'$ have in the boundary the same faces at infinity as $F$ (due to equations of type {\rm (iv)}), so $F'$ will be a half-linear face for a new partition $\P'$ that have as faces in its spherical representation the same faces as $\P_X$, but gluing together those faces corresponding to the same half-linear face of $\P$. 

The fact that $\P'$ is a partition of $\R^d$ is a consequence that $S_X$ is a cone partition of the upper halfspace. By Lemma \ref{samesignsofdets} we can find that the $\P$ and $\P'$ have the same orientations, and we conclude that they are combinatorially equivalent as we wanted. 
\end{proof}

The condition of the existence of a vector $\g$ in the interior of only one of the $d$-faces of $\P_X$ is important and cannot be omitted. To see this, consider a  5-partition of $\R^2$ as in the left of Figure \ref{5partitionproblem}, and the choice of points $x_{\v_i}$ depicted on the right.  For simplicity we called the vertices $\v_i$ and all nodes are vertices since the partition is pointed. In that example, all conditions from Theorem \ref{descriptionsemialgebraic} are satisfied, except the existence of the point $\g$. In this case we get that the expected spherical regions form a double covering of the upper hemisphere.
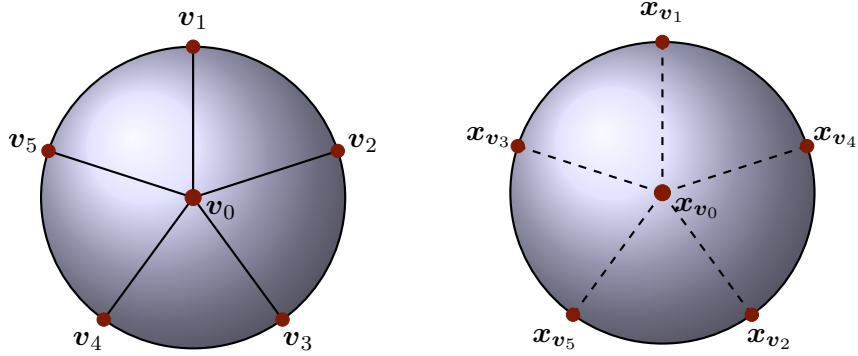
\begin{figure}[htb]\centering

  \begin{tikzpicture} 
    \shade [ball color=blue!15]
    (0,0) circle (2);

    \draw [thick] (0, 0) circle (2);


\def \n {5}
    
\foreach \s in {1,...,\n} {
  \pgfmathsetmacro {\angle}{90-(360/\n * (\s - 1))}
    \draw [thick] (0, 0) -- (\angle:2);  
    \draw [fill,tinto] (\angle: 2) circle (2.5pt);
    \node at (\angle:2.35) {$\v_{\s}$};
}

    \draw [fill,tinto] (0, 0) circle (3pt);
    \node at (-24:0.4) {$\v_{0}$};

  \end{tikzpicture}
\qquad 
  \begin{tikzpicture} 
    \shade [ball color=blue!15]
    (0,0) circle (2);

    \draw [thick] (0, 0) circle (2);


\def \n {5}
    
\foreach \s in {1,...,\n} {
  \pgfmathsetmacro {\angle}{90-2*(360/\n * (\s - 1))}
    \draw [thick, dashed] (0, 0) -- (\angle:2);  
    \draw [fill,tinto] (\angle: 2) circle (2.5pt);
    \node at (\angle:2.4) {$\x_{\v_{\s}}$};
    
}

    \draw [fill,tinto] (0, 0) circle (3pt);
    \node at (-24:0.5) {$\x_{\v_{0}}$};
  \end{tikzpicture}

    \caption{Nodes of a $5$-partition together with points $\x_\v$ that satisfy all algebraic relationships and inequalities in Theorem \ref{descriptionsemialgebraic} but don't make a new $5$-partition.}
\label{5partitionproblem} 
\end{figure}
   
\begin{proposition}\label{PNspace}
Let $\P$ be an $n$-partition of $\R^d$. The space of pairs $(\P',N')$ of partitions $\P'$ combinatorially equivalent to together with a node system $N'$ on $\P'$ is a semialgebraic set.%
\end{proposition}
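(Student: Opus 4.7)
The plan is to identify the space of pairs $(\P', N')$ with the solution set of the explicit algebraic system of Theorem \ref{descriptionsemialgebraic}, and then argue that this system cuts out a semialgebraic set by direct inspection together with one application of Tarski--Seidenberg. Fix a node system $N$ on $\P$. Since combinatorial equivalence fixes the face poset, the set of half-linear faces, and the orientation, for any partition $\P'$ combinatorially equivalent to $\P$ and any node system $N'$ on $\P'$ there is a canonical bijection between $N$ and $N'$. Consequently a pair $(\P',N')$ is encoded faithfully by the tuple $(\x_\v)_{\v\in N}\in(\R^{d+1})^{|N|}$ of positions of the nodes of $N'$, and the content of Theorem \ref{descriptionsemialgebraic} is that this encoding identifies the space of pairs with the set of tuples satisfying conditions (i)--(v) together with the generic-vector hypothesis.

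First I would handle the easy part. Conditions (i), (iii), and (iv) are finite collections of polynomial equalities, and conditions (ii) and (v) are finite collections of strict polynomial inequalities, all in the entries of the $\x_\v$. The conjunction of these defines a semialgebraic set $S_0\subseteq(\R^{d+1})^{|N|}$. Next I would encode the generic-vector condition. For each $d$-face $G$ of the complex $\P_N$, the assertion that an auxiliary point $\g\in\R^{d+1}$ lies in the relative interior of the cone spanned by $\{\x_\v:\v\in G\}$ is equivalent to the existence of positive scalars $\alpha_\v$ with $\g=\sum_{\v\in G}\alpha_\v\x_\v$; projecting out the $\alpha_\v$ by Theorem \ref{tarskiseidenberg} gives a semialgebraic set $U_G\subseteq(\R^{d+1})^{|N|}\times\R^{d+1}$. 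The condition ``$\g$ lies in exactly one such interior'' is then the Boolean combination $\bigcup_G\bigl(U_G\cap\bigcap_{G'\neq G}U_{G'}^c\bigr)$, which is semialgebraic since semialgebraic sets are closed under finite unions, intersections, and complements. The genericity clause (that $\g$ lies on no hyperplane spanned by $d$ of the $\x_\v$) is a conjunction of non-vanishing determinant conditions in $(\x_\v,\g)$, hence semialgebraic. Let $S_1$ be the intersection of these conditions in $(\R^{d+1})^{|N|}\times\R^{d+1}$; projecting out $\g$ with Theorem \ref{tarskiseidenberg} yields a semialgebraic set $S_1'\subseteq(\R^{d+1})^{|N|}$.

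The space of pairs $(\P',N')$ is then $S_0\cap S_1'$, which is semialgebraic as a finite intersection of semialgebraic sets. The main subtlety I expect to navigate is the faithful translation of the verbal genericity clause of Theorem \ref{descriptionsemialgebraic} into a first-order formula over the reals before applying projection: in particular, formulating ``$\g$ belongs to the interior of exactly one cone'' requires taking complements of existentially defined sets, which is legitimate for semialgebraic sets but easy to mishandle. Once this encoding is in place, the conclusion follows from the standard closure properties of semialgebraic sets combined with Tarski--Seidenberg.
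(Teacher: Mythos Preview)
Your proposal is correct and follows essentially the same route as the paper: both invoke Theorem~\ref{descriptionsemialgebraic} to identify the space of pairs $(\P',N')$ with the solution set of conditions (i)--(v) together with the existential $\g$ clause, introduce slack variables for $\g$, and then appeal to Tarski--Seidenberg (Theorem~\ref{tarskiseidenberg}) and the closure of semialgebraic sets under finite Boolean combinations. You spell out the encoding of the ``exactly one cone'' and genericity clauses in more detail than the paper's terse sentence, but the strategy is identical.
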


\begin{proof}
Theorem \ref{descriptionsemialgebraic} gives an algebraic description by equations and inequalities of a set that parameterizes all these pairs, under the condition of the existence of the point $\g$. Notice that if a partition $\P'$ is combinatorially equivalent to $\P$, then any node system give rise to an equivalent system of equations and inequalities, and therefore it satisfies the system given by~$\P$. The condition about the point $\g$ can be also given as a system of algebraic conditions after introducing new slack variables for $\g$. We recall that unions and intersections of semialgebraic sets are semialgebraic. Then by Theorem \ref{tarskiseidenberg} we find that the set we are interested in is semialgebraic.
\end{proof} 

\begin{definition}[Realization spaces]
  The \emph{realization space} of an $n$-partition $\P$ is the subspace of $\C(\R^{d},n)$ of all partitions $\P'$ with the same combinatorial type as $\P$. It is denoted as $\C_{\P}(\R^d,n)$. 
\end{definition}

\begin{theorem}\label{semialgebraic}
Let $\P$ be an $n$-partition of $\R^d$. Then the realization space $\C_{\P}(\R^d,n)$ is a semialgebraic set.
\end{theorem}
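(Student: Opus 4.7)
The plan is to deduce Theorem \ref{semialgebraic} from Proposition \ref{PNspace} via a Tarski--Seidenberg projection. I parameterize $\C_\P(\R^d,n) \subseteq \C_{A(\P)}(\R^d,n)$ by the vectors $\c_{ij} \in S^d$ for $\{i,j\}\in A(\P)$ as in Theorem \ref{mainsemialgebraictheorem}, and I work in the product space of tuples $(\c, (\x_\v)_{\v \in N}, \g)$, where the $\x_\v \in S^d$ are candidate node coordinates and $\g \in \R^{d+1}$ is a slack variable for the generic vector required by Theorem \ref{descriptionsemialgebraic}.

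First I would define a subset $W$ of this product space by imposing the following polynomial equalities and inequalities: (a)~$\c \in \HH_{A(\P)}(\R^d,n)$, which is semialgebraic by Theorem \ref{mainsemialgebraictheorem}; (b)~incidence conditions $\c_{ij}\cdot\x_\v = 0$ together with appropriate strict sign conditions for the remaining indices, dictated by the combinatorial type of $\P$ and expressing that each $\x_\v$ lies in the relative interior of the prescribed half-linear face of the partition $\pi(\c)$; (c)~the algebraic relations and sign conditions (i)--(v) on the $\x_\v$ from Theorem \ref{descriptionsemialgebraic}; (d)~the genericity and unique-containment condition on $\g$ from that same theorem. Each item is semialgebraic, so $W$ is semialgebraic.

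Next I would verify that the projection of $W$ onto the $\c$ coordinates is exactly $\C_\P(\R^d,n)$. For any $\P' \in \C_\P(\R^d,n)$, Proposition \ref{np1} produces a node system $N'$, and together with a generic $\g$ this yields a point of $W$ lying above the parameter $\c$ of $\P'$. Conversely, if $(\c,\x,\g) \in W$, then Theorem \ref{descriptionsemialgebraic} asserts that the $\x_\v$ form a node system of some partition combinatorially equivalent to $\P$, and the incidence conditions in~(b) force this partition to coincide with $\pi(\c)$. Applying Tarski--Seidenberg (Theorem \ref{tarskiseidenberg}) to the projection $W \to \{\c\}$ then yields that $\C_\P(\R^d,n)$ is semialgebraic.

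The main obstacle is the bookkeeping in item~(b): one must translate, for the fixed combinatorial type, the incidence data between the nodes and the faces of the arrangement $\A_\c$ into explicit polynomial constraints in the mixed $(\c,\x)$ coordinates, and one must verify that these constraints, combined with those of Theorem \ref{descriptionsemialgebraic}, suffice to pin down the combinatorial type of $\pi(\c)$ exactly. All of the ingredients are already present in Proposition \ref{PNspace} and Theorem \ref{descriptionsemialgebraic}; the remaining work is to assemble them into a single semialgebraic description.
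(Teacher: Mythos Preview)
Your proposal is correct and follows the same underlying strategy as the paper: deduce the result from Proposition~\ref{PNspace} via Tarski--Seidenberg. The paper's argument is somewhat more streamlined. Rather than building a joint space in the coordinates $(\c,\x,\g)$ and projecting onto~$\c$, the paper observes that one can map the semialgebraic set of pairs $(\P',N')$ from Proposition~\ref{PNspace} directly into $\R^{h(d+1)}$ (the space of normals of the $h$ hyperplanes supporting the $(d-1)$-faces) by a polynomial map, and then invokes the image formulation of Tarski--Seidenberg. This bypasses your condition~(a) and the incidence bookkeeping in~(b): since the separating hyperplane of each pair of adjacent regions is spanned by the nodes on that face, its normal is a polynomial (determinantal) expression in the node coordinates, so the map $(\x_\v)\mapsto(\c_{ij})$ is already polynomial and no auxiliary constraints linking $\c$ and $\x$ are needed. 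Your graph-and-project construction is of course the standard way to reduce the image version to the projection version, so the two arguments are equivalent in content; the paper's phrasing simply avoids spelling out the incidence relations that you flag as the main obstacle.
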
 

\begin{proof} 
Proposition \ref{PNspace} shows that for pointed partitions $\P$ the  space $\C_{\P}(\R^d,n)$ is semialgebraic, since all vertices are nodes, and there is a unique node system on each partition in the realization space.
In general, the realization space of~$\P$ can be obtained as the image of the space of pairs $(\P',N')$ described in Proposition \ref{PNspace} to the space $\R^{h(d+1)}$ describing by the equations of the $h$ hyperplanes that define $(d-1)$-faces of the partition, where each partition corresponds a unique point.  We make use of an equivalent formulation of Theorem \ref{tarskiseidenberg} that claims that the image under a polynomial mapping $f:\R^m\rightarrow \R^{m'}$  of a semialgebraic set is semialgebraic (see \cite[Proposition 2.83]{basu}). 
\end{proof}

This result gives us an alternative proof of the fact that spaces of $n$-partitions $\C(\R^{d},n)$ are union of semialgebraic pieces, the union of all realization spaces of $n$-partitions of~$\R^d$ is equal to $\C(\R^d,n)$.


\section{Examples}\label{examples}

We will analyze here the spaces of $n$-partitions for small values of $n$ and $d$. 
For $n=1$ the space of partitions $\C(\R^{d},1)$ will simply consists of one point. 
A more interesting but still easy case is $n=2$.

\begin{proposition}\label{sphereford2}
The space $\C(\R^{d},\le\!2)$ is homeomorphic to the sphere $S^d$. The space of partitions $\C(\R^{d},2)$ is homotopy equivalent to $S^{d-1}$ and is obtained from $\C(\R^{d},\le\!2)$ by removing two points. 
\end{proposition}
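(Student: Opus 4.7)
The plan is to make explicit the map $\pi\colon (S^d)^{\binom{n}{2}}\to\C(\R^{d},\le\! n)$ from the proof of Theorem \ref{compactness} in the case $n=2$. Here $(S^d)^{\binom{2}{2}}=S^d$, and for $\c=(c_0,c_1,\ldots,c_d)\in S^d$ the map produces the 2-partition with
\[
P_1(\c)=\{\x\in\R^d:\c\cdot\tbinom{1}{\x}<0\},\qquad P_2(\c)=\{\x\in\R^d:\c\cdot\tbinom{1}{\x}>0\}.
\]
I want to show that $\pi$ is a continuous bijection from $S^d$ onto $\C(\R^{d},\le\! 2)$; since $S^d$ is compact and $\C(\R^{d},\le\! 2)$ is Hausdorff (being a metric space under $d_\mu$), such a map is automatically a homeomorphism.

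First I would argue that every element of $\C(\R^{d},\le\! 2)$ comes from $\pi$. By Proposition \ref{polyhedral}, each region of a 2-partition is the interior of a polyhedron with at most one facet, so it must be empty, an open halfspace, or all of $\R^d$. Hence every proper 2-partition is determined by a single oriented affine hyperplane in $\R^d$, while the two non-proper 2-partitions are $(\emptyset,\R^d)$ and $(\R^d,\emptyset)$. Next I would check injectivity: the points $\c\in S^d$ with $(c_1,\ldots,c_d)\neq\mathbf 0$ parameterize oriented affine hyperplanes in $\R^d$ uniquely under the normalization $\|\c\|=1$, and the two remaining points $\c=\pm\e_0$ map respectively to $(\emptyset,\R^d)$ and $(\R^d,\emptyset)$. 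Continuity of $\pi$ was already established inside the proof of Theorem \ref{compactness}, so this gives the homeomorphism $\C(\R^{d},\le\! 2)\cong S^d$.

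For the second statement, $\C(\R^{d},2)\subset\C(\R^{d},\le\! 2)$ is precisely the set of proper partitions, which under $\pi^{-1}$ corresponds to $S^d\setminus\{\pm\e_0\}$. The twice-punctured sphere deformation retracts onto its equator $S^{d-1}$ (for instance by radially projecting away from the $\e_0$-axis), yielding $\C(\R^d,2)\simeq S^{d-1}$.

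The routine part is recognizing that 2-partitions are parameterized by oriented affine hyperplanes; the only point that requires a little care is identifying the two degenerate fibres, namely verifying that $(c_1,\ldots,c_d)=\mathbf 0$ on $S^d$ forces $\c=\pm\e_0$ and that these two values of $\c$ produce precisely the two non-proper partitions. No step looks genuinely difficult.
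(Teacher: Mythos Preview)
Your proposal is correct and follows essentially the same approach as the paper: parameterize $\C(\R^{d},\le\!2)$ by the single normal vector $\c_{12}\in S^d$, identify the two exceptional points $\pm\e_0$ with the non-proper partitions, and read off the homotopy type of the complement. Your write-up is in fact more careful than the paper's sketch, since you make explicit the compact-to-Hausdorff argument for the homeomorphism and the deformation retraction onto the equator.
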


\begin{proof}
To parameterize our space of $2$-partitions for fixed $d$ we only need to choose the coordinates $\c_{1,2}$, that describe the normal to the hyperplane $H_{ij}$ by a point in $S^n$. Two special cases have to be taken into account that characterize the cases when the combinatorial type of the $2$-partition is not the generic one. These are precisely when $\c_{ij}=\pm (1,0,\ldots,0)$. In those cases, there is no hyperplane in $\R^d$, representing the partitions with only one (labeled) non-empty region. These extreme partitions can be obtained as a limit of proper $2$-partitions, and $S^d$ will handle the topological structure of $\C(\R^{d},\le\!2)$ in the right way.
\end{proof}

For $n\ge 3,$ things begin to be more complicated, even in the case of $d=1$.

\begin{proposition}\label{nfactorial}
 The space $\C(\R^{1},\le\!n)$ is homeomorphic to a CW complex with $n$ vertices and  $k!\binom{n}{k}$ simplicial $(k-1)$-cells for $0\le k\le n$. It is made out of $n!$ simplices of dimension $(n-1)$ glued appropriately on the boundaries. The space $\C(\R^{1},n)$ is homeomorphic to $n!$ open $(n-1)$-balls.  
\end{proposition}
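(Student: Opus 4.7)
The starting observation is that the non-empty open convex subsets of $\R^1$ are exactly the open intervals (bounded or unbounded). By Proposition \ref{polyhedral}, each region of a partition $\P\in \C(\R^{1},n)$ is an interval, and since the closures tile $\R^1$, there is a unique permutation $\sigma\in \Symm_n$ and a unique sequence $-\infty=x_0<x_1<\cdots<x_{n-1}<x_n=+\infty$ such that $P_{\sigma(i)}=(x_{i-1},x_i)$ for $1\le i\le n$. First I would set up this bijection and use it as the basic coordinate chart on $\C(\R^{1},n)$.

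For each fixed $\sigma\in\Symm_n$, the breakpoint sequences form the open simplex
\[
   \Delta_\sigma^\circ\ =\ \{(x_1,\ldots,x_{n-1})\in\R^{n-1}: x_1<\cdots<x_{n-1}\},
\]
a convex open subset of $\R^{n-1}$ and hence an open $(n-1)$-ball. The next step is to check that the bijection $\Delta_\sigma^\circ\to \C(\R^{1},n)$ is a homeomorphism onto its image in the $d_\mu$-topology: small Euclidean perturbations of the breakpoints change each region by a set contained in a small neighborhood of the old and new breakpoints, which has small $\mu$-measure since $\mu$ is finite and given by a continuous density, and conversely, $d_\mu$-convergence within a fixed $\sigma$-class forces the breakpoints to converge (otherwise some region would have a nontrivial symmetric difference in the limit). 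Since different permutations yield disjoint classes of partitions, this establishes $\C(\R^{1},n)\cong \bigsqcup_{\sigma\in \Symm_n}\Delta_\sigma^\circ$, a disjoint union of $n!$ open $(n-1)$-balls.

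For $\C(\R^{1},\le\!n)$, I would allow some of the strict inequalities to become equalities and allow $x_1=-\infty$ or $x_{n-1}=+\infty$; these degenerations correspond to regions $P_{\sigma(i+1)}$, $P_{\sigma(1)}$, or $P_{\sigma(n)}$ becoming empty. Using the spherical representation $\widehat x_i\in \overline{S^1_+}$, the closed parameter space becomes a closed $(n-1)$-simplex $\Delta_\sigma$ for each $\sigma$. A non-proper partition with exactly $k$ non-empty regions is then encoded by an ordered list $\tau=(\tau_1,\ldots,\tau_k)$ of $k$ distinct labels from $\{1,\ldots,n\}$ (reading the surviving labels left-to-right) together with a strict sequence of $k-1$ finite breakpoints, giving an open $(k-1)$-simplex $\Delta_\tau^\circ$. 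The count of such $\tau$ is $k!\binom{n}{k}$, yielding $n$ vertices (for $k=1$) and $n!$ top-dimensional simplices (for $k=n$).

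To describe the CW structure, I would observe that each face of $\Delta_\sigma$ is obtained by selecting a subset of positions to remain non-empty, and this face is canonically identified with $\Delta_{\tau}$ where $\tau$ is the corresponding sublist of $\sigma$. Two top simplices $\Delta_\sigma$ and $\Delta_{\sigma'}$ are glued along the face $\Delta_\tau$ precisely when both admit $\tau$ as a sublist after deleting some positions; the attaching maps are the obvious simplicial face maps. The topological match between this CW structure and the $d_\mu$-topology follows from the same continuity argument as in the proper case, extended across the boundary: a sequence in $\Delta_\sigma^\circ$ converges in $d_\mu$ to a limit in $\Delta_\tau^\circ\subseteq\Delta_\sigma$ exactly when the breakpoint parameters converge in the closed simplex (using again the continuity and finiteness of $\mu$, together with compactness of $\overline{S^1_+}$ to handle breakpoints escaping to $\pm\infty$). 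The principal obstacle is precisely this last step: checking that the quotient/gluing topology on the union of closed simplices agrees with the $d_\mu$-topology at boundary points where several top cells meet. Once this is in place, the two claims---$\C(\R^{1},n)$ is $n!$ disjoint open $(n-1)$-balls, and $\C(\R^{1},\le\!n)$ is the described CW complex---follow directly.
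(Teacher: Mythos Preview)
Your proposal is correct and follows essentially the same route as the paper: parameterize each combinatorial type by the ordered list of breakpoints, compactify the line to a closed interval so that the parameter space becomes a closed $(n-1)$-simplex, and identify boundary faces with the realization spaces of the degenerate types. The only cosmetic difference is that the paper uses an order-preserving homeomorphism $\R\to(0,1)$ to compactify, whereas you use the spherical representation $\widehat x_i\in\overline{S^1_+}$; these are equivalent. You are also more explicit than the paper about checking that the simplicial/gluing topology matches the $d_\mu$-topology at the boundary faces, which the paper treats informally.
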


\begin{proof}
For a combinatorial type with $k$ non-empty regions, its realization space is contractible and can be realized as a $(k-1)$-simplex. To do this, take an order preserving homeomorphism from $\R$ to the open interval $(0,1)$. Then the coordinates of the $k-1$ interior vertices (hyperplanes!) $\v_{i,j}=F_{i,j}\in\R$ need to be in a prescribed order, and via the homeomorphism we can map any partition to a point inside a $(k-1)$ -simplex contained in the unit cube $(0,1)^{k-1}$. 

For example, if the $n$-partition have the region $i$ at the left of region $i+1$ for all $i<n$ (and no empty region) then we only need to specify the coordinates of the vertices $\v_{i,i+1}$ such that $\v_{1,2}\le\ldots\le\v_{n-1,n}$. Mapping these $n-1$ values to the unit cube $(0,1)^{n-1}$ via the homeomorphism, we identify the realization space of this particular $n$-partition with the interior of an $(n-1)$-simplex.

The boundary of each of those simplices will represent the case when some of the points coincide, and can be naturally identified with the realization spaces of other combinatorial types with some extra empty regions. In this way we give to $\C(\R^{1},\le\!n)$ the structure of a regular cell complex (start with $n$ vertices corresponding to the realization spaces of partitions with only one non-empty region, and then for higher dimensions, identify the boundary with a subspace of the union of the cells of smaller dimension).  

There will be $n!$ combinatorial types without empty regions. The space $\C(\R^{1},n)$ of proper partitions of $\R$ is the union of the interior of all those simplices. All other combinatorial types can be obtained in the limit (in the boundary) of those proper combinatorial types and therefore $\C(\R^{1},\le\!n)$ will have $n!$ top-dimensional simplicial $(n-1)$-cells, and $\binom{n}{k}k!$ cells of dimension $k-1$.  
\end{proof}

\begin{example}
The space $\C(\R^{1},\le\!3)$ is homeomorphic to a two dimensional space made out topologically by gluing six simplices along the boundaries in a special way, since there are two different edges joining each pair of vertices. The vertices represent the partitions with one non-empty region, and the edges represent the partitions with two non-empty regions.
\begin{figure}[htb]\centering
  
  \begin{tikzpicture}
\foreach \x/\y/\pa/\pb/\pc in {0/0/1/2/3, 3/0/1/3/2, 6/0/2/1/3, 0/-3/2/3/1, 3/-3/3/1/2, 6/-3/3/2/1}{ 
    \node[]  (na) at (\x,\y)  {};           
    \node[]  (nb) at ($(na)+(1.5,0)$)  {}; 
    \node[]  (nc) at ($(nb)+(0,1.5)$) {};  

    \fill[gray]  ($(na)$)--($(nb)$)--($(nc)$)--cycle;
    \draw [thick]  ($(na)$)--($(nb)$) node[midway,sloped,below] {\small $\pb\pc$}--($(nc)$) node[midway,sloped,below] {\small $\pa\pb$}--($(na)$) node[midway,sloped,above] {\small $\pa\pc$};

    \fill (na) circle (2pt);
    \fill (nb) circle (2pt);
    \fill (nc) circle (2pt);

 \node    at ($(\x,\y)+(-0.3,-0.2)$) {\small $\pc$};
 \node    at ($(\x,\y)+(1.7,-0.2)$) {\small $\pb$};
 \node    at ($(\x,\y)+(1.7,1.8)$) {\small $\pa$};
 \node    at ($(\x,\y)+(1,0.4)$) {\small $\pa\pb\pc$};
  
}
\end{tikzpicture}

\caption{Simplices to build a cell complex homeomorphic to $\C(\R^{1},3)$.}\label{3partitionsofr}
\end{figure}
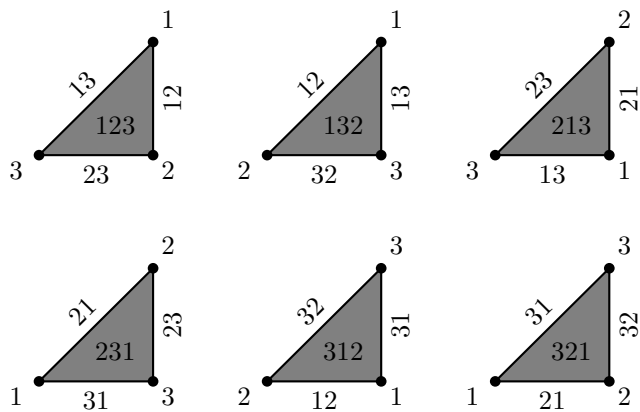

In Figure \ref{3partitionsofr} we can see the six simplices of this CW complex with the corresponding labels on the different cells. These simplices have to be glued along the edges corresponding  to the same partitions, in such way that the corresponding vertices coincide. Each edge appears in three of the simplices.
\end{example}

As a further example, in \cite[Section 7.3]{thesis} we give a complete description of
the space $\C(\R^{2},3)$ as well as a cell complex model for its closure $\C(\R^{2},\le\!3)$.
 
 
\section{Further results}\label{furtherresults} 

Inside the space $\C(\R^d,n)$ there are other spaces that catch our attention, such as the subspace $\C_{\reg}(\R^d,n)$ of regular partitions, which can be obtained by projecting the facets of a convex polyhedron one dimension higher. Regular partitions appear in different contexts and are much better understood than general partitions, since they are easier to generate and parameterize. We would like to know how the space of regular partitions is embedded in the space of all convex $n$-partitions. 

We find that there is a big difference between the case $d=2$ and the case when $d\geq 3$.  For $d=2$  and large $n$, the subspace $\C_{\reg}(\R^{2},n)$ of regular $n$-partitions has much smaller dimension than $\C(\R^{2},n)$, as it can be seen from the following results (see \cite{thesis}). 

\begin{theorem}\label{mdim2}
For $n\ge3$ the space $\C(\R^{2},n)$ of partitions of $\R^2$ into $n$ convex pieces has dimension 
$\dim \C(\R^{2},n)=4n-7$. 
The partitions whose realization spaces attain the top dimension are simple with exactly three unbounded regions.
\end{theorem}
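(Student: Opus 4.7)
The plan is to bound $\dim \C_\P(\R^2, n)$ for every combinatorial type $\P$ (using that $\C(\R^2,n)$ is their finite union by Theorem \ref{semialgebraic}) and then maximize. First I reduce to essential partitions: a non-essential $\P$ has $F_{I({\bf 0})}$ nontrivial, forcing all separating lines to share a direction, so such partitions are given by $n-1$ parallel lines and their realization spaces have dimension only $n<4n-7$ for $n\ge 3$. For essential $\P$, I parameterize $\C_\P(\R^2,n)$ by the affine line supporting each 1-face (distinct for distinct edges, since a combinatorial type with collinear 1-faces would impose an extra algebraic equation and lower the dimension). This contributes $2E_I$ free parameters, where $E_I$ is the number of 1-faces of $\hat\P$ not on the equator. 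Vertex incidences impose codimensions: each interior vertex of degree $r_\v$ (i.e., where $r_\v$ regions meet) gives $r_\v-2$ concurrency equations, and each vertex at infinity of degree $s_\v$ gives $s_\v-3$ parallelism equations (from the $s_\v-2$ incident half-linear edges being forced to share a direction), so
\[
\dim \C_\P(\R^2,n) \;=\; 2 E_I - \sum_{\v\text{ int}}(r_\v-2) - \sum_{\v\text{ inf}}(s_\v-3).
\]

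Next I invoke Euler's formula on the spherical representation $\hat\P$ viewed as a CW complex on $S^2$ with $n+1$ faces, $V+V_\infty$ vertices, and $E_I+V_\infty$ edges (the equator carries $V_\infty$ arcs): this gives $E_I=V+n-1$. Combined with the vertex-edge incidence identity $\sum_{\v\text{ int}} r_\v+\sum_{\v\text{ inf}} s_\v = 2(E_I+V_\infty)$, the expression above collapses to the clean formula
\[
\dim \C_\P(\R^2,n) \;=\; 2V + V_\infty.
\]
Since every vertex of $\hat\P$ has degree at least $3$, the incidence identity yields $V+V_\infty\le 2n-2$, with equality exactly when $\P$ is simple. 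Hence $\dim \C_\P(\R^2,n)\le 2(2n-2-V_\infty)+V_\infty = 4n-4-V_\infty$, with equality iff $\P$ is simple.

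It remains to establish $V_\infty\ge 3$ whenever $n\ge 3$. If $V_\infty\in\{0,1\}$, then the closure of some unbounded region $P_i$ projects onto the whole equator, so $\rec(P_i)=\R^2$ and thus $P_i=\R^2$, contradicting $n\ge 2$. For $V_\infty=2$, both arcs must have length exactly $\pi$ (a longer arc would correspond to a non-convex recession cone), so both unbounded regions are translates of complementary closed half-planes whose closures already cover $\R^2$, leaving no room for bounded regions and forcing $n=2$. Therefore $V_\infty\ge 3$ and $\dim \C_\P(\R^2,n)\le 4n-7$, with equality precisely for the simple essential partitions with exactly three unbounded regions. Such partitions exist for every $n\ge 3$ (start from the three-ray ``Mercedes'' $3$-partition and iteratively cut a simple bounded triangle off one unbounded region without creating any vertex of degree above $3$), showing that the bound is attained. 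The main delicate point in the argument is justifying the transversality claim in the dimension count: one must verify that the concurrency and parallelism equations at distinct vertices cut out a smooth subvariety of the expected codimension at a generic simple partition, so that ``parameters minus constraints'' indeed equals $\dim \C_\P(\R^2,n)$; this can be handled by producing explicit infinitesimal deformations moving one line at a time while adjusting its neighbors to preserve all incidences.
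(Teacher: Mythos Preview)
The paper does not prove Theorem~\ref{mdim2} in the text; it is stated in Section~\ref{furtherresults} with a reference to the thesis~\cite{thesis}, so there is no in-text argument to compare against. Your overall strategy---reduce to essential types, derive $\dim \C_\P(\R^2,n)=2V+V_\infty$ via Euler's formula, bound $V+V_\infty\le 2n-2$ from the degree condition, and exhibit a simple type with three unbounded regions---is sound and efficient. (Incidentally, the transversality issue you flag as delicate disappears if you parameterize by vertex positions rather than by supporting lines: placing the $V$ interior vertices freely in~$\R^2$ and the $V_\infty$ vertices at infinity freely on~$S^1$ gives $2V+V_\infty$ parameters subject only to open conditions, so no constraint-independence argument is needed.)

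There is, however, a concrete error in your treatment of the case $V_\infty=2$. You assert that the two unbounded regions are ``complementary'' half-planes whose closures cover~$\R^2$, but having complementary \emph{recession cones} only forces their boundary lines to be parallel, not equal. For instance, the $4$-partition
\[
P_1=\{y>1\},\quad P_2=\{y<-1\},\quad P_3=\{|y|<1,\ x<0\},\quad P_4=\{|y|<1,\ x>0\}
\]
is essential (one checks $\overline{\widehat P_3}\cap\overline{\widehat P_4}$ misses the equator, so $F_{I({\bf 0})}=\emptyset$) and has $V_\infty=2$. Thus the blanket claim ``$V_\infty\ge3$ whenever $n\ge3$'' is false for essential partitions.

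The repair is to separate by simplicity. If $\P$ is \emph{simple} and $V_\infty=2$, then each vertex at infinity has exactly one incident interior edge, and that edge must separate the two arc-regions $P_1,P_2$; hence it lies in $\partial P_1\cap\partial P_2$, forcing the two parallel boundary lines to coincide and therefore $n=2$. So simple essential partitions with $n\ge3$ do satisfy $V_\infty\ge3$, and for them $\dim=4n-4-V_\infty\le4n-7$. If $\P$ is \emph{not} simple, the degree-sum inequality sharpens to $V+V_\infty\le 2n-3$, whence $\dim=2V+V_\infty\le 4n-6-V_\infty\le4n-8$, using only $V_\infty\ge2$ (which already follows from the regularity of the CW structure for essential partitions, Theorem~\ref{cwcomplex}). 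This closes the gap and simultaneously gives the characterisation: the top dimension $4n-7$ is attained exactly by the simple types with $V_\infty=3$, i.e.\ with exactly three unbounded regions.
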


\begin{theorem}\label{regulardimension}
For $d\geq 2$ and $n\geq 2$, the space $\C_{\reg}(\R^{d},n)$ of regular partitions is a semialgebraic set of dimension
\[ 
\dim \C_{\reg}(\R^{d},n)=(d+1)(n-1)-1.
\]
In particular, the space of regular $n$-partitions of the plane has dimension $\dim \C_{\reg}(\R^{2},n)=3n-4$. 
\end{theorem}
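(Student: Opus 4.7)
The plan is to parameterize $\C_{\reg}(\R^d,n)$ as the image of an explicit semialgebraic map from a space of tuples of affine functions, identify the symmetry group acting on the fibers, and read off the dimension from the difference.

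Recall that a partition $\P=(P_1,\ldots,P_n)$ is regular precisely when there exist affine functions $f_i(\x)=\a_i\cdot\x+b_i$ with $\a_i\in\R^d$ and $b_i\in\R$, such that $P_i=\{\x\in\R^d: f_i(\x)>f_j(\x)\text{ for all }j\ne i\}$; this is equivalent to the polyhedral-lifting definition via the substitution $\p_i=\a_i/2$, $h_i=b_i+\|\a_i\|^2/4$. We thus obtain a parameter space $\Lin:=\R^{n(d+1)}$ together with a map $\Phi:\Lin\to \C(\R^d,\le n)$. Composing $\Phi$ with the hyperplane parameterization $\pi$ of Section~\ref{semialgsection} expresses the normal vectors $\c_{ij}$ as rational (after normalization, semialgebraic) functions of the parameters, so $\Phi$ is a semialgebraic map. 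By Tarski--Seidenberg (Theorem~\ref{tarskiseidenberg}), the image $\C_{\reg}(\R^d,n)=\Phi(\Lin)\cap \C(\R^d,n)$ is semialgebraic.

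For the upper bound on dimension, the $(d+2)$-dimensional group $G=\R_{>0}\times \R^{d+1}$ acts on $\Lin$ via
\[
(\lambda,(\a,c))\cdot(\a_i,b_i)_{i=1}^n \ =\ (\lambda\a_i+\a,\,\lambda b_i+c)_{i=1}^n,
\]
i.e., by replacing each $f_i$ with $\lambda f_i+\a\cdot\x+c$; this transformation preserves all strict inequalities $f_i>f_j$, so the $G$-orbit of any parameter lies inside a single fiber of $\Phi$. On the open subset $U\subset\Lin$ of parameters producing a proper $n$-partition, the $G$-action is free, so every fiber of $\Phi|_U$ has dimension at least $d+2$, giving
\[
\dim\C_{\reg}(\R^d,n)\ \le\ \dim\Lin-\dim G\ =\ n(d+1)-(d+2)\ =\ (d+1)(n-1)-1.
\]
For the matching lower bound, I would exhibit a single proper regular partition $\P_0$ whose fiber $\Phi^{-1}(\P_0)$ is exactly a single $G$-orbit; then a small transversal to this orbit injects into $\C_{\reg}(\R^d,n)$ and achieves the full dimension. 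A convenient candidate is the Voronoi partition of $n$ sufficiently generic points. The core lemma is elementary: two non-zero affine functions on $\R^d$ with the same zero hyperplane and the same sign pattern differ by a positive scalar factor. Applied to $f_i-f_j$ and $g_i-g_j$ along each adjacent pair yields $g_i-g_j=\alpha_{ij}(f_i-f_j)$ with $\alpha_{ij}>0$, and a triangle-compatibility calculation on any three mutually adjacent regions forces $\alpha_{ij}=\alpha_{jk}=\alpha_{ik}$. Connectedness of the adjacency graph $A(\P_0)$ then propagates this to a single global $\lambda>0$, and the remaining freedom in choosing $g_i-\lambda f_i$ is exactly the $(d+1)$-dimensional affine translation piece.

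The main obstacle is the ``fiber equals $G$-orbit'' step: one has to arrange $\P_0$ so that $A(\P_0)$ is connected and contains enough triangles (or analogous rigidifying cycles) to collapse all $\alpha_{ij}$ to a single scalar. For $n=2$ the argument is vacuous (only one pairwise scalar, directly identified with $\lambda$); for $n\ge 3$ and $d\ge 2$ a generic Voronoi partition of $n$ points supplies such triangles, e.g.\ around any interior Voronoi vertex. Once this rigidity is in place, combining the upper bound with the $(d+2)$-dimensional fiber at $\P_0$ gives $\dim\C_{\reg}(\R^d,n)=(d+1)(n-1)-1$, and the semialgebraicity is already assured by Tarski--Seidenberg.
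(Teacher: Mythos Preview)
The paper does not actually contain a proof of this theorem: it is stated in Section~\ref{furtherresults} with a reference to~\cite{thesis} and no argument given. So there is nothing in the paper to compare your proposal against.

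On its own merits, your plan is the standard one and is essentially correct. Parameterizing regular partitions by tuples of affine functions, quotienting by the $(d+2)$-dimensional group $G=\R_{>0}\times\R^{d+1}$ of simultaneous positive rescalings and additions of a common affine function, and invoking Tarski--Seidenberg for semialgebraicity is exactly how this computation is usually done. The upper bound $\dim\C_{\reg}(\R^d,n)\le n(d+1)-(d+2)$ follows from the free $G$-action together with the semialgebraic fiber-dimension inequality (e.g.\ \cite[Thm.~9.3.2]{bochnakCosteRoy}).

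For the lower bound you have correctly identified the only real issue: showing that at some $\P_0$ the fiber $\Phi^{-1}(\P_0)$ is \emph{exactly} a $G$-orbit. Two points are worth tightening. First, your ``triangle-compatibility'' step needs not just that $A(\P_0)$ contains triangles, but that the graph whose vertices are the edges of $A(\P_0)$ and whose edges link two adjacencies lying in a common triangle of $A(\P_0)$ is connected; for a generic Voronoi diagram this follows because $A(\P_0)$ is the $1$-skeleton of the Delaunay triangulation, every Delaunay edge lies in at least one Delaunay simplex, and the dual graph of the Delaunay triangulation is connected. Second, in each triangle you need $f_i-f_j$ and $f_j-f_k$ to be linearly independent as affine functions in order to conclude $\alpha_{ij}=\alpha_{jk}=\alpha_{ik}$; this holds whenever the three separating hyperplanes are distinct, which is automatic at a simple Voronoi vertex. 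Once the fiber at $\P_0$ is a single $G$-orbit, the matching lower bound follows from the same semialgebraic dimension formula applied on a neighbourhood where the fiber dimension is constant.
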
  

For $d\geq 3$, however, a theorem by Whiteley \cite{Whiteley3}, 
generalized by Rybnikov \cite{rybnikovStresses}, shows that simple $n$-partitions are regular.
\begin{conjecture} 
$\dim \C(\R^{3},n)=\dim \C_{\reg}(\R^{3},n)=4n-5$. 
\end{conjecture}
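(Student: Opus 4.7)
The equality $\dim \C_{\reg}(\R^{3},n)=4n-5$ is immediate from Theorem~\ref{regulardimension} with $d=3$, and the lower bound $\dim \C(\R^{3},n)\ge 4n-5$ then follows from the inclusion $\C_{\reg}(\R^{3},n)\subseteq \C(\R^{3},n)$. The substantive content of the conjecture is the matching upper bound $\dim \C(\R^{3},n)\le 4n-5$, and the plan below concerns only that direction.

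My plan is to bound each realization space separately. By Theorem~\ref{semialgebraic} the space $\C(\R^{3},n)$ decomposes as a finite union of semialgebraic realization spaces $\C_{\P}(\R^{3},n)$ indexed by combinatorial types, so it suffices to show $\dim \C_{\P}(\R^{3},n)\le 4n-5$ for every $\P$. I would split according to whether $\P$ is \emph{simple} (every interior vertex meets exactly four regions, every interior edge exactly three). If $\P$ is simple, the Whiteley--Rybnikov theorem cited just above the conjecture applies directly: $\P$ is then regular, so $\C_{\P}(\R^{3},n)\subseteq \C_{\reg}(\R^{3},n)$ and its dimension is bounded by $4n-5$ by Theorem~\ref{regulardimension}.

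For non-simple $\P$ the strategy is a parameter count using the hyperplane description $\HH_{A(\P)}(\R^{3},n)$ from Theorem~\ref{mainsemialgebraictheorem}. Each oriented hyperplane in $\R^{3}$ contributes three real parameters, and on a simple stratum these are subject exactly to the Whiteley--Rybnikov relations, giving dimension $4n-5$ after projecting by $\pi$. A non-simple feature of $\P$ --- a vertex of valence $\ge 5$, an edge of valence $\ge 4$, or a $\pi$-angle in the sense of Definition~\ref{piangle} --- forces an extra coincidence: in $\R^{3}$ four generic facet planes already meet in a single point, so each additional plane required to pass through that point imposes one independent polynomial equation on the $\c_{ij}$-coordinates, and analogously for the edge and $\pi$-angle cases. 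Therefore the realization space of a non-simple type sits, via $\pi$, inside a proper algebraic subvariety of some simple (hence regular) realization space, and the dimension drops strictly. Concretely, I would execute the count on the cell complex $\P_{N}$ of Definition~\ref{cwrefinement} through Proposition~\ref{PNspace} and Theorem~\ref{descriptionsemialgebraic}: list nodes, subtract equations of types (iii) and (iv), and compare the bookkeeping against the simple case via Euler-type relations on $\P_{N}$.

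The main obstacle is to make the ``one coincidence $=$ one equation'' accounting rigorous, i.e.\ to show that the polynomial equations imposed by the various non-simple features of $\P$ are algebraically independent, so that each non-simple type truly loses dimension rather than trading constraints. This is also the real reason the statement is still a conjecture: the qualitative Whiteley--Rybnikov rigidity of simple 3-partitions is what one needs, but its use here requires a stratum-by-stratum version controlling \emph{how} the realization space of a non-simple partition degenerates inside a simple regular stratum, and this quantitative rigidity does not follow directly from the cited papers. Establishing it --- presumably by a combinatorial induction on the number of non-simple features, with a local-to-global argument using the structure of $\F^{H}(\P)$ --- is the key step; once that is in place, combining it with the two observations above yields $\dim \C_{\P}(\R^{3},n)\le 4n-5$ for every combinatorial type, and hence the conjecture.
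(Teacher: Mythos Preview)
The statement is labeled a \emph{conjecture} in the paper and no proof is given there, so there is no paper argument to compare against; your proposal is a plan of attack on an open problem. The split into the lower bound (immediate from Theorem~\ref{regulardimension}) and the upper bound (substantive) is correct, and handling simple combinatorial types via Whiteley--Rybnikov is exactly the right observation.

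The genuine gap is in your treatment of non-simple types. You assert that for a non-simple combinatorial type the realization space ``sits \ldots\ inside a proper algebraic subvariety of some simple (hence regular) realization space, and the dimension drops strictly.'' This is false, and the paper says so in the sentence immediately following the conjecture: there exist non-simple combinatorial types whose realization spaces have dimension exactly $4n-5$ and whose partitions are generically \emph{non}-regular. Such a realization space cannot lie inside $\C_{\reg}(\R^{3},n)$ at all, let alone inside a proper subvariety of a simple stratum. Your ``one coincidence $=$ one equation'' heuristic therefore fails not merely because algebraic independence is hard to certify, but because the bookkeeping itself is wrong: a non-simple feature imposes incidence constraints on the $\c_{ij}$, yet it can simultaneously reduce the number of distinct facet hyperplanes (edges in $A(\P)$) that must be placed, and these two effects may cancel. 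The real difficulty is thus not to show that non-simple strata \emph{lose} dimension --- they need not --- but to show that they never \emph{exceed} $4n-5$, a more delicate balance that is precisely why the statement remains open.
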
 
However, $\C_{\reg}(\R^{3},n)$ is not a dense subset in $\C(\R^{3},n)$ for $n>3$, and there are also non-simple combinatorial types whose realization spaces have the same dimension as $\C_{\reg}(\R^{3},n)$, where partitions are generically non-regular. 

In general, realization spaces of partitions of a given combinatorial type are expected to be  complicated objects. We relate this to the work by Richter-Gebert \cite{Rich4} on realization spaces of polytopes, where the main result is the Universality Theorem, showing that realization spaces of $d$-dimensional polytopes for $d\ge 4$ can be ``as complicated as possible'' as semialgebraic sets. A similar result is established for realization spaces of regular partitions \cite[Theorem 5.17]{thesis}.

\begin{theorem}\label{universal}
    For any primary basic semialgebraic set $X$ and $d\ge3$, there is an $n$-partition $\P$ of $\R^d$ such that the set of regular partitions combinatorially equivalent to $\P$, up to affine equivalence, form a semialgebraic set stably equivalent to $X$. 
\end{theorem}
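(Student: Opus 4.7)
The plan is to reduce the statement to Richter-Gebert's Universality Theorem for $4$-polytopes \cite{Rich4} by exploiting the well-known bijection between regular polyhedral subdivisions of $\R^d$ and certain polyhedra in $\R^{d+1}$. Specifically, a regular $n$-partition of $\R^d$ arises by lifting $n$ sites to $\R^{d+1}$ (assigning heights) and projecting down the facets of the upper envelope of their weighted convex hull; equivalently, by projecting the lower facets of an unbounded polyhedron in $\R^{d+1}$ with exactly $n$ facets that are not vertical.

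First, given a primary basic semialgebraic set $X$, I would apply Richter-Gebert's theorem to produce a $4$-polytope $P_0$ whose affine realization space is stably equivalent to $X$. For $d>3$, I would then iterate a suitable prism or pyramid construction to obtain a $(d+1)$-polytope $P$ whose affine realization space is still stably equivalent to $X$; such ``dimension-boosting'' constructions are standard in the universality literature and preserve stable equivalence.

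Next, I would convert $P$ into a regular partition: pick a facet $F_\infty$ of $P$, perform a projective transformation sending $F_\infty$ to the hyperplane at infinity, so that $P$ becomes an unbounded polyhedron $\widetilde P \subset \R^{d+1}$ whose $n$ remaining facets project injectively onto $\R^d$ under the map that forgets the last coordinate. The images of the interiors of these facets form a regular $n$-partition $\P$ of $\R^d$, where $n$ is the number of facets of $P$ other than $F_\infty$. The combinatorial type of $\P$ (in the sense of Definition~\ref{combinatorialtype}) is determined by the combinatorial type of $P$ together with the choice of $F_\infty$.

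The key step is to show that the realization space $\C_{\P,\reg}(\R^d,n)/\!\sim_{\text{aff}}$ of regular partitions combinatorially equivalent to $\P$ modulo affine equivalence is stably equivalent to the realization space of $P$ modulo affine equivalence of polytopes. One direction is easy: every realization $P'$ of the combinatorial type of $P$ with $F_\infty$ sent to infinity yields, by the same projection, a regular partition combinatorially equivalent to $\P$, and this correspondence is a polynomial map between the parameter spaces. In the other direction, from any regular partition $\P'$ combinatorially equivalent to $\P$ one recovers a lifted polyhedron whose projective closure is a polytope combinatorially equivalent to $P$; this inverse map is also semialgebraic and polynomial, provided the heights/weights parameterizing the regular partition are chosen as coordinates. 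The affine equivalences on the two sides match up: affine transformations of $\R^d$ correspond to the subgroup of projective transformations of $\R^{d+1}$ fixing $F_\infty$, and this reduction to a quotient by a matching group action is known to preserve stable equivalence.

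The main obstacle I expect is bookkeeping the equivalence relations cleanly: verifying that the notion of ``combinatorially equivalent partitions'' (which must match face posets, half-linear faces and orientations as in Definition~\ref{combinatorialtype}) on the partition side corresponds exactly to combinatorial equivalence of the lifted $(d+1)$-polytope with a distinguished facet, and that quotienting by affine equivalence of partitions corresponds to quotienting by the stabilizer of $F_\infty$ in the affine group of $\R^{d+1}$. Once this dictionary is set up, stable equivalence propagates through the construction and yields the statement; the details are worked out in \cite[Theorem~5.17]{thesis}.
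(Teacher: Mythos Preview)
The paper does not actually contain a proof of this theorem: it is stated in Section~\ref{furtherresults} as a ``further result'' and the reader is referred to \cite[Theorem~5.17]{thesis} for the argument. So there is no in-paper proof to compare your proposal against.

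That said, your outline is consistent with what the surrounding text signals the thesis does. The paragraph preceding the theorem explicitly invokes Richter-Gebert's Universality Theorem \cite{Rich4} for $4$-polytopes as the intended input, and the correspondence you describe---regular $n$-partitions of $\R^d$ as vertical projections of the lower boundary of an unbounded simple polyhedron in $\R^{d+1}$, obtained from a $(d{+}1)$-polytope by sending a distinguished facet $F_\infty$ to infinity---is precisely the mechanism by which regular partitions are parameterized (cf.\ Theorem~\ref{regulardimension}). Your dimension-boosting step via prisms/pyramids for $d>3$ is also standard and is the expected route.

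Two points where your sketch would need real work to become a proof, and which you correctly flag as bookkeeping: first, you must verify that the notion of combinatorial type in Definition~\ref{combinatorialtype} (face poset together with half-linear faces and orientation) on the partition side matches combinatorial equivalence of the pair $(P,F_\infty)$ on the polytope side; second, you must check that modding out by affine equivalence of $\R^d$ on one side corresponds, up to stable equivalence, to modding out by the appropriate subgroup of projective transformations of $\R^{d+1}$ fixing $F_\infty$ on the other. Neither is automatic, but both are the kind of thing the thesis reference is there to supply.
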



\begin{thebibliography}{10}\itemsep=-0pt

        \bibitem{BaranyBlagojevicSzucs}
        Imre B\'ar\'any, Pavle V.~M. Blagojevi\'c, and Andr\'as Sz\H{u}cs.
        Equipartitioning by a convex $3$-fan.
        \emph{Advances in Math.}, 223:579--593, 2010.

        \bibitem{basu}
        Saugata Basu, Richard Pollack, and Marie-Fran\oldc{c}oise Roy.
        \emph{Algorithms in Real Algebraic Geometry}, 
          Algorithms and Computation in Mathematics, vol.~10,
        Springer-Verlag, Berlin, second ed., 2006.

        \bibitem{BlagojevicZieglerEquipartitions}
        Pavle~V.~M. {Blagojevi{\'c}} and Günter~M. {Ziegler}.
        Convex equipartitions via equivariant obstruction theory.
        \emph{Israel J. Mathematics}, 200:49--77, 2014.

        \bibitem{bochnakCosteRoy}
        Jacek Bochnak, Michel Coste, and Marie~Fran\oldc{c}oise Roy.
        \emph{G\'eom\'etrie alg\'ebrique r\'eelle}, 
          Ergebnisse der Mathematik und ihrer Grenzgebiete (3), vol.~12,
        Springer-Verlag, Berlin, 1987.

        \bibitem{firlaZiegler}
        Robert~T. Firla and G{\"u}nter~M. Ziegler.  
        Hilbert bases, unimodular triangulations, and binary covers of
          rational polyhedral cones.
        \emph{Discrete Comput. Geometry}, 21:205--216, 1999.

        \bibitem{GruberKenderov}
        Peter~M. Gruber and Petar Kenderov.
        Approximation of convex bodies by polytopes.
        \emph{Rend. Circ. Mat. Palermo (2)}, 31:195--225, 1982.

        \bibitem{KarasevHubardAronov:equipartion}
        Roman~N. Karasev, Alfredo Hubard, and Boris Aronov.
        Convex equipartitions: the spicy chicken theorem.
        \emph{Geometriae Dedicata}, 170:263--279, 2014.

        \bibitem{thesis}
        Emerson León.
        \emph{Spaces of convex $n$-partitions}.
        PhD thesis, vi+101 pages, 
		   Freie Universität Berlin, 2015, \href{http://www.diss.fu-berlin.de/diss/receive/FUDISS_thesis_000000098945}
       {\url{http://www.diss.fu-berlin.de/diss/receive/FUDISS_thesis_000000098945}}.

        \bibitem{Munkres1}
        James~R. Munkres.
        \emph{Topology. A First Course}.
        Prentice-Hall, Englewood Cliffs, NJ, second ed., 2000.

        \bibitem{Nandakumar06}
        R.~Nandakumar.
        ``{F}air'' partitions.
        Blog entry,
          \url{http://nandacumar.blogspot.de/2006/09/cutting-shapes.html}, September
          28, 2006.

        \bibitem{NandakumarRamanaRao12}
        R.~Nandakumar and N.~{Ramana Rao}.
        `{F}air' partitions of polygons: {A}n elementary introduction.
        \emph{Proc.\ Indian Academy of Sciences -- Mathematical Sciences},
          122:459--467, 2012.

        \bibitem{Rich4}
        J\"urgen Richter-Gebert.
        \emph{Realization Spaces of Polytopes}, 
		  Lecture Notes in Mathematics, vol.~1643,
        Springer-Verlag, Berlin Heidelberg, 1996.

        \bibitem{rudin1991functional}
        Walter Rudin.
        \emph{Functional {A}nalysis}.
        International series in pure and applied mathematics. McGraw-Hill,
          1991.

        \bibitem{rybnikovStresses}
        Konstantin Rybnikov.
        Stresses and liftings of cell-complexes.
        \emph{Discrete Comput. Geometry}, 21:481--517, 1999.

        \bibitem{stanley}
        Richard~P. Stanley.
        \emph{Enumerative Combinatorics, Vol.~I},  
        Cambridge University Press, second ed., 1997.

        \bibitem{Whiteley3}
        Walter Whiteley.
        $3$-diagrams and {S}chlegel diagrams of simple $4$-polytopes.
        Unpublished preprint 1994.

        \bibitem{Z}
        G\"unter~M. Ziegler.
        \emph{Lectures on {P}olytopes},  
		  Graduate Texts in Math., vol.~152,
        Springer-Verlag, New York, 1995; seventh updated printing 2007.
        \end{thebibliography}
\end{document}